\theoremstyle{plain}
\newtheorem{theorem}{Theorem}[section]
\newtheorem{lemma}[theorem]{Lemma}
\newtheorem*{Oliveira}{Oliveira's inequality}
\newtheorem*{Lecue}{Lecué and Mendelson's inequality}
\theoremstyle{definition}
\newtheorem{definition}[theorem]{Definition}
\theoremstyle{remark}
\newtheoremstyle{named}{}{}{\itshape}{}{\bfseries}{.}{.5em}{\thmnote{#3}#1}
\theoremstyle{named}
\def\floor#1{\left\lfloor #1 \right\rfloor}
\def\ceil#1{\left\lceil #1 \right\rceil}
\def\norm#1{\left\| #1 \right\|}
\def\Tr#1{\mathrm{Tr}\left( #1 \right)}
\def\IncompA#1{\mathrm{Incomp}_A\left(#1\right) }
\def\CompA#1{ \mathrm{Comp}_A \left( #1 \right) }
\title{Matrix anti-concentration inequalities with applications}
\author{Zipei Nie \thanks{Lagrange Mathematics and Computing Research Center, Huawei. Email: niezipei@huawei.com.}}
\begin{document}

	\maketitle	
	\begin{abstract}
		We provide a polynomial lower bound on the minimum singular value of an $m\times m$ random matrix $M$ with jointly Gaussian entries, under a polynomial bound on the matrix norm and a global small-ball probability bound $$\inf_{x,y\in S^{m-1}}\mathbb{P}\left(\left|x^* M y\right|>m^{-O(1)}\right)\ge \frac{1}{2}.$$ With the additional assumption that $M$ is self-adjoint, the global small-ball probability bound can be replaced by a weaker version. 
		
		We establish two matrix anti-concentration inequalities, which lower bound the minimum singular values of the sum of independent positive semidefinite self-adjoint matrices and the linear combination of independent random matrices with independent Gaussian coefficients. Both are under a global small-ball probability assumption. As a major application, we prove a better singular value bound for the Krylov space matrix, which leads to a faster and simpler algorithm for solving sparse linear systems. Our algorithm runs in $\tilde{O}\left(n^{\frac{3\omega-4}{\omega-1}}\right)=O(n^{2.2716})$ time where $\omega<2.37286$ is the matrix multiplication exponent, improving on the previous fastest one in $\tilde{O}\left(n^{\frac{5\omega-4}{\omega+1}}\right)=O(n^{2.33165})$ time by Peng and Vempala.
	\end{abstract}
	\section{Introduction}
	The study of extreme singular values of random matrices is a central topic in the non-asymptotic theory \cite{Rudelson2} of random matrices. In this paper we establish several new estimates of the minimum singular values of random matrices, with applications in compressed sensing, sparse linear system solving and sparse linear regression. Throughout the paper, let $\sigma_1(M)\ge \cdots\ge \sigma_{m}(M)$ denote all singular values of an $m\times m$ matrix $M$.
	
	\subsection{Minimum singular value of matrix with independent rows}
	Since its inception, random matrix theory  primarily dealt with matrices with mostly independent entries. In the simplest case where an $m\times m$ matrix $M$ has independent standard normal entries, Edelman \cite{Edelman} (see also \cite{Szarek} for a related work) proved that $$\mathbb{P}\left(\sigma_{m}\left(M\right)\le \varepsilon m^{-\frac{1}{2}}\right)\le \varepsilon$$
	for every $\varepsilon\ge 0$. Motivated by the interest in Bernoulli matrices, researchers considered this problem in more general settings. The first polynomial bound on the minimum singular value of any matrix with i.i.d. subgaussian entries was found by Rudelson \cite{Rudelson}, which only works for probability bound $\varepsilon\ge \Omega\left(m^{-\frac{1}{2}}\right)$. By the inverse Littlewood-Offord method introduced by Tao and Vu \cite{Tao3}, Rudelson and Vershynin \cite{Rudelson3} built an improved estimate for every $\varepsilon\ge 0$ up to a constant factor and an exponential small term. After a series of improvements \cite{Livshyts2,Rebrova,Rudelson4,Vershynin}, Livshyts, Tikhomirov and Vershynin proved that \cite{Livshyts}, for a random matrix $M$ with independent entries under a matrix norm bound and a uniform anti-concentration condition, there exists $c>0$ such that $$\mathbb{P}\left(\sigma_{m}\left(M\right)\le \varepsilon m^{-\frac{1}{2}}\right)\le c\varepsilon +\exp\left(-\frac{m}{c}\right)$$
	for every $\varepsilon\ge 0$.

	A slightly more general model assumes that an $n\times m$ ($n\ge m$) random matrix $X$ has independent rows. The minimum singular value $\sigma_{m}\left(X\right)$ of $X$ is the square root of the minimum singular value of the Gram matrix $X^T X$. When $X$ has i.i.d. rows $v_1^T, \ldots, v_n^T$ distributed as $v^T$, then $$\frac{1}{n} X^T X=\frac{1}{n} \sum_{i=1}^n v_i v_i^T$$ is the sample covariance matrix. This line of research (see \cite{Chafai,Koltchinskii2, Koltchinskii, Lecue, Srivastava, Tikhomirov,Oliveira, Yaskov1, Yaskov2, Yaskov3} for example) is motivated by the studies of random tensors, compressed sensing and high-dimensional statistics. Much work, such as the matrix Chernoff inequality \cite{Tropp} (see also \cite{Tropp2}), has been devoted to proving that $X^T X$ is close to $\mathbb{E}\left(X^T X\right)$ when $n$ is large, which implies bounds on $\sigma_1(X)$ and $\sigma_{m}(X)$ simultaneously. Koltchinskii and Mendelson \cite{Koltchinskii} pointed out that the roles of $\sigma_1(X)$ and $\sigma_{m}(X)$ are very different, and a bound on $\sigma_{m}(X)$ can be obtained without the concentration of $X^T X$. The following one-sided bound for sample covariance matrix was proved by Oliveira. 
	
	\begin{Oliveira}\cite{Oliveira}
		Let $M_1,\ldots, M_n$ be independent copies of an $m\times m$ positive semidefinite real symmetric random matrix $M$ with mean $\mathbb{E}\left(M\right)=\Sigma$. Suppose that $h$ is a positive real number such that $$\sqrt{\mathbb{E}\left(\left(x^T M x\right)^2\right)}\le h \cdot  \mathbb{E}\left(x^T M x\right)$$
		for every vector $x\in \mathbb{R}^m$. Then 
		$$\mathbb{P}\left(\sigma_{m}\left(\frac{1}{n}\sum_{i=1}^n \Sigma^{-\frac{1}{2}} M_i \Sigma^{-\frac{1}{2}}\right)<1-7h \sqrt{\frac{m+2\log \frac{2}{\delta}}{n}}   \right)\le \delta$$
		for every $0<\delta\le 1$. Especially, suppose that $$n\ge 1000 h^2 m,$$
		then we have $$\mathbb{P}\left(\sigma_{m}\left(\frac{1}{n}\sum_{i=1}^n \Sigma^{-\frac{1}{2}} M_i \Sigma^{-\frac{1}{2}}\right)\le \frac{1}{2}  \right)\le \exp\left(-\frac{n}{1000h^2}\right).$$
	\end{Oliveira}

	As a matter of fact, Oliveira's inequality is an anti-concentration result for general positive semidefinite real symmetric random matrices, with $M_i= v_i v_i^T$ being a particular case. It is worth noting that Zhivotovskiy \cite{Zhivotovskiy} recently developed a dimension-free version.
	
	The small-ball condition was introduced in \cite{Koltchinskii} as a reasonable assumption when bounding the minimum singular value of $X$. Lecué and Mendelson obtained the following bound under the weakest condition so far.
	
	\begin{Lecue} \cite{Lecue}
		There exists an absolute constant $c>0$ such that the following statement holds.
		
		Let $v_1,\ldots, v_n$ be independent copies of a random vector $v \in \mathbb{R}^m$. Suppose that $\alpha\ge 0$ and $0<\beta\le 1$ are two real numbers, such that the small-ball probability
		$$\mathbb{P}\left(\left|v^T x\right|>\alpha^{\frac{1}{2}} \right)\ge \beta$$
		holds for any $x$ in $S^{m-1}$. Suppose that $$n\ge \frac{cm}{\beta^2}.$$
		Then we have $$\mathbb{P}\left(\sigma_m\left(\frac{1}{n}\sum_{i=1}^m v_i v_i^T\right)\le \frac{\alpha\beta}{2}\right)\le \exp\left(-\frac{n\beta^2}{c}\right).$$
	\end{Lecue}

	We compare the above inequality with Oliveira's as follows. It follows from the Paley-Zygmund inequality that the assumption
	$$\sqrt{\mathbb{E}\left(\left(x^T M x\right)^2\right)}\le h \cdot  \mathbb{E}\left(x^T M x\right)$$
	implies 
	\begin{align*}
		&\inf_{x\in S^{m-1}}\mathbb{P}\left(x^T  \Sigma^{-\frac{1}{2}} M \Sigma^{-\frac{1}{2}} x>\frac{1}{2} \right)\\
		=&\inf_{x\in S^{m-1}}\mathbb{P}\left(x^T M x>\frac{1}{2} x^T \Sigma x\right)\\
		\ge &\inf_{x\in S^{m-1}} \frac{\left(\mathbb{E}\left(x^T M x\right)\right)^2}{4\cdot \mathbb{E}\left(\left(x^T M x\right)^2\right)}\\
		\ge& \frac{1}{4h^2}.
	\end{align*}
	Therefore when $\alpha, \beta$ and $h$ are treated as constants, the conclusions are equivalent up to constant factors. However, it is sometimes necessary (for example, when $X$ is sparse) to study the dependency on the parameter $\beta =\frac{1}{4h^2}$. For Oliveira's inequality, the dependency on $h$ is optimal. In contrast, Lecué and Mendelson's inequality has suboptimal dependency on $\beta$. Now we state our first main result.
	
	\begin{theorem} \label{main-1}
		There exists an absolute constant $c_{1.1}>0$ such that the following statement holds.
		
		Let $M_1,\ldots, M_n$ be independent $m\times m$ positive semidefinite real symmetric (resp. self-adjoint) random matrices. Suppose that $\alpha \ge 0$ and $0<\beta \le 1$ are two real numbers, such that the small-ball probability bound
		$$\sum_{i=1}^n \mathbb{P}\left(x^* M_i x>\alpha \right)\ge n\beta$$
		holds for any unit column vector $x$ in $\mathbb{R}^m$ (resp. in $\mathbb{C}^m$). Suppose that 
		$$n\ge \frac{c_{1.1}m}{\beta}\log\frac{2}{\beta}.$$
		Then we have
		$$\mathbb{P}\left(\sigma_m\left(\frac{1}{n}\sum_{i=1}^n M_i\right)\le \frac{\alpha\beta}{2} \right)\le \exp\left(-\frac{n\beta}{c_{1.1}}\right).$$
	\end{theorem}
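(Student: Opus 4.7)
The plan is to reduce the lower bound on $\sigma_m\bigl(\tfrac{1}{n}\sum_{i=1}^n M_i\bigr)$ to a uniform lower bound on the counting statistic
$$N(x) := \sum_{i=1}^n \mathbf{1}\{x^* M_i x > \alpha\},$$
using the positive semidefiniteness inequality $x^*\bigl(\sum_{i=1}^n M_i\bigr) x \ge \alpha N(x)$. For a fixed $x \in S^{m-1}$ the indicators are independent Bernoullis with $\mathbb{E}[N(x)] \ge n\beta$, so the multiplicative Chernoff bound---sharp for sums of Bernoullis with mean of order $\beta$---yields $\mathbb{P}(N(x) \le n\beta/2) \le \exp(-n\beta/8)$, already matching the target tail decay.

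The crux is to upgrade this pointwise estimate to a uniform bound, which is obstructed because the indicator $\mathbf{1}\{x^* M_i x > \alpha\}$ is neither continuous nor Lipschitz in $x$ with any uniform constant while no operator-norm control on $M_i$ is assumed. My plan is to combine an $\epsilon$-net on $S^{m-1}$ with an adaptive spectral truncation $\hat{M}_i^{(R)} := M_i\,\mathbf{1}\{\|M_i\| \le R\}$. Since $\sum_i M_i - \sum_i \hat{M}_i^{(R)}$ is a sum of positive semidefinite matrices, positive semidefinite monotonicity gives $\lambda_{\min}(\sum_i M_i) \ge \lambda_{\min}(\sum_i \hat{M}_i^{(R)})$, so it suffices to lower-bound the right side. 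For the truncated matrices, the triangle inequality for the $M_i^{1/2}$-seminorm
$$\|M_i^{1/2} x\| \ge \|M_i^{1/2} x_0\| - \sqrt{R}\,\|x - x_0\|$$
(valid whenever $\|M_i\| \le R$) transfers the event $\{x_0^* M_i x_0 > \alpha\}$ at a net point $x_0$ to $\{x^* M_i x \ge (1-\eta)^2 \alpha\}$ at any sphere point $x$ with $\|x - x_0\| \le \eta\sqrt{\alpha/R}$, for any $\eta \in (0,1)$. Taking the net scale $\epsilon = \eta\sqrt{\alpha/R}$ produces a net of cardinality $(CR/\alpha)^{O(m)}$, so a per-level Chernoff union bound succeeds provided $m\log(R/\alpha)$ is at most a constant multiple of $n\beta$.

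Because the right truncation level $R$ is data-dependent, I would peel over a dyadic sequence $R_k = 2^k \alpha$ and take the smallest $k$ for which the empirical count $|\{i : \|M_i\| > R_k\}|$ drops below a small fraction of $n\beta$. This count concentrates around its expectation $\sum_i \mathbb{P}(\|M_i\| > R_k)$, which vanishes as $R_k \to \infty$, so only $O(\log_2(1/\beta))$ scales are active; summing per-level failure probabilities across the active scales reproduces the sample-size hypothesis $n \ge c_{1.1} m \log(2/\beta)/\beta$ and the exponential tail $\exp(-n\beta/c_{1.1})$, with the factor $\tfrac{1}{2}$ in the conclusion $\alpha\beta/2$ matched by a careful choice of $\eta$ and of the truncation fraction. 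The complex self-adjoint case is handled by the same scheme on the $(2m-1)$-real-dimensional sphere underlying the complex unit sphere, which only doubles the exponent of the net cardinality and hence contributes a constant factor to $c_{1.1}$.

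The principal technical obstacle will be the adaptive peeling itself: without any tail assumption on $\|M_i\|$, one has to verify simultaneously at every dyadic scale that both the small-ball count after truncation and the count of large-norm indices concentrate sharply enough for the overall union bound to close at the claimed rate $\exp(-n\beta/c_{1.1})$.
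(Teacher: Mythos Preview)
Your reduction to the counting statistic $N(x)=\sum_i \mathbf 1\{x^*M_ix>\alpha\}$ and the pointwise Chernoff bound are fine, and you correctly identify the real issue: upgrading to a uniform bound over $x$ when the indicators are discontinuous and $\|M_i\|$ is uncontrolled. But the truncation-plus-net scheme you propose does not close.

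The specific gap is the claim that ``only $O(\log_2(1/\beta))$ scales are active.'' Nothing in the hypotheses bounds the ratio $\|M_i\|/\alpha$, and the number of dyadic scales you need is governed by that ratio, not by $\beta$. Take the trivial example $M_i = L\alpha\cdot I$ deterministically for some huge constant $L$: the small-ball assumption holds with $\beta=1$, yet your adaptive truncation level must be $R\ge L\alpha$, so the net has cardinality $(CL)^{\Theta(m)}$ and the union bound requires $n\gtrsim m\log L$. The theorem, however, demands only $n\gtrsim m$. More generally, the expectation $\sum_i\mathbb P(\|M_i\|>R_k)$ does vanish as $k\to\infty$, but the rate is completely arbitrary, so you cannot bound the active $k$ in terms of $m,n,\beta$ alone. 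The peeling therefore cannot reproduce the stated sample-size condition or tail bound.

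The paper sidesteps the norm issue entirely by observing that the index class $\{r(x):x\in S^{m-1}\}$ with $r(x)=\{M:x^*Mx>\alpha\}$ has VC dimension $O(m)$: each $r(x)$ is cut out by the sign of a degree-$4$ real polynomial in the $O(m)$ real coordinates of $x$, so Warren's theorem bounds the number of sign patterns. One then runs a standard double-sample symmetrization plus Sauer--Shelah argument (the paper's Theorem~3.1) to get the uniform bound $N(x)\ge n\beta/2$ for all $x$ simultaneously, with exactly the sample requirement $n\gtrsim (m/\beta)\log(2/\beta)$ and failure probability $\exp(-n\beta/c)$. The point is that VC-type uniformity exploits the \emph{algebraic} structure of the sets $r(x)$ rather than any metric continuity, which is why no control on $\|M_i\|$ is needed.
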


	On the one hand, Theorem \ref{main-1} only assumes the small-ball probability bound which is weak and user-friendly, as Lecué and Mendelson's inequality does. On the other hand, it concerns the anti-concentration for general positive semidefinite self-adjoint matrices, and achieves optimal dependency on $\beta$ up to a log factor, as Oliveira's inequality does. 
	
	As direct corollaries, we can obtain parallel applications to \cite[Theorem 4.1]{Oliveira} and \cite[Theorem 5.2]{Oliveira} under weaker assumptions, in linear regression with random design and restricted eigenvalue constants respectively. Also, as an application in compressed sensing, we can improve \cite[Theorem A]{Lecue} on the number of measurements to ensure the exact reconstruction property when $\beta$ is not treated as a constant. Because these results depend on Oliveira's inequality or Lecué and Mendelson's inequality in a pretty direct way, we leave the precise statements and proofs to the interested readers.
	
	\subsection{Minimum singular value of matrix with jointly Gaussian entries}

	Bounding the smallest singular value for random matrices with non-trivial correlations among rows and columns is more challenging, even when a substantial amount of independence still exists. For example, a Rademacher random symmetric Bernoulli matrices was proved \cite{Costello} to be almost surely non-singular by Costello, Tao and Vu in 2005, while the corresponding case for random matrices with i.i.d. entries was settled \cite{Komlos} by Komlós almost 40 years prior. This non-singular probability bound leads to an exponential lower bound on the smallest singular value, while the first polynomial lower bound is obtained by Nguyen \cite{Nguyen} and Vershynin \cite{Vershynin} at the same time. 
	
	Two kinds of random matrices were intensively studied in the literature. For the first kind, the dependency among rows or columns is limited, so that the tools developed in the research of random matrices with i.i.d. entries can still be applied. Such examples include random symmetric matrices, adjacency matrices of random $d$-regular graphs, and matrices for which the correlation between two entries decays exponentially with respect to the distance between the entry positions. For the second kind, the matrices are highly structured. Such examples include random Toeplitz and Hankel matrices, and Haar distributed matrices such as circular ensembles.
	
	Instead, we focus on the key challenge of bounding the minimum singular value of a general $m\times m$ random matrix $M$ without the limited dependency condition or rich structures. We consider the simplest case where $M$ has jointly Gaussian entries. Such a matrix can be represented as $$M=M_0+\sum_{i=1}^{m^2} g_i M_i,$$
	where $M_0,M_1,\ldots, M_{m^2}$ are deterministic matrices and $g_1,\ldots, g_{m^2}$ are real independent standard normal random variables. In order to ensure that $M$ is almost surely non-singular, we need a condition to exclude the possibility that any linear combination of $M_0, M_1,\ldots, M_{m^2}$ is singular. The study of linear space of singular matrices \cite{Atkinson, Beasley,Dieu,Flanders,Seguins1,Seguins2} originates from Dieudonné's work \cite{Dieu}. The following theorem can be deduced from \cite[Lemma 1]{Flanders} by Flanders. Note that this result is rediscovered in a recent work \cite[Lemma 4.7]{Guo} by Guo, Oh, Zhang and Zorin-Kranich in harmonic analysis.
	
	\begin{theorem}\label{intro-non-singular-1}
		Let $S$ be a linear space of $m\times m$ real matrices. Suppose that there exists $M\in S$ with $x^* M y\neq 0$ for any $x,y\in S^{m-1}$. Then there exists $M\in S$ with $\det M\neq 0$.
	\end{theorem}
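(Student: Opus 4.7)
The plan is to argue by contrapositive: I assume every $M \in S$ is singular and aim to produce unit vectors $x_0, y_0 \in \mathbb{R}^m$ with $x_0^* M y_0 = 0$ for every $M \in S$, directly contradicting the hypothesis applied to $(x_0, y_0)$. The main tool is the classical rank-perturbation principle: a matrix of maximal rank in $S$ traps the rest of $S$ along any of its kernel directions.

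First I would set $r = \max_{M \in S} \mathrm{rank}(M) < m$, fix $M_0 \in S$ attaining this maximum, and pick a unit vector $y_0 \in \ker M_0$. The key intermediate step is to show that $M' y_0 \in \mathrm{im}(M_0)$ for every $M' \in S$. If this failed for some $M'$, the pencil $M(t) = M_0 + t M' \in S$ would have rank at least $r+1$ for generic $t$: choosing a basis $v_1, \dots, v_r$ of a complement of $\ker M_0$ and working in an adapted codomain basis where $M_0 v_i = e_i$ and $M' y_0$ has a nonzero component along $e_{r+1}$, a short Schur-complement computation shows that the top $(r+1)\times(r+1)$ minor of the column block $(M(t) v_1, \dots, M(t) v_r, M(t) y_0)$ equals $t \cdot (M' y_0)_{r+1} + O(t^2)$, which is nonzero for small $t \neq 0$. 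This contradicts the maximality of $r$, proving the trapping statement.

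Given the trapping statement, the conclusion is immediate. The linear map $\phi : S \to \mathbb{R}^m$, $\phi(M) = M y_0$, lands inside the proper subspace $\mathrm{im}(M_0)$, so any unit vector $x_0$ orthogonal to $\mathrm{im}(M_0)$ satisfies $x_0^* M y_0 = 0$ for every $M \in S$. This contradicts the hypothesis that for every pair of unit vectors $(x,y)$ some $M \in S$ gives $x^* M y \neq 0$. Hence $r = m$ and $S$ contains a nonsingular matrix.

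The main obstacle will be the rank-perturbation step: one must check carefully that $\mathrm{rank}(M_0 + tM')$ really does exceed $r$ when $M' y_0$ escapes $\mathrm{im}(M_0)$, since the candidate extra column $M(t) y_0 = t M' y_0$ itself vanishes at $t=0$. Once the minor calculation is in hand, the production of the rank-one obstruction $(x_0, y_0)$ is a short piece of linear algebra.
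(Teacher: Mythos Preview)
Your proposal is correct and follows essentially the same approach as the paper. Both arguments take a matrix $M_0\in S$ of maximal rank $r$, pick a vector $y_0$ in (or orthogonal to a complement of) its kernel and a vector $x_0$ orthogonal to its image, and show that if $x_0^*M'y_0\neq 0$ for some $M'\in S$ then a suitable member of the pencil $\{M_0+tM'\}$ has rank at least $r+1$; the only cosmetic differences are that you phrase it as a contrapositive and detect the rank jump via a small-$t$ minor expansion, whereas the paper argues by direct contradiction and reads off the leading coefficient of the polynomial $\det\bigl(U'^*(tM_0-M')V'\bigr)$.
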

	
	For self-adjoint matrices, we relax our condition as follows.
	
	\begin{theorem}\label{intro-non-singular-2}
		Let $S$ be a linear space of $m\times m$ real symmetric matrices. Suppose that there exists $M\in S$ with $x^* M x\neq 0$ for any $x\in S^{m-1}$. Then there exists $M\in S$ with $\det M\neq 0$.
	\end{theorem}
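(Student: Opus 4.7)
I would prove the statement by induction on $m$. The base case $m = 1$ is immediate: the hypothesis reduces to $S \neq \{0\}$, and any non-zero $1\times 1$ matrix is non-singular. For the inductive step, let $M_0 \in S$ be a matrix of maximum rank $r$, and suppose for contradiction $r < m$. Observe that $r \geq 1$, since otherwise $S = \{0\}$ contradicts the hypothesis. Set $K = \ker M_0$, of dimension $m - r \geq 1$. Since $M_0$ is symmetric, its range equals $K^\perp$, so in the block decomposition induced by $\mathbb{R}^m = K^\perp \oplus K$ one has
\[
M_0 = \begin{pmatrix} A & 0 \\ 0 & 0 \end{pmatrix}, \qquad M = \begin{pmatrix} B(M) & C(M) \\ C(M)^T & D(M) \end{pmatrix} \text{ for each } M\in S,
\]
with $A$ an invertible symmetric $r\times r$ matrix and $D(M)$ a symmetric $(m-r)\times(m-r)$ matrix depending linearly on $M$.

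The key observation is that the linear map $\Psi\colon S \to \mathrm{Sym}(K)$ sending $M$ to $D(M)$ produces a space $\Psi(S)$ that satisfies the theorem's hypothesis in dimension $m-r$: for any unit $y \in K$, the vector $(0,y)^T$ is a unit vector in $\mathbb{R}^m$, so the original hypothesis yields some $M \in S$ with $y^T D(M) y \neq 0$. The inductive hypothesis then produces $M^* \in S$ with $D(M^*)$ invertible.

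To finish, I would perturb: consider $M_0 + t M^*$ and expand its determinant via the Schur complement with respect to the upper-left block $A + tB(M^*)$, which remains invertible for small $t$. The leading-order behavior is $\det(M_0 + tM^*) = t^{m-r} \det(A)\det(D(M^*))(1+o(1))$ as $t \to 0$, which is non-zero for all sufficiently small $t \neq 0$. This exhibits a rank-$m$ matrix in $S$, contradicting the maximality of $r$ and completing the induction. The conceptual crux is that the symmetry of $M_0$ forces it to act trivially both on $K$ and between $K$ and $K^\perp$, cleanly presenting a smaller instance of the problem; the Schur complement step is the routine calculation needed to lift the inductive solution back to dimension $m$.
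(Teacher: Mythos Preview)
Your proof is correct, but it takes a genuinely different route from the paper's.

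The paper proves this as a special case of a unified statement (its Theorem~2.1) covering real, complex, real symmetric, and self-adjoint matrices simultaneously, and it does so \emph{without} induction on $m$. Like you, it picks a maximum-rank element $M$ of rank $k$ and assumes $k<m$; but then it selects a \emph{single} unit vector $x$ orthogonal to the relevant $k$-dimensional subspace, invokes the hypothesis once to obtain $M'\in S$ with $x^{T}M'x\neq 0$, and shows directly via a determinant-as-polynomial argument on a $(k+1)\times(k+1)$ compression that some $tM-M'$ has rank at least $k+1$. One rank increment suffices for the contradiction.

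Your argument instead projects the entire problem onto $K=\ker M_0$, applies the inductive hypothesis in dimension $m-r$ to locate $M^{*}$ with $D(M^{*})$ invertible, and then jumps from rank $r$ all the way to rank $m$ via the Schur complement expansion. The induction is genuinely needed in your setup: knowing only that each $y\in K$ is detected by \emph{some} $D(M)$ does not by itself give a single $M^{*}$ with $D(M^{*})$ nonsingular.

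What each approach buys: the paper's one-step rank increment is shorter, avoids induction, and works uniformly for the non-symmetric and affine-space variants. Your approach is arguably more transparent in the symmetric case---the block form $M_0=\mathrm{diag}(A,0)$ cleanly isolates a smaller instance---but it leans on symmetry (so that range and kernel are orthogonal complements) and on $S$ being linear rather than merely affine, so it does not extend as readily to the other cases the paper treats.
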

	
	The corresponding statements for complex matrices also hold. 
	
	The interested readers can work out quantitative versions of Theorem \ref{intro-non-singular-1} and Theorem \ref{intro-non-singular-2}. The determinant argument leads to an exponential bound on $\sigma_m(M)$ assuming the global small-ball probability bound defined in the following way. 
 
	\begin{definition}\label{def-xy}
		For an integer $m$ and a matrix type $T$ which is either real, complex, real symmetric or self-adjoint, we define $S_{m,T}$ as the set of the pairs $(x,y)$ of unit $m$-dimensional unit column vectors with the following possible restrictions:
		\begin{enumerate}[label=(\arabic*)] 
			\item if $T$ is real or real symmetric, let $x$ and $y$ be real;
			\item if $T$ is real symmetric or self-adjoint, let $x=y$.
		\end{enumerate}
	\end{definition}
	
	\begin{definition}\label{def-gs}
		For an $m\times m$ random matrix $M$, a non-negative real number $\alpha$ and a matrix type $T$ which is either real, complex, real symmetric or self-adjoint, we define the global small-ball probability as $$P_{\alpha,T}(M):=\inf_{x,y}\mathbb{P}\left(\left|x^* M y\right|> \alpha \right),$$
		where $(x,y)$ ranges over $S_{m,T}$.
	\end{definition}

	Our second main result is a polynomial bound on the minimum singular value $\sigma_m(M)$ of an $m\times m$ random matrix $M$ with jointly Gaussian entries under a polynomial bound on the matrix norm and a global small-ball probability condition.
	
	\begin{theorem}\label{main-2}
		There exists an absolute constant $c_{1.6}>0$ such that the following statement holds.
		
		Let $M$ be an $m\times m$ random matrix with jointly Gaussian entries. Assume the global small-ball probability bound $P_{\alpha,T}(M)\ge \frac{1}{2}$, where $\alpha\ge 0$ and $T$ is the matrix type (real, complex, real symmetric or self-adjoint) of $M$. Then
		$$\mathbb{P}\left(\sigma_m(M)\le \frac{\varepsilon^2 \alpha^2}{ m^3 s}\right)\le c_{1.6} \varepsilon$$
		for every $\varepsilon\ge 0$ and $s>0$ with $\mathbb{P}\left(\sigma_1(M)> s\right)\le \frac{1}{8}.$
	\end{theorem}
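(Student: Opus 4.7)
The plan is to pass from the pointwise small-ball hypothesis to a uniform lower bound on $\sigma_m(M)=\min_{y\in S^{m-1}}\|My\|$ via a net argument, exploiting the Gaussian structure to upgrade the scalar small-ball $\mathbb P(|x^*My|>\alpha)\ge 1/2$ to a vector-valued one for $\|My\|$.

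I would begin by fixing a unit vector $y$ (with the constraint $x=y$ in the real-symmetric and self-adjoint cases of $S_{m,T}$). The vector $My$ is jointly Gaussian in $\mathbb R^m$ or $\mathbb C^m$, and applying the hypothesis at every unit $x$ yields $\mathbb E|x^*My|^2\ge \alpha^2/2$, so that the second-moment operator $\mathbb E[(My)(My)^*]$ dominates $(\alpha^2/2)I_m$. Combining this with Anderson's symmetry inequality (to absorb the mean) and a standard Gaussian density/volume comparison produces the pointwise small-ball
\[
\mathbb P\bigl(\|My\|\le r\bigr)\le \left(\frac{Cr}{\alpha\sqrt m}\right)^{\!m}
\]
for every $r>0$.

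Next, I would extend this to a uniform statement via an $\eta$-net $\mathcal N\subset S^{m-1}$ of cardinality $(3/\eta)^m$. On the event $\sigma_1(M)\le s$, which has probability at least $7/8$ by assumption, the Lipschitz estimate $\big|\|My\|-\|My'\|\big|\le s\|y-y'\|$ lets a union bound over $\mathcal N$ control $\sigma_m(M)$. Balancing the net size against the pointwise small-ball with $\eta$ of order $r/s$ gives a bound of shape $(Cs/(\alpha\sqrt m))^m$, which already proves the theorem in the easy regime $s=O(\alpha\sqrt m)$. For general $s$, I would supplement the net argument by the identity $\sigma_m(M)\ge |\det M|/\sigma_1(M)^{m-1}$ and apply a Carbery--Wright-type anti-concentration to $\det M$, viewed as a degree-$m$ polynomial in the Gaussian coefficients $g_1,\ldots,g_{m^2}$. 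The requisite input is a lower bound of the form $\|\det M\|_{L^2}\ge (c\alpha)^m$, which would follow from the small-ball by an inductive cofactor expansion whose base case leans on the structural Theorems~\ref{intro-non-singular-1}--\ref{intro-non-singular-2}. The quadratic threshold $\varepsilon^2\alpha^2/(m^3s)$ against linear probability $c\varepsilon$ then reflects compounding the two anti-concentrations (net-based and determinant-based), with $m^3$ absorbing the geometric loss of the net argument and the dimensional overhead of the induction.

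The main obstacle is the quantitative $L^2$-lower bound on $\det M$. The pointwise small-ball does not a priori rule out cancellations in the Isserlis/Wick expansion of $\mathbb E|\det M|^2$, so the induction must use the hypothesis at a well-chosen pair $(x,y)$ to dominate a cofactor of $\det M$ and reduce to an $(m-1)\times (m-1)$ anti-concentration problem, while preserving the correct dependence on $\alpha$. A subsidiary difficulty is that the covariance of $My$ can be rank-deficient (though only by at most one), requiring a separate case analysis to make the density/volume comparison rigorous in the first step.
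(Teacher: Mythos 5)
Your approach departs entirely from the paper's, and several of its load-bearing steps break down.

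First, the passage from the scalar small-ball hypothesis to a vector-valued one of the shape $\mathbb{P}(\|My\| \le r) \le (Cr/(\alpha\sqrt m))^m$ is not available in the real symmetric and self-adjoint cases: there the hypothesis only constrains the quadratic form $y^*My$, not $x^*My$ for $x\ne y$, so ``applying the hypothesis at every unit $x$'' has no meaning. A two-by-two example shows the bound is simply false: for $M=\mathrm{diag}(g,c)$ with $g$ standard normal and $c$ a nonzero constant, $P_{\alpha,\mathrm{real\ symmetric}}(M)\ge \frac{1}{2}$ holds for small $\alpha$, yet $\mathbb{P}(\|Me_1\|\le r)=\mathbb{P}(|g|\le r)\asymp r$, which dominates $(Cr/(\alpha\sqrt 2))^2$ as $r\to 0$. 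Even in the non-symmetric case the deduction is flawed: a lower bound on the second-moment operator $\mathbb{E}[(My)(My)^*]\succeq (\alpha^2/2)I$ does not lower bound the covariance (the mean may carry the mass in some directions), and Anderson's inequality runs in the wrong direction for this purpose, while the claim that the covariance of $My$ is rank-deficient ``by at most one'' is incorrect --- for $y=e_1$, $My$ is the first column of $M$, which may be entirely deterministic.

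Second, even granting the vector small-ball, the net argument cannot produce a linear-in-$\varepsilon$ probability bound. To pass from a net to the whole sphere you need $\eta\lesssim r/s$, so the net has $(Cs/r)^m$ points, and multiplying by $(Cr/(\alpha\sqrt m))^m$ gives $(C's/(\alpha\sqrt m))^m$ --- a quantity independent of $r$ and hence of $\varepsilon$. It cannot be $\le c\varepsilon$ as $\varepsilon\to 0$, so the ``easy regime'' never actually closes the theorem for any range of $s$.

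Third, the determinant route degrades far too fast in $s$. Going through $\sigma_m(M)\ge |\det M|/\sigma_1(M)^{m-1}\ge |\det M|/s^{m-1}$ and Carbery--Wright at its sharp rate $\mathbb{P}(|\det M|\le \varepsilon^m\|\det M\|_2)\lesssim m\varepsilon$ produces a threshold of order $(\varepsilon c\alpha)^m/s^{m-1}=(\varepsilon c\alpha/s)^m\cdot s$; for $s$ polynomially large in $m$ this is exponentially smaller than the promised $\varepsilon^2\alpha^2/(m^3 s)$, a loss no polynomial factor can recoup. Moreover, the premise $\|\det M\|_2\ge(c\alpha)^m$ is itself doubtful: the hypothesis provides no sign control, so Wick contractions in $\mathbb{E}|\det M|^2$ may cancel, and Theorems~\ref{intro-non-singular-1}--\ref{intro-non-singular-2} supply only qualitative non-vanishing, not a quantitative $L^2$ bound.

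The paper avoids all of these pitfalls by a completely different mechanism: it tracks $\mathbb{E}[\log D_m(M)+\log D_{m-1}(M)]$ under a multiplicative perturbation by a heavy-tailed self-similar variable $u$ with density $\propto t^{-3/2}$, uses the distributional identity $(1+2u)M+(2u+2u^2)M'-2u\,\mathbb{E}(M)\sim(1+2u+2u^2)M$ for an independent copy $M'$, proves a monotonicity (Lemma~\ref{lemma-monotone}) and a quantitative increment when $\sigma_m$ is small (Lemma~\ref{lemma-positive}), and closes with Markov's inequality. That is how the linear $\varepsilon$-dependence and the polynomial threshold in $m$, $\alpha$, $s$ are obtained simultaneously.
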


	The condition $\mathbb{P}\left(\sigma_1(M)> s\right)\le \frac{1}{8}$ is just one of many equivalent ways to impose a polynomial bound on the matrix norm. 
	
	We provide an example to illustrate what may happen without the global small-ball probability condition. Let $M$ be an $m\times m$ matrix with entries $$a_{i,j}=\begin{cases}
		g&\mbox{, if }i+j=m+1,\\
		m&\mbox{, if }i+j=m+2,\\
		0&\mbox{, otherwise,}
	\end{cases}$$
	where $g$ is a real standard normal random variable. Then $M$ is a real symmetric matrix with jointly Gaussian entries. We even have a global small-ball probability bound for $M^T M$: $$\inf_{x\in S^{m-1}}\mathbb{P}\left(\norm{Mx}_2>\frac{1}{2}\right)\ge \frac{1}{2}.$$
	However, the global small-ball probability condition for $M$ $$\inf_{x\in S^{m-1}}\mathbb{P}\left(\left|x^* M x\right| >\alpha\right)\ge \frac{1}{2}$$ is not satisfied for any $\alpha\ge 0$. And with overwhelming probability, the minimum singular value of $M$ is exponentially small.

	\subsection{Matrix anti-concentration inequality with Gaussian coefficients}
	Spielman and Teng \cite{Spielman} introduced the smoothed analysis of an algorithm to explain why the simplex algorithm works well in practice. They assumed that the input matrix in real life is perturbed by a random matrix with i.i.d. Gaussian entries, which makes it well-conditioned. Specifically, for an $m\times m$ deterministic matrix $A$ and an $m\times m$ random matrix $M$ with independent standard normal entries, Sankar, Spielman and Teng \cite{Sankar} proved that $$\mathbb{P}\left(\sigma_m\left(A+M\right)\le \varepsilon\right)\le c \varepsilon \sqrt{m}$$ for an absolute constant $c>0$ and every $\varepsilon\ge 0$. Under the additional bound on the minimum singular value, many algorithms and heuristics enjoy much better smoothed complexity compared with their poor worst-case behaviors.
	
	In the past fifteen years, much effort has been put into the development of tools in random matrix theory under the smoothed analysis setting. For matrices with i.i.d. entries, Tao and Vu obtained the following result.
	\begin{theorem}\label{tao-vu}\cite{Tao}
		For any $\gamma\ge \frac{1}{2}$, $A\ge 0$ and any centered random variable $x$ with bounded second moment, there exists $c>0$ such that the following statement holds.
		
		Let $A$ be an $m\times m$ deterministic matrix with $\sigma_1\left(A\right)\le m^{\gamma}$. Let $M$ be an $m\times m$ random matrix whose entries are independent copies of $x$. Then we have
		$$\mathbb{P}\left(\sigma_m\left(A+M\right)\le m^{-(2A+1)\gamma}\right)\le c\left(m^{-A+o(1)}+\mathbb{P}\left(\sigma_1\left(M\right)\ge m^{\gamma}\right)\right).$$
	\end{theorem}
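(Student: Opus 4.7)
The plan is to follow the Rudelson-Vershynin framework for minimum singular values of random matrices, adjusted for the deterministic shift $A$. First I would reduce to distance estimates via the standard inequality
$$\sigma_m(A+M) \ge \frac{1}{\sqrt{m}} \min_{1 \le i \le m} \mathrm{dist}(C_i, H_i),$$
where $C_i$ denotes the $i$-th column of $A+M$ and $H_i$ the linear span of the remaining $m-1$ columns. For each $i$, pick a unit normal $v_i \in H_i^\perp$. Since $v_i$ depends only on the other columns, it is independent of $C_i$, and $\mathrm{dist}(C_i, H_i) = |\langle v_i, A_i\rangle + \langle v_i, M_i\rangle|$, reducing the task to a small-ball estimate for a linear functional of $M_i$ with a deterministic shift $\langle v_i, A_i\rangle$.

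Next I would partition $S^{m-1}$ into compressible vectors (those within $\ell^2$-distance $\rho$ of the set of $k$-sparse vectors, for suitable constants $k = \Theta(m)$ and $\rho = \Theta(1)$) and incompressible ones. On the event $\sigma_1(M) \le m^\gamma$, one has $\|A+M\| \le 2m^\gamma$, and a standard $\varepsilon$-net argument shows that $(A+M)^*$ cannot have a compressible vector in its near-kernel; hence $v_i$ is incompressible with overwhelming probability, outside the exceptional event $\sigma_1(M) \ge m^\gamma$ which is already accounted for in the conclusion.

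The heart of the argument, and the main obstacle, is bounding the small-ball probability when $v_i$ is incompressible. Here I would invoke the Tao-Vu inverse Littlewood-Offord theorem: if
$$\mathbb{P}_{M_i}\bigl(|\langle v_i, M_i\rangle + \langle v_i, A_i\rangle| \le m^{-(2A+1)\gamma + 1/2}\bigr) > m^{-A-1+o(1)},$$
then $v_i$ must lie near a generalized arithmetic progression of very small volume. Translation invariance of the inverse result makes the shift $\langle v_i, A_i\rangle$ irrelevant, and an $\varepsilon$-net count on $H_i^\perp$ shows that such structured $v_i$ occur with probability at most $m^{-A+o(1)}$. Taking a union bound over $1 \le i \le m$ and combining with the distance reduction then yields the stated estimate; the exponent $(2A+1)\gamma$ reflects the coupling of the $m^{-A}$ probability decay with the $\sqrt{m}$ loss from the reduction to distances and the $m^\gamma$ operator-norm bound that enters the net arguments for both the compressible decomposition and the inverse Littlewood-Offord step.
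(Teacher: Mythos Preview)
This theorem is not proved in the paper: it is stated with the citation \cite{Tao} and serves only as background, illustrating the kind of smoothed-analysis bound that motivates Theorem~\ref{Matrix_anticoncentration}. There is therefore no ``paper's own proof'' to compare your proposal against.

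That said, your outline is essentially the strategy of the original Tao--Vu paper (with some Rudelson--Vershynin packaging): the reduction to column-to-hyperplane distances, the compressible/incompressible dichotomy for the normal vector, and the inverse Littlewood--Offord theorem on the incompressible side are exactly the ingredients used there. One point to be careful about: your compressible step as written (``a standard $\varepsilon$-net argument shows that $(A+M)^*$ cannot have a compressible vector in its near-kernel'') requires an anti-concentration input for the individual entries of $M$ that is not implied by the mere hypothesis of bounded second moment; in the Tao--Vu argument this is handled via the inverse theory as well, and the polynomial losses there are what force the $o(1)$ in the exponent and the dependence of $c$ on the distribution of $x$. As a high-level sketch your proposal is sound, but if you were to write it out in full you would need to track those losses more carefully than a one-line appeal to a net.
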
 
	Better bounds are derived under stronger conditions such as $x$ is subgaussian \cite{Dong} and that $A$ has $\Omega(n)$ singular values which are $O(n)$ \cite{Jain}. The smoothed analysis of matrices with independent rows was conducted \cite{Tikhomirov2} by Tikhomirov. Farrell and Vershynin \cite{Farrell} studied the smoothed analysis of symmetric random matrices.
	
	From the perspective of theoretical computer science, these assumptions on independence are still way too strong while analyzing a randomized algorithm in the worst case scenario. For our application in sparse linear system solving, we need to develop a better minimum singular value bound for matrices without much entry-wise independence condition. First, we explore the possibilities of generalizing Theorem \ref{main-2}. 
	
	The proof of Theorem \ref{main-2} relies critically on the fact that the sum of independent normal variables is also normally distributed. Let $M_0, M_1,\ldots, M_n$ be $m\times m$ deterministic complex matrices, and $x_1,\ldots, x_n$ be independent copies of a random variable $x$. The general problem of bounding the minimum singular value of the matrix
	$$M:=M_0+\sum_{i=1}^n x_i M_i$$
	remains unsolved, even when $x$ follows a uniform distribution on the interval $[-1,1]$. 
	
	Another direction for generalization is to allow randomness in each coefficient matrix $M_i$. Note that the global small-ball probability bound on the random matrix $M$ does not even imply a nonzero second largest singular value $\sigma_{2}(M)$. In order to guarantee a nonzero minimum singular value $\sigma_{m}(M)$, we define $M$ as a sum of at least $m$ independent random matrices which jointly satisfy a global small-ball probability bound. In this spirit, we propose the following matrix anti-concentration inequality.

\begin{theorem}\label{Matrix_anticoncentration}
	There exists an absolute constant $c_{1.8}>0$ such that the following statement holds.
	
	Let $M_0,M_1,\ldots, M_n$ be independent complex $m\times m$ random matrices, and let $g_1,\ldots, g_n$ be independent real standard normal random variables. Let $M'$ be the random matrix uniformly chosen from $M_1,\ldots, M_n$.
	
	Assume the global small-ball probability bound $P_{\alpha,T}(M')\ge \beta$, where $\alpha \ge 0, 0<\beta \le 1$ and $T$ is a common matrix type of $M_0,M_1,\ldots, M_n$. Suppose that 
	$$n\ge \frac{c_{1.8}m}{\beta} \log \frac{2}{\beta}.$$
	Then the random matrix $$M:=M_0+\frac{1}{\sqrt{n}}\sum_{i=1}^n g_i M_i$$ satisfies
	$$\mathbb{P}\left(\sigma_m(M)\le \frac{\varepsilon^2 \alpha^2 \beta}{m^3 s}\right)\le c_{1.8}\left(\varepsilon+\mathbb{P}\left(\sigma_1(M)>s\right)\right)+ \exp\left(-\frac{n\beta}{c_{1.8}}\right)$$
	for each $\varepsilon\ge 0$ and $s\ge 0$.
\end{theorem}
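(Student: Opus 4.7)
The plan is to condition on $(M_0, M_1, \ldots, M_n)$, which turns $M$ into a conditionally Gaussian random matrix of type $T$, and then invoke Theorem~\ref{main-2} conditionally. For each $(x,y) \in S_{m,T}$, the conditional law of $x^* M y$ given the $M_i$'s is Gaussian with mean $x^* M_0 y$ and variance
\[
V(x,y) \;:=\; \tfrac{1}{n}\sum_{i=1}^n |x^* M_i y|^2.
\]
A one-dimensional Gaussian small-ball calculation (the minimum of $\mathbb{P}(|X|>t)$ over Gaussians $N(\mu,\sigma^2)$ with fixed $\sigma$ is attained at $\mu=0$) shows that $\mathbb{P}(|x^* M y|>c_0\sqrt{V(x,y)}\mid M_i\text{'s})\geq\tfrac12$ for a universal constant $c_0$. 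Hence, once the uniform lower bound $V(x,y)\geq c_1\alpha^2\beta$ is established over all $(x,y)\in S_{m,T}$ with high probability over $(M_i)$, the conditional matrix satisfies the global small-ball bound $P_{c_0\sqrt{c_1}\,\alpha\sqrt\beta,\,T}(M\mid M_i\text{'s})\geq\tfrac12$, and Theorem~\ref{main-2} applied conditionally delivers the claimed $\tfrac{\varepsilon^2\alpha^2\beta}{m^3 s}$-scale bound on $\sigma_m(M)$, with the operator-norm hypothesis absorbed into the $\mathbb{P}(\sigma_1(M)>s)$ term in the conclusion.

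The heart of the proof is the uniform variance lower bound, and for this I would apply Theorem~\ref{main-1}. Fix $y\in S^{m-1}$ and consider the rank-one positive semidefinite matrices $A_i^{(y)}:=(M_iy)(M_iy)^*$. The hypothesis $P_{\alpha,T}(M')\geq\beta$ yields $\sum_i\mathbb{P}(x^* A_i^{(y)} x>\alpha^2)=\sum_i\mathbb{P}(|x^* M_iy|>\alpha)\geq n\beta$ for every unit $x$, so Theorem~\ref{main-1} (applied in the PSD self-adjoint regime, under the hypothesis $n\geq c_{1.1}\,m\log(2/\beta)/\beta$) gives $\sigma_m\bigl(\tfrac{1}{n}\sum_i A_i^{(y)}\bigr)\geq\alpha^2\beta/2$ with failure probability at most $\exp(-n\beta/c_{1.1})$. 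This is precisely $V(x,y)\geq\alpha^2\beta/2$ uniform in $x$, for that fixed $y$. A union bound over an $\eta$-net $N_\eta\subset S^{m-1}$ of cardinality $(3/\eta)^{O(m)}$ extends the bound to every $y\in N_\eta$; the self-adjoint case is analogous, with a single net placed on the diagonal $x=y$ and the same rank-one construction applied to each $(x^* M_ix)^2$.

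The main obstacle is extending $V(x,y)$ from $N_\eta$ to the full sphere. The natural continuity estimate $|V(x,y)-V(x,y_0)|\lesssim\eta\cdot\tfrac{1}{n}\sum_i\|M_i\|^2$ involves a quantity that the hypotheses do not directly control. My preferred remedy is to truncate each $M_i$ at a level $K$---replace $M_i$ by $M_i\,\mathbf{1}[\|M_i\|\leq K]$, verify that the averaged small-ball property survives the truncation, and absorb the contribution from $\{\|M_i\|>K\}$ into the $\mathbb{P}(\sigma_1(M)>s)$ term of the conclusion. Choosing $K$ polynomially in $1/\beta$ times $\alpha$ keeps the enlarged net, of size $(c_3 K/(\alpha\sqrt\beta))^{O(m)}=(2/\beta)^{O(m)}$, compatible with the failure probability $\exp(-n\beta/c_{1.1})$. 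This is precisely what the $\log(2/\beta)$ factor in the hypothesis $n\geq c_{1.8}\,m\log(2/\beta)/\beta$ is there to buy. Plugging the uniform variance bound obtained in this way into Theorem~\ref{main-2} then closes the argument.
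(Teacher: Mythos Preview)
Your overall architecture---condition on $(M_0,\ldots,M_n)$, establish a uniform lower bound on the conditional variance $V(x,y)=\frac{1}{n}\sum_i|x^*M_iy|^2$, then invoke Theorem~\ref{main-2} conditionally---matches the paper exactly, including how the operator-norm hypothesis is converted by Markov's inequality into the $\mathbb{P}(\sigma_1(M)>s)$ term in the conclusion.

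The gap is in your route to the uniform variance bound. Passing from a net in $y$ to the full sphere by Lipschitz continuity requires control on $\frac{1}{n}\sum_i\|M_i\|^2$, and the hypotheses give none. Your truncation remedy does not close this: nothing in the statement bounds $\mathbb{P}(\|M_i\|>K)$, so the truncated matrices $M_i\mathbf{1}[\|M_i\|\le K]$ need not retain the averaged small-ball property; and the event $\{\exists i:\|M_i\|>K\}$ cannot be absorbed into $\mathbb{P}(\sigma_1(M)>s)$, since large individual $\|M_i\|$ has no controlled relation to $\sigma_1\bigl(M_0+\frac{1}{\sqrt n}\sum g_iM_i\bigr)$---contributions can cancel in the Gaussian sum, and $M_0$ is arbitrary. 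So the two items you list as routine (``verify that the small-ball property survives'' and ``absorb the contribution'') are exactly where the argument breaks.

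The paper sidesteps the net-and-continuity step entirely. Each set $r(x,y):=\{M:|x^*My|>\alpha\}$ is the positivity set of a degree-$4$ real polynomial in the $O(m)$ real coordinates of $(x,y)$, so by Warren's bound the family $\{r(x,y):(x,y)\in S_{m,T}\}$ has VC dimension $O(m)$. The paper's Theorem~\ref{theorem-epsilon} (a VC-type uniform law for not-necessarily-identically-distributed samples) then gives, with failure probability $\exp(-n\beta/c)$ and under precisely the hypothesis $n\gtrsim m\beta^{-1}\log(2/\beta)$, that $|\{i:M_i\in r(x,y)\}|\ge n\beta/2$ \emph{simultaneously for every} $(x,y)\in S_{m,T}$. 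This yields $V(x,y)\ge\alpha^2\beta/2$ uniformly, with no norm assumption on the $M_i$ whatsoever. In fact Theorem~\ref{main-1}, which you invoke for each fixed $y$, is itself proved by exactly this VC argument; the correct move is simply to run it once over the joint parameter $(x,y)$ rather than one coordinate at a time.
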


One can find similarity among Theorem \ref{main-1}, Theorem \ref{tao-vu} and Theorem \ref{Matrix_anticoncentration}. In particular, Theorem \ref{Matrix_anticoncentration} bounds the minimum singular value of the matrix $\sum_{i=1}^n x_i M_i$ where $M_0, M_1,\ldots, M_n$ be deterministic complex matrices, and $x_1,\ldots, x_n$ be i.i.d. sparse Gaussian variables. This fact will be applied in the next subsection.

\subsection{Minimum singular value of Krylov space matrix}

Our main application of Theorem \ref{Matrix_anticoncentration} is to improve the algorithm for solving sparse linear systems. Let $A$ be an $n\times n$ real matrix with $nnz(A)$ nonzero entries and condition number $n^{O(1)}$. Peng and Vempala developed an algorithm \cite{PV21} that solves the linear system $Ax=b$ to accuracy $n^{-O(1)}$ in $\tilde{O}\left( \mathrm{nnz}(A)^{\frac{\omega-2}{\omega-1}}n^2+ n^{\frac{5\omega-4}{\omega+1}}\right)$ time, where $\omega<2.37286$ is the matrix multiplication exponent. When $A$ is sufficiently sparse, it is faster than the matrix multiplication.

This algorithm is a numerical version of those \cite{E06,E07} by Eberly, Giesbrecht, Giorgi, Storjohann and Villard, which deal with the finite field case and run in $\tilde{O}\left( \mathrm{nnz}(A)^{\frac{\omega-2}{\omega-1}}n^2\right)$ time. It is worth noting that Casacuberta and Kyng \cite{Casacuberta} further reduced the complexity in the finite field case by taking a different approach.

To achieve the stability, Peng and Vempala estimated the minimum singular value of the randomized Krylov space matrix as follows. 

	\begin{theorem}\label{peng-vem}\cite[Theorem 3.7]{PV}
	Let $A$ be an $n\times n$ real symmetric positive definite matrix with magnitudes of entries at most $n^{-1}$, and $\alpha<n^{-10}$ be a positive real number such that
	\begin{enumerate}
		\item all eigenvalues of $A$ are at least $\alpha$, and
		\item all pairs of eigenvalues of $A$ are separated by at least $\alpha$.
	\end{enumerate}
	Let $s$ and $m$ be positive integers with $n^{0.01}\le m\le n^{\frac{1}{4}}$ and $sm\le n-5m$. Let $G$ denote an $n\times s$ sparse Gaussian matrix where each entry is set to $\mathcal{N}(0, 1)$ with probability $\frac{h}{n}$, and $0$ otherwise. Suppose that $$h\ge 10000 m^3 \log\frac{1}{\alpha}.$$
	Then the Krylov space matrix
	$$K:=\left[
	\begin{array}{c|c|c|c|c}
		G & A G & A^2 G &  \ldots & A^{m-1} G
	\end{array}
	\right]
	$$
	satisfies  $$\mathbb{P}\left(\sigma_n(K)\le \alpha^{5m}\right)\le n^{-2}.$$
	\end{theorem}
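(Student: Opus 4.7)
The plan is to bound $\sigma_n(K)$ via Theorem~\ref{Matrix_anticoncentration} applied to a square $n\times n$ submatrix $\tilde K$ extracted from the $n\times sm$ matrix $K$; since $\sigma_n(K) \ge \sigma_n(\tilde K)$, it suffices to bound $\sigma_n(\tilde K)$ from below. To put $\tilde K$ in the form required by the theorem, I would decompose each sparse Gaussian entry $G_{\ell, i}$ as $\gamma_{\ell, i}\, g_{\ell, i}$ with independent $\gamma_{\ell, i} \sim \mathrm{Bernoulli}(h/n)$ and $g_{\ell, i} \sim \mathcal{N}(0,1)$, so that
\[
\tilde K = \sum_{\ell, i} g_{\ell, i} \cdot (\gamma_{\ell, i} F_{\ell, i}),
\]
where $F_{\ell, i}$ is the deterministic $n\times n$ pattern recording the contribution of entry $(\ell, i)$ to $\tilde K$. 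This matches the hypothesis of Theorem~\ref{Matrix_anticoncentration} with standard Gaussian coefficients $g_{\ell, i}$ and independent random matrices $M_{\ell, i} = \gamma_{\ell, i} F_{\ell, i}$.

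Next I would verify the global small-ball hypothesis $P_{\alpha_0, T}(M') \ge \beta_0$ for a uniformly random $M'$ among the $M_{\ell, i}$. A direct calculation gives $x^* F_{\ell, i} y = (p_y^{(i)}(A)\, x)_\ell$, where $p_y^{(i)}$ is the degree-$\le m-1$ polynomial whose coefficients form the $i$-th Krylov block of the unit vector $y$. Passing to the eigenbasis $A = V \Lambda V^*$, this evaluation becomes a linear combination of the entries of $V^* x$ with coefficients $p_y^{(i)}(\lambda_k)\, v_{k, \ell}$. The pairwise separation $\ge \alpha$ then furnishes a Vandermonde bound: for any $m$ distinct eigenvalues, the associated $m\times m$ Vandermonde matrix has minimum singular value at least $\alpha^{m(m-1)/2}/m^{O(m)}$. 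For generic $(x, y)$, this controls the number of positions $(\ell, i)$ at which $|x^* F_{\ell, i} y| > \alpha_0$ from below, yielding small-ball rate $\beta_0 \sim h/n$ at threshold $\alpha_0 = \alpha^{O(m)}$.

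The main technical obstacle is the case of pathological $(x, y)$: if $x = v_k$ is an eigenvector and $y$ is chosen so that the polynomial $p_y^{(i)}$ vanishes at $\lambda_k$ for every $i$, then $x^* F_{\ell, i} y \equiv 0$ for all $(\ell, i)$, and the small-ball rate naively degenerates to zero. The resolution leverages the slack in choosing $\tilde K$ out of the $sm$ available columns: by including sufficient additional Krylov directions, we force the polynomial space spanned by $\{p_y^{(i)}\}_i$ to have generic dimension close to $m$ across all admissible $y$, so no single $\lambda_k$ can be a common root; this restores the uniform small-ball $\beta_0 \sim h/n$. The density hypothesis $h \ge 10000\, m^3 \log(1/\alpha)$ is exactly what is needed here to control both the worst-case dimension loss and the number of Gaussian summands $N$ so that the condition $N \ge c\, n/\beta_0 \log(2/\beta_0)$ of Theorem~\ref{Matrix_anticoncentration} is satisfied.

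Applying Theorem~\ref{Matrix_anticoncentration} with these parameters yields $\sigma_n(\tilde K) \ge c\, \varepsilon^2 \alpha_0^2 \beta_0/(n^3 s)$ with failure probability at most $c(\varepsilon + \mathbb{P}(\sigma_1(K) > s)) + \exp(-N\beta_0/c)$. Picking $\varepsilon \sim n^{-3}$ and using a polynomial operator-norm bound $\sigma_1(K) = \mathrm{poly}(n)$ (which follows from $|A_{ij}| \le 1/n$ and standard sparse Gaussian tail estimates on $\|G\|_F$) then yields $\sigma_n(K) \ge \alpha^{5m}$ with probability at least $1 - n^{-2}$.
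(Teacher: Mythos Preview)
The paper does not prove Theorem~\ref{peng-vem}; it is quoted from Peng--Vempala \cite[Theorem~3.7]{PV} as prior work, and the paper's own contribution is the improved Theorem~\ref{main-4}. So there is no in-paper proof to compare against, but your route can still be assessed, and compared with how the paper proves the related Theorem~\ref{main-4}.

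There are two genuine gaps. First, a dimensional one: $K$ is $n\times sm$ with $sm\le n-5m<n$, so $K$ has fewer columns than rows and contains no $n\times n$ submatrix $\tilde K$ at all. (The paper itself notes that the bound in Theorem~\ref{peng-vem} ``is for rectangular matrices''; the symbol $\sigma_n(K)$ should be read as the minimum singular value $\sigma_{sm}(K)$.) Your inequality $\sigma_n(K)\ge\sigma_n(\tilde K)$ and the subsequent reduction therefore do not get off the ground.

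Second, and more fundamentally, the global small-ball hypothesis of Theorem~\ref{Matrix_anticoncentration} fails for the matrices $M_{\ell,j}=\gamma_{\ell,j}F_{\ell,j}$, and your proposed fix cannot repair it. As you compute, $x^*F_{\ell,j}y=\bigl(p_j^{(y)}(A)x\bigr)_\ell$ with $p_j^{(y)}(t)=\sum_i y_{i,j}t^i$. Take $x=v_k$ an eigenvector of $A$ and any unit $y$ with $p_j^{(y)}(\lambda_k)=0$ for all $j$; this imposes only $s$ linear constraints on the $sm$ coordinates of $y$, so such $y$ fill an $s(m-1)$-dimensional subspace. For this $(x,y)$ pair one has $x^*F_{\ell,j}y=0$ for \emph{every} $(\ell,j)$, hence $P_{\alpha_0,T}(M')=0$ for all $\alpha_0>0$. ``Including additional Krylov directions'' cannot help: the bad pair annihilates all $F_{\ell,j}$ simultaneously, so no column selection avoids it, and $K$ has too few columns to spare anyway.

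This obstruction is exactly why the paper's proof of Theorem~\ref{main-4} does \emph{not} apply Theorem~\ref{Matrix_anticoncentration} directly to the Krylov matrix. Instead it decomposes $S^{n-1}$ into $A$-compressible and $A$-incompressible vectors (Definition~\ref{def-com}), handles the compressible vectors by nets, subspace grouping and an induction over $\log m$ levels of sparsity (Lemmas~\ref{lemma-base}, \ref{lemma-eps-net}, \ref{lemma-mid}), and invokes Theorem~\ref{Matrix_anticoncentration} only inside Lemma~\ref{lemma-d} on small $k\times k$ blocks $U_i^TN_i$, after restricting to $w\in\IncompA{k,l,\varepsilon}$ where the small-ball condition is restored by definition. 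The compressible/incompressible machinery is the missing idea in your plan.
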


	Our final main result is to provide a better lower bound on the minimum singular value of the Krylov space matrix in the following way.

	\begin{theorem}\label{main-4}
		There exists an absolute constant $c_{1.10}>0$ such that the following statement holds.
		
		Let $m$ be a positive factor of $n$ with $$n>c_{1.10} m^2\log n +1.$$ Let $A_1,\ldots, A_m$ be $n\times n$ real deterministic matrices with $\sigma_1\left(A_i\right)\le 1$ for each $i=1,\ldots, m$.
		Suppose that there exists a positive real number $\alpha\le n^{-\log n}$ such that
			$$\sigma_{n-k+1}\left(\sum_{i=1}^k x_i A_i\right)\ge \alpha^{k}$$
		for every $k\in\left\{1,\ldots, m\right\}$ and every unit vector $(x_1,\ldots,x_k)$ in $S^{k-1}$.
		
		Let $G$ denote an $n\times \frac{n}{m}$ sparse Gaussian matrix where each entry is set to $\mathcal{N}(0, 1)$ with probability $\frac{h}{n}$, and $0$ otherwise. Suppose that $$h\ge c_{1.10} m \log n \log\frac{1}{\alpha}.$$
		Then the Krylov space matrix
		$$K:=\left[
		\begin{array}{c|c|c|c}
			A_1 G & A_2 G &\ldots & A_m G
		\end{array}
		\right]
		$$
		satisfies  $$\mathbb{P}\left(\sigma_n(K)\le \alpha^{500m}\right)\le\alpha.$$
	\end{theorem}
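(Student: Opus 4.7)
The plan is to apply Theorem \ref{Matrix_anticoncentration} to $K$ after conditioning on the Bernoulli supports $\Omega := \{(k, j) : \epsilon_{k, j} = 1\}$ that determine the sparsity pattern of $G$. Given $\Omega$, the matrix $K$ is linear in the i.i.d.\ standard Gaussians $g_{k, j}$: writing $\tilde N_{k, j}$ for the deterministic $n \times n$ matrix whose $(i, j)$-th column (for $i = 1, \ldots, m$) equals $A_i e_k$ and whose other columns vanish, we have $K = \sum_{(k, j) \in \Omega} g_{k, j} \tilde N_{k, j}$. Taking $M_0 = 0$ and $M_i = \sqrt{|\Omega|}\,\tilde N_{k_i, j_i}$ in an enumeration of the pairs in $\Omega$, this fits the template of Theorem \ref{Matrix_anticoncentration} with matrix dimension $n$ and number of Gaussian summands $|\Omega|$. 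The routine checks are easy: the bound $\sigma_1(K) \le n^{O(1)}$ with probability $\ge 7/8$ follows from standard operator norm estimates for sparse Gaussian matrices together with $\sigma_1(A_i) \le 1$, and the counting requirement $|\Omega| \ge c n \beta_0^{-1} \log(2/\beta_0)$ holds with high probability over $\epsilon$ since $\mathbb E|\Omega| = nh/m$ is comfortably large under the hypothesis $h \ge c_{1.10} m \log n \log(1/\alpha)$.

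The heart of the proof is verifying the global small-ball probability for a uniformly chosen $M'$ from $\{\sqrt{|\Omega|}\,\tilde N_{k, j}\}_{(k, j) \in \Omega}$. A direct computation shows that $x^T \tilde N_{k, j} y = y_j^T v_k(x)$, where $y_j := (y_{(i, j)})_{i=1}^m \in \mathbb R^m$ is the sub-block of $y$ attached to the $j$-th copy of $G$, and $v_k(x) := ((A_i^T x)_k)_{i=1}^m \in \mathbb R^m$. The hypothesis $\sigma_{n - m + 1}(\sum_i c_i A_i) \ge \alpha^m$ for unit $c$, summed over the top $n - m + 1$ singular values, yields the Frobenius Gram estimate $\lambda_{\min}(\mathbf G_A) \ge (n - m + 1)\alpha^{2m}$, where $\mathbf G_A := (\langle A_i, A_{i'}\rangle_F)_{i, i'}$. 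Combined with the identity $\sum_k (c^T v_k(x))^2 = \|(\sum_i c_i A_i)^T x\|^2$ applied blockwise with $c = y_j$, this gives a lower bound on the second moment $\sum_{(k, j)} (y_j^T v_k(x))^2$ for typical $(x, y)$. A Paley--Zygmund / concentration argument over the random support $\Omega$ then converts this second-moment control into the desired lower bound on the fraction of $(k, j) \in \Omega$ with $|y_j^T v_k(x)|$ exceeding the threshold $\alpha_0/\sqrt{|\Omega|}$, yielding small-ball parameters of the order $\alpha_0 = \alpha^{O(m)}$ and $\beta_0 = 1/\mathrm{poly}(\log n, \log(1/\alpha))$ to plug into Theorem \ref{Matrix_anticoncentration}.

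The hard part is controlling the degenerate $(x, y)$ pairs in which $y_j$ lies in the kernel of $\mathbf M(x) := (A_1^T x \mid \cdots \mid A_m^T x)$ for every $j$: there the bilinear form $x^T \tilde N_{k, j} y$ vanishes identically over $(k, j)$ and the uniform small-ball condition breaks. These bad directions lie in an algebraic subvariety parametrized by a unit $c \in S^{m - 1}$ together with $x \in \ker((\sum_i c_i A_i)^T)$, a space of dimension at most $m - 1$ by hypothesis; they must be handled separately, either by an epsilon-net restricted to this low-dimensional bad locus (paying an additional $\alpha^{O(m)}$ factor in the small-ball constants) or by a reduction showing that for such $y$, $\|K y\|$ is controlled by a lower-dimensional Krylov instance to which the same machinery applies. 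After tracking all parameters through the conclusion of Theorem \ref{Matrix_anticoncentration}, absorbing the bad-direction loss and using $\alpha \le n^{-\log n}$ to swallow all polynomial-in-$n$ factors, the resulting bound $\sigma_n(K) \ge \alpha^{500m}$ holds with failure probability at most $\alpha$.
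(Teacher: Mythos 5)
Your proposal takes a genuinely different route from the paper, but it has a critical gap that is not patched by the sketched fixes.

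The central move --- condition on the Bernoulli support $\Omega$, write $K = \sum_{(k,j)\in\Omega} g_{k,j}\tilde N_{k,j}$, and apply Theorem~\ref{Matrix_anticoncentration} directly to the full $n\times n$ matrix $K$ --- cannot go through as stated. Theorem~\ref{Matrix_anticoncentration} requires the global small-ball bound $P_{\alpha_0,T}(M')\ge\beta_0$ to hold uniformly over \emph{all} pairs $(x,y)\in S_{n,T}$, but as you correctly observe, there are $(x,y)$ for which $x^T\tilde N_{k,j}y=0$ identically, so $P_{\alpha_0,T}(M')$ is literally zero for those pairs and the hypothesis of the theorem is violated. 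The theorem has no built-in mechanism for excluding a bad set, and since it bounds the global $\sigma_n(K)$ rather than a restricted minimum over a subset of directions, there is no clean way to ``split off'' the degenerate directions and recombine afterward. Your proposed fixes do not resolve this: the obstruction is not an exact algebraic locus but an open neighborhood of it where the small-ball constant degrades continuously, and near that neighborhood the single-vector failure probability is not small enough to survive a union bound over an $\varepsilon$-net. This is exactly the ``technical difficulty'' flagged in the paper's introduction, and resolving it is where the bulk of the paper's effort in Section~4 is spent. There is also a quantitative problem: applying Theorem~\ref{Matrix_anticoncentration} at dimension $n$ contributes an error term $\exp(-n\beta_0/c_{1.8})$, and with $\beta_0 = 1/\mathrm{poly}(\log n,\log\tfrac1\alpha)$ one cannot in general make this $\le\alpha$, since $\log\tfrac1\alpha$ may vastly exceed $n\beta_0$ under the stated hypotheses (only $\alpha\le n^{-\log n}$ and $n>c_{1.10}m^2\log n+1$ are assumed). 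The paper avoids this by never invoking Theorem~\ref{Matrix_anticoncentration} at dimension $n$: it decomposes $S^{n-1}$ (vectors $w$, not pairs $(x,y)$) into $A$-compressible and $A$-incompressible classes at $O(\log m)$ scales, partitions the columns of $G$ accordingly, handles the most compressible scale by a direct net argument (Lemma~\ref{lemma-base}), and applies Theorem~\ref{Matrix_anticoncentration} only to $d_i\times d_i$ projections $U_i^TN_i$ with $d_i\le m$, restricted to low-dimensional subspaces produced by Lemma~\ref{lemma-eps-net}. In that reduced setting the effective ``$n\beta$'' in the error term is $\gtrsim (l+1)h/n\gtrsim h/\log n\gtrsim m\log\tfrac1\alpha$, which makes the error term $\alpha^{\Omega(m)}$ and closes the loop. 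Your observation relating the hypothesis on $\sigma_{n-k+1}(\sum_ix_iA_i)$ to the Frobenius Gram matrix $\mathbf G_A$ is correct, but second-moment control alone does not control the small-ball constant when the distribution of $(y_j^Tv_k(x))^2$ over $(k,j)$ is highly skewed, which is exactly what happens near the bad set.
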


	There are two main differences between Theorem \ref{peng-vem} and Theorem \ref{main-4}. First, the additional time cost of Peng and Vempala's algorithm comes from the $m^3$ factor in the sparsity bound. With our Theorem \ref{main-4}, the overall time complexity can be reduced to $\tilde{O}\left( \mathrm{nnz}(A)^{\frac{\omega-2}{\omega-1}}n^2\right),$ as the complexity derived in \cite{E06,E07} in the finite field case. Second, with the condition $sm\le n-5m$, the minimum singular value bound derived in \ref{peng-vem} is for rectangular matrices. Therefore, they need an extra padding step in their algorithm to form a square matrix. With Theorem  \ref{main-4}, we remove the padding step to simplify the algorithm, although this does not give asymptotic speedups. Specifically speaking, we replace the block Krylov space algorithm shown in \cite[Figure 2]{PV} with the following simplified version.
	\begin{figure}[h]
		\fbox{\parbox{\textwidth}{
				{\bf \textsc{SimplifiedBlockKrylov}(
					$\textsc{MatVec}_{A}( x, \delta)$:
					symmetric matrix given as implicit matrix vector muliplication access,
					$\alpha_{A}$: eigenvalue range/separation bounds for $A$,
					$m$: Krylov step count which is a factor of $n$
					)} 
				
				\begin{enumerate}
					
					\item (FORM KRYLOV SPACE) 
					\begin{enumerate}
						\item Set $s \leftarrow \frac{n}{m}$,
						$h \leftarrow O\left(m\log n \log\frac{1}{ \alpha_{A}} \right)$.
						Let $G$ be an $n \times s$ random matrix with each entry independently
						set to $\mathcal{N}(0, 1)$
						with probability $\frac{h}{n}$, and $0$ otherwise.
						
						\item Implicitly compute the block Krylov space
						\[
						K
						=
						\left[
						\begin{array}{c|c|c|c|c}
							G & A G & A^{2} G & \ldots & A^{m - 1} G
						\end{array}
						\right].
						\]
					\end{enumerate}
					
					\item (SPARSE INVERSE)
					Use fast solvers for block Hankel matrices to
					obtain a solver for the matrix $\left( A K\right)^{T}  AK$ and in turn a solve to arbitrary error which we denote $\textsc{Solve}_{\left( A K\right)^{T} AK}(\cdot, \varepsilon)$.
					
					\item (SOLVE and UNRAVEL) Return the operator
					$$K \cdot 
					\textsc{Solve}_{(AK)^{T}AK} \left( (AK)^{T}x, \alpha_{A}^{O(m)} \right)
					$$
					as an approximate solver for $A$.
				\end{enumerate}
		}}
		\caption{Pseudocode for simplified block Krylov space algorithm.
		}
		\label{fig:solver}
	\end{figure}

	The subsequent work of sparse linear regression by Ghadiri, Peng and Vempala \cite{Ghadiri} also benefit from our improvement because they used the algorithm in \cite{PV21} as a subroutine. 
	
	Let us also discuss other assumptions in Theorem \ref{main-4}. We will take $m=\Theta\left( \mathrm{nnz}\left(A\right)^{-\frac{1}{\omega-1}}n\right)$ in the algorithm, so the assumption $n>c_{1.10} m^2\log n +1$ can be satisfied. The assumption that $m$ is a factor of $n$ can be satisfied by adding trivial rows and columns to the matrix $A$. The assumption $$\sigma_{n-k+1}\left(\sum_{i=1}^k x_i A_i\right)\ge \alpha^{k}$$ can be deduced from the property \cite[Lemma 5.1]{PV} of Vandermonde matrices when $A_i=A^{i-1}$ and $A$ satisfies the assumption of Theorem \ref{peng-vem}. 
	
	The proof of Theorem \ref{main-4} is a hybrid of the classical tools developed in the non-asymptotic theory for matrices with i.i.d. entries and the matrix anti-concentration inequality. We decompose the unit sphere as classes of ``compressible'' and ``incompressible'' vectors in a way consistent with our global small-ball condition. Then our matrix anti-concentration inequality can be viewed as a matrix and Gaussian version\footnote{As a parallel result, a vector version of the Littlewood-Offord theorem was established \cite{Tao2} by Tao and Vu using Esséen's concentration inequality \cite{Esseen}.} of the Littlewood-Offord theorem. Therefore the inverse Littlewood-Offord method introduced in \cite{Tao3} applies to our case. 
	
	A technical difficulty of the proof comes from the compressible side. The single vector probability is not small enough to compensate the union bound, so we need to group these vectors in subspaces to utilize the matrix anti-concentration inequality as on the incompressible side. However, it is possible that a subspace contains vectors with different data compression ratio, which jeopardizes the probability bound. To overcome this difficulty, we partition the matrix $G$ into around $\log m$ parts, and handle each level of data compression ratio inductively.

	\section{Proof of Theorem \ref{main-2}}
	
	We first prove Theorem \ref{intro-non-singular-1}, Theorem \ref{intro-non-singular-2} and their complex counterparts in the following unified way.
	
	\begin{theorem}\label{o-version-s}
		Let $S$ be a real affine space of $m\times m$ complex matrices. Let $T$ be a common type of the matrices in $S$. Suppose that there exists $M\in S$ with $x^* M y\neq 0$ for every pair of $m$-dimensional unit column vectors $x , y$ with the following restrictions:
		\begin{enumerate}[label=(\arabic*)]
			\item if $T\in\{\mbox{real}, \mbox{real symmetric}\}$, let $x$ and $y$ be real;
			\item if $T\in\{\mbox{real symmetric}, \mbox{self-adjoint}\}$, let $x=y$.
		\end{enumerate}
		Then there exists $M\in S$ with $\det M\neq 0$.
	\end{theorem}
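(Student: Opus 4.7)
I would proceed by induction on $m$, arguing the contrapositive: assume every matrix in $S$ is singular, and derive that the hypothesis of the theorem must fail. The base case $m = 1$ is immediate, since any admissible unit pair $(x,y)$ forces some $M \in S$ to be a nonzero scalar, which is trivially its own nonzero determinant.

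For the inductive step, let $r$ be the maximum rank attained on $S$. The hypothesis rules out $S = \{0\}$, so $r \geq 1$ and hence $m - r < m$. Pick $M^* \in S$ of rank $r$ and apply a type-preserving change of basis: a two-sided SVD-style rotation $M \mapsto PMQ$ in the real and complex cases, and a single congruence $M \mapsto Q^* M Q$ supplied by the spectral theorem in the real symmetric and self-adjoint cases. After this change of basis I may assume
\[
M^* = \begin{pmatrix} A & 0 \\ 0 & 0 \end{pmatrix}
\]
with $A$ an invertible $r \times r$ block of type $T$; both the hypothesis and the matrix type are preserved. Let $V$ denote the linear part of $S$, and for $N \in V$ write $N$ in the same block form. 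Since the line $\{M^* + tN : t \in \mathbb{R}\}$ lies in $S$, the polynomial $\det(M^* + tN)$ must vanish identically in $t$. Expanding this determinant by a Schur complement around the invertible block $A + tN_{11}$ and factoring $t^{m-r}$ out of the $(m-r) \times (m-r)$ lower-right block leaves the identity $\det\!\left(N_{22} - tN_{21}(A + tN_{11})^{-1}N_{12}\right) \equiv 0$; evaluating at $t = 0$ forces $\det N_{22} = 0$ for every $N \in V$.

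Hence $V_{22} := \{N_{22} : N \in V\}$ is a linear (in particular affine) space of $(m-r)\times(m-r)$ matrices of type $T$, every element of which is singular. To close the induction I would verify that $V_{22}$ still satisfies the non-degeneracy hypothesis in dimension $m - r$: given an admissible unit pair $(x', y')$ there, the zero-padded vectors $\tilde x = (0, x')$ and $\tilde y = (0, y')$ are admissible in dimension $m$ (reality and the $x = y$ constraint survive zero-padding, and unit norm is preserved), and the ambient hypothesis supplies $M' \in S$ with $\tilde x^* M' \tilde y \neq 0$; writing $M' = M^* + N'$ with $N' \in V$, the block structure of $M^*$ together with the padding kills every contribution except $(x')^* N'_{22} y' \neq 0$. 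The induction hypothesis applied to $V_{22}$ then yields a nonsingular element of $V_{22}$, contradicting singularity of every $N_{22}$. The one place requiring real care is the basis change in the symmetric and self-adjoint cases, where a general SVD would fail to preserve the symmetry; using the spectral theorem to perform a single congruence transformation on both sides is what makes the reduction type-compatible and lets the same block/Schur-complement argument go through uniformly in all four cases.
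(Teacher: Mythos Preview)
Your argument is correct and takes a genuinely different route from the paper. The paper gives a one-step rank-bump with no induction: it picks $M \in S$ of maximal rank $k$, chooses $m\times k$ isometries $U,V$ (real and/or equal when the type demands it) with $\det(U^*MV)\neq 0$, picks unit vectors $x,y$ orthogonal to the columns of $U,V$, uses the hypothesis to produce $M'\in S$ with $x^*M'y\neq 0$, and then observes that the polynomial $t\mapsto \det\bigl(U'^{*}(tM-M')V'\bigr)$ (with $U',V'$ obtained by appending $x,y$) has leading coefficient $-(x^*M'y)\det(U^*MV)\neq 0$; hence some real $t\neq 1$ makes $\frac{1}{t-1}(tM-M')\in S$ have rank at least $k+1$, contradicting maximality. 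Your approach instead performs a type-preserving basis change, uses a Schur-complement expansion of $\det(M^*+tN)$ to force $\det N_{22}=0$ for every $N$ in the linear part, verifies that the lower-right block space $V_{22}$ inherits the small-ball hypothesis via zero-padding, and closes by induction on $m$. Both proofs handle the four types by the same device (SVD in the asymmetric cases, spectral theorem in the symmetric/self-adjoint cases), and both pivot on a single nonvanishing bilinear value $x^*M'y$; but the paper leverages that value to bump the rank directly, while you leverage it to transmit the hypothesis to a strictly smaller problem. The paper's version is shorter and self-contained; yours is more structural and makes the reduction to a lower-dimensional affine space explicit, which could be useful if one wanted a constructive or quantitative refinement.
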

	\begin{proof}
		Let $M$ be a matrix in $S$ with maximum rank $k$. Then there exist $m\times k$ matrices $U$ and $V$ with orthonormal columns, such that $U^* M V$ is non-singular. 
		
		Assume $k<m$ for the sake of contradiction. Then there exist unit column vectors $x, y$ with $U^* x= V^* y=0$. By the spectral theorem, we assume that $U, V, x,y$ are real if $M$ is real, and assume that $U=V$ and $x=y$ if $M$ is self-adjoint. By the condition we imposed on $S$, there exists $M'\in S$ with $x^* M' y\neq 0$. 
		
		Let $U'$ (resp. $V'$) denote the $m\times (k+1)$ matrices obtained by appending $x$ (resp. $y$) to $U$ (resp. $V$). Then the leading coefficient of the polynomial $\det\left(U'^* (t M - M') V'\right)$ with respect to $t$ is $-\left(x^* M' y\right)\det\left(U^* M V\right)$, which is non-zero. Hence there exists a real number $t\neq 1$ such that $\det \left(U'^* (t M - M') V'\right)\neq 0$, so the rank of $\frac{1}{t-1}(t M - M')$ is at least $k+1$, which is contradictory to our choice of $M$.
		
		Therefore, we have $\det M\neq 0$.
	\end{proof}

	We define a partial determinant as follows.
	
	\begin{definition}\label{def-det}
	For an $m\times m$ matrix $A$ and an integer $k$ with $0\le k\le m$, we define a partial determinant
	$$D_{k}(A):=\prod_{i=1}^{k}\sigma_i(A)$$
	to be the product of $k$ greatest singular values of $A$.	
	\end{definition}
	
	By Cauchy interlacing theorem, we have $$\sigma_k(A)=\max_{U,V} \sigma_k\left(U^* A V\right)$$
	and
	$$D_k(A)=\max_{U,V} \left|\det\left(U^* A V\right)\right|$$
	where $U,V$ range over $m\times k$ matrices with orthonormal columns. Additionally, if $A$ is real, we can achieve the maximum when $U, V$ are real; if $A$ is self-adjoint, we can assume $U=V$ by the spectral theorem; if $A$ is real symmetric, we can combine the above two conditions.
	
	The following lemma allows us to make computation with $\mathbb{E}\left(\log D_k (M)\right)$ for each $k=0,1,\ldots, m$. It implies the special case of Theorem \ref{main-2} where $\alpha=\varepsilon=0$.
	\begin{lemma}\label{lemma-finite}	
		Let $M$ be an $m\times m$ random matrix with jointly Gaussian entries. Assume the global small-ball probability bound $P_{0,T}(M)>0$, where $T$ is the matrix type of $M$. Then 
		$$\mathbb{E}\left(\left|\log D_k(M)\right|\right)<\infty$$
		for each $k=0,1,\ldots, m$.
	\end{lemma}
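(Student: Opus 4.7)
The plan is to split $\mathbb{E}\left|\log D_k(M)\right| = \mathbb{E}\log^+ D_k(M) + \mathbb{E}\log^- D_k(M)$ and handle the two pieces separately. The upper part is immediate: $D_k(M) \le \sigma_1(M)^k \le \norm{M}_F^k$, and $\norm{M}_F$, being the norm of a jointly Gaussian vector, has all moments finite, so $\mathbb{E}\log^+ D_k(M) \le k\, \mathbb{E}\log^+ \norm{M}_F < \infty$.

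The substantive step is controlling $\mathbb{E}\log^- D_k(M)$. Using the maximum formula $D_k(M) = \max_{U,V} |\det(U^* M V)|$ recalled just before the lemma, it suffices to exhibit a single pair of deterministic $m \times k$ matrices $U_0, V_0$ with orthonormal columns (and with the additional restriction $U_0 = V_0$ in the symmetric/self-adjoint cases) such that $P(g) := \det(U_0^* M V_0)$, viewed as a polynomial in the Gaussian coefficients of $M$, is not identically zero. To produce such $U_0, V_0$, write $M = M_0 + \sum_{i=1}^n g_i M_i$ with $g_i$ i.i.d.\ standard normal and deterministic $M_i$ of type $T$, and let $S := M_0 + \mathrm{span}(M_1, \ldots, M_n)$. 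For any $(x,y) \in S_{m,T}$, the random variable $x^* M y$ is jointly Gaussian, and the hypothesis $P_{0,T}(M) > 0$ ensures that it is not almost surely zero, so some element of $S$ has $x^* \cdot y \neq 0$. This is precisely the hypothesis of Theorem \ref{o-version-s} applied to $S$, which yields $\bar M \in S$ with $\det \bar M \neq 0$; in particular $\sigma_k(\bar M) > 0$, and the maximum formula then provides the desired $U_0, V_0$ with $\det(U_0^* \bar M V_0) \neq 0$.

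With $U_0, V_0$ fixed, $P(g_1, \ldots, g_n)$ is a nonzero polynomial of degree at most $k$ in $n$ independent standard Gaussians. A standard anti-concentration bound for nonzero polynomials in Gaussians, for instance the Carbery--Wright inequality applied to $|P|^2$, gives $\mathbb{P}(|P(g)| \le \varepsilon) \le C_P\, \varepsilon^{1/k}$, and integrating in $\varepsilon$ yields $\mathbb{E}\log^- |P(g)| < \infty$. Combining with $D_k(M) \ge |P(g)|$ concludes the proof. The step most at risk of going wrong is matching the type constraint on $U_0, V_0$ to the one in Theorem \ref{o-version-s} (in particular the requirement $U_0 = V_0$ in the symmetric/self-adjoint case) so that the appropriate variant of the max formula remains available; but this matching is already encoded in the discussion of $D_k$ via the spectral theorem preceding the lemma, so it amounts to bookkeeping rather than a genuine obstacle.
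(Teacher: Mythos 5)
Your proof is correct, and it takes a genuinely different route from the paper's for the lower tail $\mathbb{E}\log^- D_k(M)$.

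Both arguments handle the upper tail identically ($D_k \le \sigma_1^k$ and sub-Gaussianity of $\sigma_1$), and both invoke Theorem \ref{o-version-s} to produce a point $\bar M$ in the affine support of $M$ with $\det \bar M \neq 0$. From there the paths diverge. The paper sets up an induction over the underlying Gaussian coefficients $g_1,\dots,g_{m^2}$: at step $l$ it freezes a random maximizing pair $U,V$ depending on the partial sum $M''$, diagonalizes $(U^*M_lV)(U^*M''V)^{-1}$, and reduces the single-step increment to a one-dimensional estimate $\sup_{\lambda\in\mathbb{C}}\mathbb{E}\bigl[\max\{-\log|1+(g_l-t_l)\lambda|,0\}\bigr]<\infty$, proved by a short explicit computation. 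The induction hypothesis is also what justifies $\mathbb{P}(D_k(M'')=0)=0$, so the eigenvalues $\lambda_i$ are well-defined almost surely. You instead fix a single deterministic pair $U_0,V_0$ witnessing $D_k(\bar M)>0$, observe that $P(g)=\det(U_0^*MV_0)$ is a not-identically-zero polynomial of degree at most $k$ in $n$ i.i.d.\ real Gaussians, and apply the Carbery--Wright inequality (to $|P|^2$ to handle complex $P$) to get $\mathbb{P}(|P|\le\varepsilon)\le C_P\varepsilon^{1/k}$; integration then kills the log singularity, and $D_k(M)\ge|P(g)|$ transfers the bound. Your approach is shorter and avoids any conditioning or well-posedness checks, at the cost of importing Carbery--Wright, a nontrivial external tool; the paper's argument is longer but entirely self-contained, relying only on elementary one-dimensional Gaussian anti-concentration. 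One small remark: your worry about matching the type constraint on $U_0,V_0$ is unfounded --- the inequality $D_k(M)\ge|\det(U_0^*MV_0)|$ holds for any pair of matrices with orthonormal columns, so the type restriction is irrelevant once $\bar M$ with $\det\bar M\neq 0$ is in hand.
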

	\begin{proof}
		Because the greatest singular value $\sigma_1(M)$ of $M$ is sub-Gaussian, we have
		\begin{align*}
			&\mathbb{E}\left(\max\left \{ \log D_{k}(M) ,0\right\}\right)\\
			= &\mathbb{E}\left(\max\left \{ \sum_{i=1}^{k}\log \sigma_{i}(M) ,0\right\}\right)\\
			\le& k\cdot \mathbb{E}\left(\max\left \{ \log \sigma_{1}(M) ,0\right\}\right) \\
			=&k \int_{0}^{\infty} \mathbb{P}\left( \log  \sigma_{1}(M) \ge t\right)dt\\
			<&\infty.
		\end{align*}
		
		Because $M$ has jointly Gaussian entries, we assume $$M=M_0+ \sum_{i=1}^{m^2} g_i M_i$$
		where $M_0, M_1,\ldots, M_{m^2}$ are  deterministic complex matrices and $g_1,\ldots, g_{m^2}$ are real independent standard normal random variables. Let $S$ be the support of the random matrix $M$. Then $S$ satisfies the condition of Theorem \ref{o-version-s}, so there exists $M'\in S$ with $\det M'\neq 0$. Suppose that $$M'=M_0+\sum_{i=1}^{m^2} t_i M_i$$ where $t_1,\ldots, t_{m^2}$ are real numbers.
		
		We will prove 
		\begin{equation}\label{ind-step}
			\mathbb{E}\left(\max\left\{ -\log D_k\left(M'+\sum_{i=1}^l (g_i-t_i) M_i\right) ,0\right\}\right)<\infty
		\end{equation}
		for each $l=0,1,\ldots, m^2$ by induction on $l$. The base case $l=0$ is trivial because $M'$ is deterministic with $\det M'\neq 0$.
		
		Assume (\ref{ind-step}) holds for $l-1$. Let $$M''=M'+\sum_{i=1}^{l-1} (g_i-t_i) M_i.$$ Let $U,V$ be $m\times k$ matrices with orthonormal columns such that $D_{k}(M'')= \left|\det\left(U^* M'' V\right)\right|.$ Then, by inductive hypothesis, we have $$\mathbb{E}\left( \max \left\{ -\log D_k(M''),0\right\}\right)<\infty,$$ which implies $$\mathbb{P}\left( D_k(M'')=0\right)=0.$$ Let $\lambda_1,\ldots, \lambda_{k}$ be the eigenvalues of $\left( U^* M_l V\right) \left(U^* M'' V\right)^{-1}$. Then we have
		\begin{align*}
			&\mathbb{E}\left(\max\left\{ -\log D_k\left(M''+(g_l-t_l)M_l \right) ,0\right\}\right)\\
			\le &\mathbb{E}\left(\max\left\{ -\log \det \left|U^* M'' V+(g_l-t_l) U^* M_l V \right| ,0\right\}\right)\\
			= &\mathbb{E}\left( \max \left\{ -\log\det \left|U^* M'' V \right|\right.\right.\\
			&\left.\left.-\log \det \left|I+(g_l-t_l) \left(U^* M_l V\right)\left(U^* M'' V\right)^{-1} \right| ,0  \right\} \right)\\
			= &\mathbb{E}\left( \max \left\{ -\log D_k(M'')-\sum_{i=1}^l\log\left| 1+(g_l-t_l)\lambda_i\right| ,0 \right\}\right)\\
			\le &\mathbb{E}\left( \max \left\{ -\log D_k(M''),0\right\}\right) +\sum_{i=1}^l\mathbb{E}\left( \max \left\{-\log\left| 1+(g_l-t_l)\lambda_i\right| ,0 \right\}\right)\\
			\le &\mathbb{E}\left( \max \left\{ -\log D_k(M''),0\right\}\right)+ l\cdot \sup_{\lambda\in \mathbb{C}} \mathbb{E}\left( \max \left\{-\log\left| 1+(g_l-t_l)\lambda\right| ,0 \right\}\right).
		\end{align*}
		We have
		\begin{align*}
			&\sup_{\lambda\in \mathbb{C}}\mathbb{E}\left( \max \left\{-\log\left| 1+(g_l-t_l)\lambda \right| ,0 \right\}\right)\\
			\le &\sup_{\lambda\in \mathbb{C}}\mathbb{E}\left( \max \left\{-\log\left|\Re\left( 1+(g_l-t_l)\lambda \right)\right| ,0 \right\}\right)\\
			= &\sup_{\lambda\in \mathbb{C}}\mathbb{E}\left( \max \left\{-\log\left| 1+(g_l-t_l)\Re(\lambda) \right| ,0 \right\}\right)\\
			= &\sup_{\lambda\in \mathbb{R}}\mathbb{E}\left( \max \left\{-\log\left| 1+(g_l-t_l)\lambda\right|,0 \right\}\right)\\
			=&\sup_{\lambda\in \mathbb{R}} \int_{0}^{\infty} \mathbb{P}\left( \left| 1+(g_l-t_l)\lambda\right| \le \exp(-t)\right)dt\\
			=&\max\left\{  \sup_{\lambda\in \mathbb{R},|t_l \lambda|>\frac{1}{4}} \int_{0}^{\infty} \mathbb{P}\left( \left| 1-t_l \lambda+ g_l\lambda \right| \le \exp(-t)\right)dt,\right.\\
			&\left. 1+\sup_{\lambda\in \mathbb{R},|t_l \lambda|\le \frac{1}{4}} \int_{1}^{\infty} \mathbb{P}\left( \left| 1-t_l \lambda+ g_l\lambda\right| \le \exp(-t)\right)dt \right\}\\
			\le& \max\left\{  \sup_{\lambda\in \mathbb{R},|t_l \lambda|>\frac{1}{4}} \int_{0}^{\infty} \frac{\exp(-t)}{ \sqrt{2\pi} |\lambda|} dt,\right.\\
			&\left.1+ \sup_{\lambda\in \mathbb{R},|t_l \lambda|\le \frac{1}{4}} \int_{1}^{\infty}   \frac{\exp(-t)}{ \sqrt{2\pi} |\lambda|}\exp\left(-\frac{1}{32\lambda^2}\right) dt \right\}\\
			\le& \max\left\{  2\sqrt{\frac{2}{\pi}}|t_l|,1+\frac{1}{\sqrt{2\pi}e}\sup_{\lambda\in \mathbb{R}}\frac{1}{ |\lambda|}\exp\left(-\frac{1}{32\lambda^2}\right)\right\}\\
			\le& \max\left\{ 2\sqrt{\frac{2}{\pi}}|t_l|,1+\frac{1}{\sqrt{2\pi}e}\sup_{\lambda\in \mathbb{R}}\frac{1}{ |\lambda|}\left(1+\frac{1}{32\lambda^2}\right)^{-1}\right\}\\
			\le& \max\left\{ 2\sqrt{\frac{2}{\pi}}|t_l|,1+\frac{2}{\sqrt{\pi}e}\right\}\\
			<&\infty.
		\end{align*}
		Thus we have $$\mathbb{E}\left(\max\left\{ -\log D_k\left(M''+(g_l-t_l)M_l \right) ,0\right\}\right)<\infty.$$ 
		
		By the principle of induction, we have
		\begin{align*}
			&\mathbb{E}\left(\max\left\{ -\log D_k(M) ,0\right\}\right)
	\\
	=&\mathbb{E}\left(\max\left\{ -\log D_k\left(M'+\sum_{i=1}^{m^2} (g_i-t_i) M_i\right) ,0\right\}\right)\\
	<&\infty.
		\end{align*}
		Therefore, we have 
		\begin{align*}&\mathbb{E}\left(\left|\log D_{k}(M)\right|\right)\\
			=&\mathbb{E}\left(\max\left \{ \log D_{k}(M) ,0\right\}\right)+\mathbb{E}\left(\max\left \{ -\log D_{k}(M) ,0\right\}\right)\\
			<&\infty.\end{align*}\end{proof}
	
	Let $u_0$ be a positive real number to be determined later. Let $u$ be a random variable with probability density function
	\begin{equation}\label{defu}
		f_u(t)=\begin{cases}
			\frac{1}{2} u_0 t^{-\frac{3}{2}}&, \mbox{ if } t> u_0^2,\\
			0&, \mbox{ if } t< u_0^2.
		\end{cases}
	\end{equation}
	
	The following inequality is the key ingredient in the proof of Theorem \ref{main-2}.
	
	\begin{lemma}\label{lemma-monotone-scalar}
		Let $u$ be the random variable defined by (\ref{defu}). For every complex number $\lambda$ we have
		$$\mathbb{E} \left(\log |1+u \lambda|\right)\ge 0.$$
	\end{lemma}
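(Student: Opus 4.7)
The plan is to make two explicit substitutions that turn the integral into a classical logarithmic potential on the interval $[-1,1]$, and then apply a minimum principle. First I would substitute $x = u_0/\sqrt{t}$ in the density, which maps $t\in(u_0^2,\infty)$ bijectively onto $x\in(0,1)$ and pushes $\tfrac{1}{2} u_0 t^{-3/2}\,dt$ to Lebesgue measure $dx$ on $(0,1)$. Combined with the identity $|1+t\lambda| = |x^2+u_0^2\lambda|/x^2$ and $\int_0^1 \log x\,dx = -1$, this yields
\begin{equation*}
\mathbb{E}\log|1+u\lambda| \;=\; \int_0^1 \log\left|1+\tfrac{u_0^2\lambda}{x^2}\right|dx \;=\; \int_0^1 \log|x^2+\mu|\,dx + 2,
\end{equation*}
where $\mu := u_0^2\lambda$ ranges over all of $\mathbb{C}$. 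In particular $u_0$ drops out of the remaining analysis, and the claim reduces to showing $\int_0^1 \log|x^2+\mu|\,dx \ge -2$ for every $\mu\in\mathbb{C}$.

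Next I would choose $z\in\mathbb{C}$ with $z^2=-\mu$ and factor $x^2+\mu=(x-z)(x+z)$. Substituting $t=-x$ in the integral of $\log|x+z|$ gives
\begin{equation*}
\int_0^1 \log|x^2+\mu|\,dx \;=\; \int_0^1 \log|x-z|\,dx + \int_{-1}^0 \log|t-z|\,dt \;=\; \int_{-1}^1 \log|t-z|\,dt \;=:\; N(z),
\end{equation*}
so it suffices to prove $N(z)\ge -2$ for every $z\in\mathbb{C}$.

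Finally I would analyze $N$ by classical potential theory. The function $N$ is continuous on $\mathbb{C}$ (the logarithmic singularities are integrable), harmonic on $\mathbb{C}\setminus[-1,1]$ (as an integral over $t$ of functions harmonic in $z$), and satisfies $N(z)=2\log|z|+O(1)$ as $|z|\to\infty$. Applying the minimum principle on bounded domains of the form $\{|z|<R\}\setminus[-1,1]$ with $R$ large shows that the infimum of $N$ over $\mathbb{C}$ is attained on $[-1,1]$; for real $c\in[-1,1]$ a direct computation splitting the integral at $t=c$ gives
\begin{equation*}
N(c) = (1+c)\log(1+c) + (1-c)\log(1-c) - 2,
\end{equation*}
whose derivative $\log\tfrac{1+c}{1-c}$ vanishes only at $c=0$, yielding the global minimum $N(0)=-2$. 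The only delicate point in the whole argument is the justification of this minimum principle on the unbounded complement of $[-1,1]$, which follows routinely from the growth of $N$ at infinity and its continuity across the segment; everything else is elementary one-variable calculus.
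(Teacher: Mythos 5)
Your proof is correct, and it takes a genuinely different route from the paper's. The paper first reduces to the real axis via the pointwise inequality $\log|1+u\lambda|\ge\log|1+u\Re(\lambda)|$ (since $|1+u\lambda|^2=(1+u\Re\lambda)^2+(u\Im\lambda)^2$), observes that the claim is trivial unless $-2u_0^{-2}<\Re\lambda<0$, and in that range rescales $t\mapsto t/|\Re\lambda|$ so that the problem becomes $\int_0^a t^{-3/2}\log|1-t|\,dt\le 0$ for $0<a<2$, which holds because the remaining integrand is pointwise nonpositive on $(0,2)$ (after using the global identity $\int_0^\infty t^{-3/2}\log|1-t|\,dt=0$). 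Your argument instead changes variables $x=u_0/\sqrt{t}$ to identify the expectation, up to an additive constant $2$, with the logarithmic potential $N(z)=\int_{-1}^1\log|t-z|\,dt$ of normalized Lebesgue measure on the segment, and then uses the minimum principle for harmonic functions together with the logarithmic growth at infinity to localize the infimum to $[-1,1]$, where a direct one-dimensional computation closes the argument. Both proofs are valid; the paper's is more elementary and self-contained (all steps are freshman calculus plus a sign observation), while yours is more conceptual, makes the role of $u_0$ transparent (it scales out immediately), and explains why the bound is sharp exactly at $\lambda=0$ since $N(0)=-2$ is the unique global minimum of a convex restriction. The minimum-principle step you flag as the delicate point is indeed the only piece requiring care, but your justification — continuity across the slit, harmonicity off $[-1,1]$, and $N(z)=2\log|z|+O(1)$ — is adequate.
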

	\begin{proof}
		If $\Re\left(\lambda\right)\ge 0$ or $\Re\left(\lambda\right)\le -2u_0^{-2}$, then we have
		\begin{align*}
			&\mathbb{E} \left(\log \left|1+ u\lambda\right|\right)\\
			\ge &\mathbb{E} \left(\log \left|1+ u\Re\left(\lambda\right)\right|\right)\\
			\ge& 0.
		\end{align*}
		
		If $-2u_0^{-2}<\Re\left(\lambda\right)< 0$, then we have
		\begin{align*}	
			&\mathbb{E} \left(\log \left|1+ u\lambda\right|\right)\\
			\ge &\mathbb{E} \left(\log \left|1+ u\Re\left(\lambda\right)\right|\right)\\
			=&\frac{1}{2} u_0\int_{u_0^2}^{\infty} t^{-\frac{3}{2}} \log\left| 1+  t \Re\left(\lambda\right)  \right|dt \\
			=&\frac{1}{2} u_0 \sqrt{- \Re\left(\lambda\right)} \int_{-u_0^2\Re\left(\lambda\right)}^{\infty} t^{-\frac{3}{2}} \log\left| 1- t \right|dt\\
			=&-\frac{1}{2} u_0\sqrt{- \Re\left(\lambda\right)} \int_0^{-u_0^2\Re\left(\lambda\right)} t^{-\frac{3}{2}} \log\left| 1- t \right|dt\\
			\ge &0.
		\end{align*}
	\end{proof}
	
	We introduce a matrix version of Lemma \ref{lemma-monotone-scalar}.
	
	\begin{lemma}\label{lemma-monotone}
		Let $u$ be the random variable defined by (\ref{defu}). Assume that $0\le k\le m$. Let $A_0,A_1,\ldots, A_n$ be $m\times m$ deterministic complex matrices with $D_{k}(A_0)\neq 0$. Then we have 
		$$\mathbb{E}\left(\log  D_{k} \left(\sum_{i=0}^n u^i A_i\right) \right)\ge \log D_{k}(A_0).$$
	\end{lemma}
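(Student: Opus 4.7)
The plan is to reduce the matrix inequality to the scalar statement of Lemma~\ref{lemma-monotone-scalar} via the variational characterization of $D_k$ and a polynomial factorization, then apply the scalar lemma root by root.

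First I would pick $m\times k$ matrices $U_0,V_0$ with orthonormal columns that realize the maximum in $D_k(A_0)=\max_{U,V}|\det(U^* A_0 V)|$; since $D_k(A_0)\neq 0$, the $k\times k$ matrix $U_0^* A_0 V_0$ is nonsingular. By the same variational formula applied to the random matrix, for every value $u$,
\[
D_k\!\left(\sum_{i=0}^n u^i A_i\right)\ge \left|\det\!\left(U_0^*\sum_{i=0}^n u^i A_i V_0\right)\right|=|p(u)|,
\]
where $p(z):=\det\!\left(\sum_{i=0}^n z^i U_0^* A_i V_0\right)\in\mathbb{C}[z]$. Its constant term satisfies $|p(0)|=|\det(U_0^* A_0 V_0)|=D_k(A_0)>0$, so $p$ is a nonzero polynomial of some degree $d\le kn$.

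Next I would factor $p$ over $\mathbb{C}$. Writing its roots as $z_1,\dots,z_d$ (all nonzero since $p(0)\neq 0$) and setting $\lambda_j:=-1/z_j$ gives
\[
p(z)=p(0)\prod_{j=1}^d(1+z\lambda_j),\qquad \log|p(u)|=\log|p(0)|+\sum_{j=1}^d \log|1+u\lambda_j|.
\]
By Lemma~\ref{lemma-monotone-scalar}, $\mathbb{E}(\log|1+u\lambda_j|)\ge 0$ for every complex $\lambda_j$, so summing over $j$ and chaining with the variational lower bound yields
\[
\mathbb{E}\!\left(\log D_k\!\left(\sum_{i=0}^n u^i A_i\right)\right)\ge \mathbb{E}(\log|p(u)|)\ge \log|p(0)|=\log D_k(A_0).
\]

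The only point that requires a little care is the additivity of expectation across the finite sum $\sum_{j=1}^d \log|1+u\lambda_j|$, because individual summands can equal $-\infty$ on a measure-zero subset (the zeros of $p$). This is harmless: Lemma~\ref{lemma-monotone-scalar} already delivers each single-term expectation as a well-defined nonnegative number, so linearity of expectation applies term by term, and the chain of inequalities above goes through. I do not foresee any other obstacle; the creative step is the reduction to one scalar polynomial $p(z)$, after which the scalar anti-concentration lemma does all the work.
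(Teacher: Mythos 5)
Your proposal is correct and is essentially the paper's own proof: the polynomial $p(z)$ you construct is (after normalizing by $p(0)$) exactly the polynomial $f(x)=\det\bigl(I+\sum_{i=1}^n x^i (U^*A_iV)(U^*A_0V)^{-1}\bigr)$ used there, and both arguments apply Lemma~\ref{lemma-monotone-scalar} root by root after the same variational reduction. Your extra remarks on the degree being possibly less than $kn$ and on the harmlessness of the $-\infty$ values are minor clarifications the paper leaves implicit, but do not change the structure of the argument.
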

	\begin{proof}
		Let $U,V$ be $m\times k$ matrices with orthonormal columns such that $D_{k}(A_0)= \left|\det\left(U^* A_0 V\right)\right|.$ Let $-\frac{1}{\lambda_1},\ldots,-\frac{1}{\lambda_{kn}}$ be the roots of the polynomial $$f(x):=\det\left(I+ \sum_{i=1}^n x^i\left( U^* A_i V\right) \left(U^* A_0 V\right)^{-1}\right).$$
		By Lemma \ref{lemma-monotone-scalar}, we have
		\begin{align*}
			&\mathbb{E}\left(\log  D_{k} \left(\sum_{i=0}^n u^i A_i\right) \right)- \log D_{k}(A)\\
			=&\mathbb{E}\left(\log  D_{k} \left(\sum_{i=0}^n u^i A_i\right) \right)-\log \left|\det\left(U^* A_0 V\right)\right|\\
			\ge&\mathbb{E}\left(\log\left|\det\left(U^* \left(\sum_{i=0}^n u^i A_i\right) V\right)\right| \right)- \log\left|\det\left(U^* A_0 V\right)\right|\\
			=&\mathbb{E}\left(\log\left|\det\left(I+ \sum_{i=1}^n u^i\left( U^* A_i V\right) \left(U^* A_0 V\right)^{-1}\right)\right| \right)\\
			=&\sum_{i=1}^{k n}\mathbb{E}\left( \log\left|1+u \lambda_i\right|\right)\\
			\ge &0.
		\end{align*}
	\end{proof}
	
	The following lemma provides another lower bound of the increment in the expectation, given that a singular value of $A_0$ is small.
	
	\begin{lemma}\label{lemma-positive}
		 Let $A_0$ and $A_2$ be two $m\times m$ deterministic matrices $A_0, A_2$ with $$0<\sigma_{m}(A_0)\le \frac{u_0^2\alpha^2}{2m s}.$$ Let $M$ be an $m\times m$ random matrix. Assume the global small-ball probability bound $P_{2\alpha,T}(M)\ge \frac{1}{4}$, where $T$ is the matrix type of $A_0$. Let $u$ be a random variable defined by (\ref{defu}), independent with $M$.
		 Then we have 
		\begin{align*}
			&\mathbb{E}\left(\log  D_{m} \left(A_0+u M+u^2 A_2\right) +\log D_{m-1} \left(A_0+u M+u^2 A_2\right) 
			\right)\\
			\ge&\log D_{m}(A_0) + \log D_{m-1}(A_0)  +\frac{1}{2}-2\cdot\mathbb{P}\left(\sigma_1(M)\ge s\right),
		\end{align*}
	given that the expectation exists.
	\end{lemma}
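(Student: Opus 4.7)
The plan is to expose a one-dimensional ``Schur block'' that carries the small-ball information via the eigen- or singular-value decomposition of $A_0$, and to locate a single large generalized eigenvalue through a trace identity. Choose $A_0 = UDV^*$ with $D$ diagonal and $|D_{kk}| = \sigma_k(A_0)$, taking $U,V$ real when $T$ is real or real symmetric, and $V = U$ when $T$ is self-adjoint or real symmetric. Let $U' = [u_1|\cdots|u_{m-1}]$, $V' = [v_1|\cdots|v_{m-1}]$, and $u^\perp = u_m$, $v^\perp = v_m$. Then $(u^\perp,v^\perp) \in S_{m,T}$, so the hypothesis $P_{2\alpha,T}(M) \ge \tfrac{1}{4}$ translates into $\mathbb{P}(|\lambda| > 2\alpha) \ge \tfrac{1}{4}$ with $\lambda := u^{\perp *} M v^\perp$.

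Since $D_m(A) = |\det A|$ and $D_{m-1}(A) \ge |\det U'^* A V'|$,
$$\log D_m(A) + \log D_{m-1}(A) \ge \log|\det A| + \log|\det U'^* A V'|.$$
Mimicking the factorization from the proof of Lemma~\ref{lemma-monotone}, I would write
$$|\det A| = D_m(A_0)\prod_{i=1}^{d}|1+u\mu_i|, \qquad |\det U'^* A V'| = D_{m-1}(A_0)\prod_{j=1}^{d'}|1+u\nu_j|,$$
where $-1/\mu_i$ (resp.\ $-1/\nu_j$) are the roots of $\det(I + xD^{-1}U^*MV + x^2 D^{-1}U^*A_2 V)$ (resp.\ of the $(m-1)\times(m-1)$ analogue with $D', U', V'$), with $d \le 2m$ and $d' \le 2m-2$. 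Lemma~\ref{lemma-monotone-scalar} ensures that every $\mathbb{E}_u[\log|1+u\mu_i|]$ and $\mathbb{E}_u[\log|1+u\nu_j|]$ is nonnegative, already recovering the baseline bound of Lemma~\ref{lemma-monotone}.

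The extra $\tfrac{1}{2}$ comes from a trace identity. Reading the coefficient of $x$ in each polynomial and applying Vieta,
$$\sum_{i=1}^{d}\mu_i - \sum_{j=1}^{d'}\nu_j = \mathrm{tr}(D^{-1}U^*MV) - \mathrm{tr}(D'^{-1}U'^*MV') = \frac{\lambda}{D_{mm}}.$$
On the event $\mathcal{E} := \{|\lambda| > 2\alpha,\ \sigma_1(M) < s\}$, the right-hand side has modulus at least $2\alpha/\sigma_m(A_0) \ge 4ms/(u_0^2 \alpha)$, using $\sigma_m(A_0) \le u_0^2\alpha^2/(2ms)$. Since there are at most $4m-2$ terms on the left, the triangle inequality produces some $\mu^* \in \{\mu_i\} \cup \{\nu_j\}$ with $|\mu^*| \ge s/(u_0^2\alpha)$. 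I may assume $s \ge 2\alpha$: otherwise $|\lambda|\le \sigma_1(M)$ forces $\mathbb{P}(\sigma_1(M)\ge s) \ge \tfrac{1}{4}$, making $\tfrac{1}{2} - 2\mathbb{P}(\sigma_1(M) \ge s) \le 0$, and the claim follows from Lemma~\ref{lemma-monotone} alone. Under $s \ge 2\alpha$, $u_0^2|\mu^*|\ge 2$, hence $|1+u\mu^*| \ge u|\mu^*|/2$ throughout the support of $u$. A direct integration gives $\mathbb{E}_u[\log u] = 2\log u_0 + 2$, so
$$\mathbb{E}_u[\log|1+u\mu^*|] \ge \mathbb{E}_u[\log u] + \log|\mu^*| - \log 2 \ge (2\log u_0 + 2) + (\log 2 - 2\log u_0) - \log 2 = 2.$$

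Because every $\mathbb{E}_u[\log|1+u\mu_i|]$ and $\mathbb{E}_u[\log|1+u\nu_j|]$ is nonnegative, on $\mathcal{E}$ the total of these expectations is at least $\mathbb{E}_u[\log|1+u\mu^*|] \ge 2$, while on $\mathcal{E}^c$ it is at least $0$. Taking expectation over $M$ and using $\mathbb{P}(\mathcal{E}) \ge \tfrac{1}{4} - \mathbb{P}(\sigma_1(M)\ge s)$ delivers the extra $\tfrac{1}{2} - 2\mathbb{P}(\sigma_1(M)\ge s)$ above the baseline $\log D_m(A_0) + \log D_{m-1}(A_0)$. The main conceptual steps are the trace identity and the pigeonhole extraction of a single large $\mu^*$; the subtlest bookkeeping is ensuring the basis choice is simultaneously compatible with the diagonalization of $A_0$ and with the type-$T$ constraints defining $S_{m,T}$, which the spectral theorem handles for self-adjoint or symmetric $A_0$.
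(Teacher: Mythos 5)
Your proof is correct, and the route is genuinely different from the paper's. The paper also works in the singular/eigenvector frame of $A_0$ and tries to locate a large root of the relevant characteristic polynomial, but it uses only the single trace $\mathrm{tr}(MA_0^{-1}) = \sum_k x_k^* M y_k/\sigma_k(A_0)$, which can suffer cancellation; it therefore splits into a ``no cancellation'' event $E_0$, on which the trace is forced to be large and yields a large root of the $D_m$-polynomial $f_1$, and events $E_1,\ldots,E_{m-1}$, on which a carefully built test pair $U_{\pm 1}, V_{\pm 1}$ (mixing $x_k,x_m$ and $y_k,y_m$) produces an $(m-1)\times(m-1)$ polynomial $f_2$ with a forced root at $x=0$ and a large linear coefficient, delivering the gain on $D_{m-1}$. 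Your difference-of-traces identity $\sum\mu_i - \sum\nu_j = \mathrm{tr}(D^{-1}U^*MV) - \mathrm{tr}(D'^{-1}U'^*MV') = \lambda/D_{mm}$ sidesteps the cancellation issue entirely: all diagonal contributions with index $< m$ cancel, leaving precisely the small-ball term. The pigeonhole over the at most $4m-2$ roots $\{\mu_i\}\cup\{\nu_j\}$ then always extracts a $\mu^*$ with $|\mu^*| \ge s/(u_0^2\alpha) \ge 2u_0^{-2}$ (once $s<2\alpha$ is discharged, as you do), and the nonnegativity of the remaining $\mathbb{E}_u\log|1+u\,\cdot\,|$ terms from Lemma~\ref{lemma-monotone-scalar} gives the same numerical gain of $2$ per occurrence of $\mathcal{E}$ that the paper gets per $E_j$. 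This removes both the case split and the $U_{\pm1},V_{\pm1}$ construction, using only the plain truncations $U',V'$ for the $D_{m-1}$ lower bound; the mild tradeoff is that you must track both polynomials jointly rather than gaining on $D_m$ or $D_{m-1}$ separately. The frame choice compatible with $S_{m,T}$, the computation $\mathbb{E}_u[\log u] = 2\log u_0 + 2$, and the reduction to $s\ge 2\alpha$ are all handled correctly.
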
	
	\begin{proof}
		
		Let $x_1, \ldots, x_m$ and $y_1,\ldots, y_m$ be the the left-singular vectors and right-singular vectors of $A_0$ with $A_0 y_i= \sigma_i(A_0) x_i$ for $i=1,\ldots,m$. If $A_0$ is real, we additionally assume that $x_1, \ldots, x_m$ and $y_1,\ldots, y_m$ are real; If $A_0$ is self-adjoint, we additionally assume that $x_i=y_i$ or $x_i=-y_i$ for each $i=1,\ldots,m$. 
		
		Let $E_0$ denote the event that $\sigma_1(M)< s,$ $\left| x_m^* M y_m\right|\ge 2\alpha$ and $$\Re\left(\frac{\sigma_{m}(A_0)x_k^* M y_k}{\sigma_{k}(A_0) x_m^* M y_m}\right)\ge -\frac{1}{2m}$$ for each $k=1,\ldots,m-1$. When $E_0$ occurs, we have
		\begin{align*}
			 &\left|\Tr{M A_0^{-1} }\right|\\
			=&\left|\sum_{i=1}^m  x_i^* M A_0^{-1} x_i\right|\\
			=&\frac{\left|x_m^* M y_m\right|}{\sigma_m(A_0)} \left|1+\sum_{i=1}^{m-1} \frac{\sigma_{m}(A_0)x_i^* M y_i}{\sigma_{i}(A_0) x_m^* M y_m}\right|\\
			\ge & \frac{\left|x_m^* M y_m\right|}{\sigma_m(A_0)} \Re \left(1+\sum_{i=1}^{m-1} \frac{\sigma_{m}(A_0)x_i^* M y_i}{\sigma_{i}(A_0) x_m^* M y_m}\right)\\
			\ge &\frac{\left|x_m^* M y_m\right|}{2\sigma_m(A_0)}\\
			= &\frac{\left|x_m^* M y_m\right|^2}{2\sigma_m(A_0)\left|x_m^* M y_m\right|}\\
			\ge &\frac{2\alpha^2}{s\sigma_m(A_0)}\\
			\ge &\frac{4m}{u_0^2}.
		\end{align*}
	
		Let $-\frac{1}{\lambda_1},\ldots, -\frac{1}{\lambda_{2m}}$ be the roots of the polynomial $$f_1(x):=\det\left(I+ x M A_0^{-1}+x^2 A_2 A_0^{-1} \right).$$ When $E_0$ occurs, we have
		\begin{align*}
			&\max_{1\le i\le 2m}|\lambda_i|\\
			\ge&\frac{1}{2m}\left|\sum_{i=1}^{2m}\lambda_i\right|\\
			 = & \frac{1}{2m}\left|\Tr{MA_0^{-1}}\right|\\
			\ge& 2u_0^{-2}.
		\end{align*}
		By Lemma \ref{lemma-monotone-scalar}, when $E_0$ occurs, we have
		\begin{align*}
			&\mathbb{E}\left(\log  D_{m} \left(A_0+u M+u^2 A_2\right)\;\middle|\; M\right) - \log D_{m}(A_0)\\
			=&\mathbb{E}\left( \log\left|\det\left(A_0+u M+u^2 A_2\right) \right|\;\middle|\; M\right)-\log\left|\det A_0 \right|\\
			=&\mathbb{E}\left(\log\left|\det\left(I+u M A_0^{-1} +u^2 A_2 A_0^{-1}\right)\right|\;\middle|\; M\right)\\
			=&\sum_{i=1}^{2m} \mathbb{E}\left(\log|1+u \lambda_i |\;\middle|\; M\right)\\
			\ge &\inf_{|\lambda|\ge 2u_0^{-2}} \mathbb{E}\left(\log|1+u \lambda | \right)\\
			\ge &\inf_{|\lambda|\ge 2u_0^{-2}} \mathbb{E}\left(\log\frac{|u \lambda|}{2} \right)\\
			= &2.
		\end{align*}
		
		For $k=1,\ldots, m-1$, let $E_k$ denote the event that $\sigma_1(M)< s,$ $\left|x_m^* M y_m\right|\ge 2\alpha$ and $$\Re\left(\frac{\sigma_{m}(A_0)x_k^* M y_k}{\sigma_{k}(A_0) x_m^* M y_m}\right)< -\frac{1}{2m}.$$
		For $\varepsilon_0=\pm 1$, let $U_{\varepsilon_0}$ denote the $m\times (m-1)$ matrix with columns $x_1,\ldots, x_{k-1}$, $x_{k+1},\ldots, x_{m-1}$ and $$\frac{x_k+\varepsilon_0 x_m}{\sqrt{2}},$$ and let $V_{\varepsilon_0}$ denote the $m\times (m-1)$ matrix with columns $y_1,\ldots, y_{k-1}$, $y_{k+1},\ldots, y_{m-1}$ and $$\frac{\sigma_m(A_0)y_k -\varepsilon_0\sigma_k(A_0) y_m }{\sqrt{\sigma_m(A_0)^2 +\sigma_k(A_0)^2}}.$$
		Then $U_{\varepsilon_0}$ and $V_{\varepsilon_0}$ have orthonormal columns, and $U_{\varepsilon_0}^* A_0 V_{\varepsilon_0}$ is a diagonal matrix with the lower right entry being zero. The constant and linear terms of the polynomial $$f_2(x):=\frac{1}{2}\det\left(U_{1}^*\left(A_0+u M+u^2 A_2\right) V_{1}\right)+\frac{1}{2}\det\left(U_{-1}^*\left(A_0+u M+u^2 A_2\right) V_{-1}\right)$$ are zero and
		$$\frac{D_{m-1}(A_0) \left(\sigma_k(A_0)x_m^* M y_m-\sigma_m(A_0) x_k^* M y_k\right)} {\sigma_k(A_0)\sqrt{2\sigma_k(A_0)^2+2\sigma_m(A_0)^2}} x.$$
		When $E_k$ occurs, we have
		\begin{align*}
			&\frac{D_{m-1}(A_0) \left|\sigma_k(A_0)x_m^* M y_m-\sigma_m(A_0) x_k^* M y_k\right|} {\sigma_k(A_0)\sqrt{2\sigma_k(A_0)^2+2\sigma_m(A_0)^2}}\\
			=&\frac{D_{m-1}(A_0) \left|x_m^* M y_m
			\right|}{\sqrt{2\sigma_k(A_0)^2+2\sigma_m(A_0)^2}}\left|1-\frac{\sigma_{m}(A_0)x_k^* M y_k}{\sigma_{k}(A_0) x_m^* M y_m}\right|\\
			\ge& \frac{D_{m-1}(A_0) \left|x_m^* M y_m
				\right|}{\sqrt{2\sigma_k(A_0)^2+2\sigma_m(A_0)^2}}\Re\left(1-\frac{\sigma_{m}(A_0)x_k^* M y_k}{\sigma_{k}(A_0) x_m^* M y_m}\right)\\
			\ge& \frac{D_{m-1}(A_0) \left|x_m^* M y_m
				\right|}{ 2\sigma_k(A_0)}\\
			=&\frac{D_{m-1}(A_0) \left|x_m^* M y_m
				\right|^2}{ 2\sigma_m(A_0)\left|x_k^* M y_k\right|}\left|\frac{\sigma_{m}(A_0)x_k^* M y_k}{\sigma_{k}(A_0) x_m^* M y_m}\right|\\
			\ge&\frac{D_{m-1}(A_0) \left|x_m^* M y_m
				\right|^2}{ 2\sigma_m(A_0)\left|x_k^* M y_k\right|}\Re\left(-\frac{\sigma_{m}(A_0)x_k^* M y_k}{\sigma_{k}(A_0) x_m^* M y_m}\right)\\
			\ge&\frac{D_{m-1}(A_0) \left|x_m^* M y_m
				\right|^2}{ 4 m \sigma_m(A_0)\left|x_k^* M y_k\right|}\\
			\ge &\frac{D_{m-1}(A_0)\alpha^2}{ m s \sigma_m(A_0)}\\
			\ge &\frac{2 D_{m-1}(A_0)}{u_0^2}.
		\end{align*}
		Let $0,-\frac{1}{\lambda_1},\ldots, -\frac{1}{\lambda_{2m-1}}$ be the roots of the polynomial $f_2(x)$. Then by Lemma \ref{lemma-monotone-scalar}, when $E_k$ occurs, we have
		\begin{align*}
			&\mathbb{E}\left(\log  D_{m-1} \left(A_0+u M+u^2 A_2\right)\;\middle|\; M \right)\\
			\ge &\mathbb{E}\left(\max\left\{ \log\left|\det\left(U_1^*\left(A_0+u M+u^2 A_2\right) V_1\right)\right|,\right.\right.\\
			&\left.\left.\log\left|\det\left(U_{-1}^*\left(A_0+u M+u^2 A_2\right) V_{-1}\right)\right|\right\}\;\middle|\; M \right)\\
			\ge& \mathbb{E}\left( \log\left|\frac{1}{2}\det\left(U_{1}^*\left(A_0+u M+u^2 A_2\right) V_{1}\right)\right.\right.\\
			&\left.\left.+\frac{1}{2}\det\left(U_{-1}^*\left(A_0+u M+u^2 A_2\right) V_{-1}\right) \right|\;\middle|\; M \right)\\
			=&\mathbb{E}\left(\log\left(\frac{D_{m-1}(A) \left|\sigma_k(A)x_m^* M y_m-\sigma_m(A) x_k^* M y_k\right|} {\sigma_k(A)\sqrt{2\sigma_k(A)^2+2\sigma_m(A)^2}} \right)\;\middle|\; M \right)\\
			&+\mathbb{E}\left(\log u\right)+\sum_{i=1}^{2m-1} \mathbb{E}\left(\log|1+u \lambda_i | \;\middle|\; M\right)\\
			\ge &\log\left(\frac{2 D_{m-1}(A_0)}{u_0^2} \right) +\mathbb{E}\left(\log u\right)\\
			=&\log D_{m-1}(A_0)+ 2 + \log 2.
		\end{align*}
		
		Therefore, by Lemma \ref{lemma-monotone}, we have 
		\begin{align*}
			&\mathbb{E}\left(\log  D_{m} \left(A_0+u M+u^2 A_2\right) +\log D_{m-1} \left(A_0+u M+u^2 A_2\right)
			\right)\\
			\ge &\log D_{m}(A_0) + \log D_{m-1}(A_0)+ 2\cdot\mathbb{P}\left(\bigcup\limits_{i=0}^{m-1} E_i \right) \\
			= & \log D_{m}(A_0) + \log D_{m-1}(A_0)+ 2\cdot\mathbb{P}\left( \sigma_1(M)< s, \left| x_m^* M y_m\right|\ge 2\alpha\right)\\
			\ge&\log D_{m}(A_0) + \log D_{m-1}(A_0)+\frac{1}{2}-2\cdot\mathbb{P}\left( \sigma_1(M)\ge s\right).
		\end{align*}
	\end{proof}
	
	Now we can prove Theorem \ref{main-2} by Markov's inequality.
	
	\begin{proof}[Proof of Theorem \ref{main-2}]
		Let $M'$ be an independent copy of $M$. Because $M$ has jointly Gaussian entries, we have 
		$$(1+2t)M+(2t+2 t^2) M' -2t\cdot \mathbb{E}(M)\sim (1+2t+2t^2) M$$ 
		for every real number $t$. 
		
		By Definition \ref{def-gs}, we have
		\begin{align*}
			&P_{2\alpha,T}\left(2M+2M'-2\cdot \mathbb{E}(M)\right)\\
			=&\inf_{x,y}\mathbb{P} \left(\left|x^* \left(M+M'-\mathbb{E}(M)\right) y\right|> \alpha \right)\\
			\ge& \inf_{x,y}\left(\mathbb{P} \left(\left|x^* M' y\right|> \alpha \right) \mathbb{P}\left(\left|x^* \left(M+M'-\mathbb{E}(M)\right) y\right|\ge \left|x^* M' y\right| \;\middle|\;\left|x^* M' y\right|> \alpha \right)\right) \\
			\ge& P_{\alpha, T}\left(M'\right)\inf_{x,y} \mathbb{P}\left(\left|x^* \left(M+M'-\mathbb{E}(M)\right) y\right|\ge \left|x^* M' y\right| \;\middle|\;\left|x^* M' y\right|> \alpha \right) \\
			=& P_{\alpha, T}\left(M\right)\inf_{x,y} \mathbb{P}\left(\left|1+\frac{x^* \left(M-\mathbb{E}(M)\right) y}{\left|x^* M' y\right|}\right|\ge 1  \;\middle|\;\left|x^* M' y\right|> \alpha \right) \\
			\ge& P_{\alpha, T}\left(M\right)\inf_{x,y}  \mathbb{P}\left(\Re\left(x^* \left(M-\mathbb{E}(M)\right) y \right)\ge 0 \;\middle|\;\left|x^* M' y\right|> \alpha \right)\\
			\ge& \frac{1}{2}P_{\alpha, T}\left(M\right)\\
			\ge& \frac{1}{4},
		\end{align*}
		where $(x,y)$ ranges over $S_{m,T}$.
		
		By Lemma \ref{lemma-finite}, we have 
		\begin{align*}
			&\mathbb{E}\left(\log D_{k} \left(M+2u \left(M+M'-\mathbb{E}(M)\right)+2u^2 M' \right) - \log  D_{k}  (M) \right)\\
			=&\mathbb{E}\left(\mathbb{E}\left(\log D_{k} \left(M+2u \left(M+M'-\mathbb{E}(M)\right)+2u^2 M' \right) \;\middle|\; u\right)\right)- \mathbb{E}\left(\log D_{k} (M) \right)\\
			=&\mathbb{E}\left(\mathbb{E}\left(\log D_{k}\left( \left(1+2u+2u^2\right)M \right) \;\middle|\; u\right)\right)- \mathbb{E}\left(\log D_{k} (M) \right)\\
			=&k\cdot \mathbb{E}\left( \log(1+2u+2u^2)\right)\\
			\le & \frac{k u_0}{2}\int_{0}^{\infty} t^{-\frac{3}{2}}\log(1+2t+2t^2)dt\\
			\le & k u_0\int_{0}^{\infty} t^{-\frac{3}{2}}\log(1+\sqrt{2}t)dt\\
			=& 2^{\frac{5}{4}} \pi k u_0 
		\end{align*}
		for $k=m-1$ and $k=m$.
		
		By Lemma \ref{lemma-monotone}, Lemma \ref{lemma-positive} and Markov's inequality, we have 
		\begin{align*}
			&\frac{1}{4}\cdot\mathbb{P}\left(\sigma_m(M)\le \frac{u_0^2\alpha^2}{2m s}  \right)\\
			\le &\left(\frac{1}{2}-2\cdot\mathbb{P}\left( \sigma_1(M)\ge s\right)\right) \mathbb{P}\left(\sigma_m(M)\le \frac{u_0^2\alpha^2}{2m s}  \right)\\
			\le & \mathbb{E}\left(\log D_{m} \left(M+2u \left(M+M'-\mathbb{E}(M)\right)+2u^2 M' \right)\right.\\
			&\left. +\log D_{m-1} \left(M+2u \left(M+M'-\mathbb{E}(M)\right)+2u^2 M' \right)\right. \\
			&\left.- \log  D_{m}  (M) - \log  D_{m-1}  (M) \right)\\
			\le & 2^{\frac{5}{4}} \pi u_0 (2m-1) \\
			\le & 15 m u_0.
		\end{align*}
		Take $u_0=\frac{\sqrt{2}\varepsilon}{m}$, then we have $$\mathbb{P}\left(\sigma_m(M)\le \frac{\varepsilon^2 \alpha^2}{ m^3 s}\right)\le 60\sqrt{2} \varepsilon.$$
		
		Therefore, Theorem \ref{main-2} holds by taking $c_{1.6}=60\sqrt{2}$.
	\end{proof}

	\section{Proofs of Theorem \ref{main-1} and Theorem \ref{Matrix_anticoncentration}}
	\subsection{A variation of the $\varepsilon$-net theorem}
	We first prove the following variation of the $\varepsilon$-net theorem. One can compare it with \cite[Lemma 2.3]{Lecue} and \cite[Proposition 6.6]{Alon}.

	\begin{theorem}\label{theorem-epsilon}
			There exists an absolute constant $c_{3.1}>0$ such that the following statement holds.
			
			Let $(X,R)$ be a measurable range space of VC-dimension at most $d$, and let $0<\beta \le 1$. Let $x_1, \ldots, x_n$ be independent random variables in $X$ with $$n\ge \frac{c_{3.1} d}{\beta}\log \frac{2}{\beta}.$$ Then with probability at least $1 -  \exp\left(-\frac{n\beta }{c_{3.1}}\right)$, we have $$\left| \left\{ i\in \left\{1,\ldots,n \right\}: x_i \in r \right\}\right|\ge \frac{n\beta }{2}$$ for each $r\in R$ with $$\sum_{i=1}^n\mathbb{P}\left(x_i\in r\right)\ge n\beta.$$
	\end{theorem}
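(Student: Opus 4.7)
The plan is to combine the classical symmetrization technique from empirical-process theory with a peeling argument over shell sizes and Bernstein's inequality. The tight $1/\beta$ scaling in the sample-size hypothesis is what forces the use of Bernstein (Hoeffding would cost a spurious factor of $1/\beta$), and the need to control the resulting empirical variance uniformly over the VC class is what drives the peeling.

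First, I would introduce an independent ghost sample $x_1', \ldots, x_n'$ and set $A_r := |\{i : x_i \in r\}|$, $B_r := |\{i : x_i' \in r\}|$, and $\mu_r := \sum_{i=1}^n \mathbb{P}(x_i \in r)$. For any $r$ with $\mu_r \ge n\beta$, the multiplicative Chernoff bound yields $\mathbb{P}(B_r \ge 3n\beta/4) \ge 1/2$ once $n\beta$ exceeds an absolute constant, which is guaranteed by taking $c_{3.1}$ large enough. A measurable selection of a witnessing $r$ then gives the standard reduction
$$\mathbb{P}(\exists r : \mu_r \ge n\beta,\, A_r < n\beta/2) \le 2\,\mathbb{P}(\exists r : A_r < n\beta/2,\, B_r \ge 3n\beta/4).$$
Because the pairs $(x_i, x_i')$ are i.i.d.\ across $i$, the joint law is invariant under independent sign swaps $\sigma_i \in \{\pm 1\}$ exchanging $x_i$ with $x_i'$. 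Conditioning on the unordered $2n$-point sample and taking probabilities only over $\sigma$, it then suffices to control
$$D_r^\sigma := A_r^\sigma - B_r^\sigma = \sum_{i=1}^n \sigma_i Z_{r,i}, \qquad Z_{r,i} := \mathbf{1}\{x_i \in r\} - \mathbf{1}\{x_i' \in r\} \in \{-1,0,1\},$$
with $T_r := A_r + B_r$ swap-invariant.

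Next, I would partition the ranges into the central shell $\{3n\beta/4 \le T_r \le 5n\beta/4\}$ and the large shells $T_r \in [\tfrac{5n\beta}{4} 2^{k-1},\, \tfrac{5n\beta}{4} 2^k)$ for $k = 1, 2, \ldots, O(\log(1/\beta))$, which are finitely many nonempty since $T_r \le 2n$. On the central shell, the event $\{A_r^\sigma < n\beta/2, B_r^\sigma \ge 3n\beta/4\}$ forces $|D_r^\sigma| \ge n\beta/4$ with $V_r := \sum_i Z_{r,i}^2 \le T_r \le 5n\beta/4$; on shell $k \ge 1$ it forces $D_r^\sigma \le n\beta - T_r \le -c' n\beta \cdot 2^k$ with $V_r \le T_r \le \tfrac{5n\beta}{4} 2^k$, for an absolute $c' > 0$. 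Bernstein's inequality therefore gives, for every fixed $r$ in shell $k$,
$$\mathbb{P}_\sigma(\text{shell-}k\text{ violation}) \le \exp(-c'' n\beta \cdot 2^k)$$
for an absolute $c'' > 0$. By Sauer--Shelah, the number of distinct projections of $R$ onto the $2n$-point sample is at most $\pi_R(2n) \le (2en/d)^d$; a union bound over these patterns inside each shell, then summing the resulting geometric series in $k$, yields $\mathbb{P}(\text{bad event}) \le O(1) \cdot (2en/d)^d \exp(-c'' n\beta)$. Under the hypothesis $n \ge c_{3.1} d \beta^{-1} \log(2/\beta)$ with $c_{3.1}$ sufficiently large, the Sauer--Shelah contribution $d \log(2en/d) = O(d \log(1/\beta)) = O(n\beta/c_{3.1})$ is absorbed into the Bernstein exponent, delivering the desired $\exp(-n\beta/c_{3.1})$ tail.

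The main obstacle is matching the sharp $1/\beta$ dependence: the empirical variance $T_r$ appearing in Bernstein's bound is not a priori controlled by $n\beta$, since ranges with $\mu_r \gg n\beta$ can easily produce $T_r \gg n\beta$, so a single direct Bernstein application cannot handle the whole class uniformly. The peeling resolves this, because on deeper shells the required deviation $|D_r^\sigma|$ grows linearly in $T_r$, keeping the Bernstein exponent at $\Omega(n\beta \cdot 2^k)$ on shell $k$ and making the geometric sum over $k$ converge.
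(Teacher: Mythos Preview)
Your proof is correct and shares the paper's overall scaffolding: ghost sample, reduction to the double-sample event via Chernoff on the ghost, swap symmetrization, Sauer--Shelah, and union bound. The genuine difference is in how you bound the per-range probability after conditioning on the $2n$-point sample. You peel the ranges into $O(\log(1/\beta))$ shells according to $T_r=A_r+B_r$ and apply Bernstein to the Rademacher sum $D_r^\sigma$ on each shell, then sum the geometric series. The paper instead avoids peeling and Bernstein altogether: conditioning on the sample, it splits into just two cases on $\mu_r=T_r/2$ (either $\mu_r\ge 2n\beta/3$ or $\mu_r<2n\beta/3$) and observes that in the first case $\mathbb{P}(A_r^\sigma<n\beta/2)$ is small while in the second $\mathbb{P}(B_r^\sigma\ge 5n\beta/6)$ is small, each by a single multiplicative Chernoff bound, so the intersection has probability $\le\exp(-n\beta/54)$ regardless of $\mu_r$. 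This two-case trick is shorter and sidesteps the variance issue you correctly identified as the obstacle; your peeling is a valid but heavier alternative.

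Two minor points. First, your phrase ``the pairs $(x_i,x_i')$ are i.i.d.\ across $i$'' is not quite right, since the $x_i$ are merely independent; what you need (and what holds) is that each pair $(x_i,x_i')$ is exchangeable, which suffices for the swap invariance. Second, the claim $d\log(2en/d)=O(d\log(1/\beta))$ is false when $n$ is far above the threshold; what is true and what you actually need is $d\log(2en/d)=O(n\beta)$ uniformly for $n$ above the threshold, which follows because $u\mapsto \log(2eu)/u$ is decreasing for $u>1/2$ applied to $u=n/d$.
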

		\begin{proof}
			Let $y_1,\ldots, y_n$ be an independent copy of the random sequence $x_1,\ldots, x_n$. Let $x_*$ denote the collection of all $x_i$ ($1\le i\le n$), and let $y_*$ denote the collection of all $y_i$ ($1\le i\le n$).
			
			Let $R_\beta$ denote the set
			$$\left\{r\in R:\sum_{i=1}^n\mathbb{P}\left(x_i\in r\right)\ge n\beta\right\}.$$ 
			For each $r\in R_\beta$, let $E_{r,1}$ denote the event that $$\left| \left\{ i\in \left\{1,\ldots,n \right\}: x_i \in r \right\}\right|< \frac{n\beta }{2},$$
			and let $E_{r,2}$ denote the event that 
			$$\left|\left\{i\in\left\{1,\ldots, n\right\}: y_i\in r\right\}\right|\ge \frac{5n\beta}{6}.$$
			Then by Chernoff bound, we have
			$$\min_{r\in R_{\beta}} \mathbb{P}\left(E_{r,2}\right)\ge 1-\exp\left(-\frac{n\beta}{72}\right)$$	which implies
			\begin{align*}
				&\mathbb{P}\left(\bigcup_{r\in R_\beta} \left(E_{r,1}\cap E_{r,2}\right)\right)\\
				\ge&\mathbb{P}\left(\bigcup_{r\in R_\beta} E_{r,1}\right)\mathbb{P}\left(\bigcup_{r\in R_\beta} \left(E_{r,1}\cap E_{r,2}\right)\;\middle| \bigcup_{r\in R_\beta} E_{r,1}\;\right)\\
				\ge&\mathbb{P}\left(\bigcup_{r\in R_\beta} E_{r,1}\right) \inf_{r\in R_{\beta}}\mathbb{P}\left( E_{r,2}\right)\\
				\ge&\left(1-\exp\left(-\frac{n\beta}{72}\right)\right)\mathbb{P}\left(\bigcup_{r\in R_\beta} E_{r,1}\right).
			\end{align*}
		
 			Let $z_1,\ldots, z_n$ and $t_1,\ldots, t_n$ be two random sequences where each $(z_i,t_i)$ is an independent choice between $(x_i,y_i)$ and $(y_i,x_i)$ with uniform probability. Then $z_1,\ldots, z_n, t_1,\ldots, t_n$ has the same joint distribution as $x_1,\ldots, x_n,y_1,\ldots, y_n$.
 			
			For each $r\in R_\beta$, let $E_{r,3}$ denote the event that $$\left| \left\{ i\in \left\{1,\ldots,n \right\}: z_i \in r \right\}\right|< \frac{n\beta }{2},$$
			and let $E_{r,4}$ denote the event that 
			$$\left|\left\{i\in\left\{1,\ldots, n\right\}: t_i\in r\right\}\right|\ge \frac{5n\beta}{6}.$$
			
			For any $r\in R_\beta$, we denote
			\begin{align*}
				&\sum_{i=1}^n \mathbb{P}\left(z_i\in r\;\middle| \; x_*, y_*\right)\\
				=&\sum_{i=1}^n \mathbb{P}\left(t_i\in r\;\middle| \; x_*, y_*\right)\\
				=&\frac{1}{2}\left(\left| \left\{ i\in \left\{1,\ldots,n \right\}: x_i \in r \right\}\right|+\left| \left\{ i\in \left\{1,\ldots,n \right\}: y_i \in r \right\}\right|\right)
			\end{align*}
			by $\mu_r$. If $\mu_r\ge \frac{2n\beta}{3}$, then by Chernoff bound, we have
			\begin{align*}
				&\mathbb{P}\left(E_{r,3}\;\middle|\; x_*,y_*\right)\\
				\le &\exp\left(-\frac{\mu_r}{2}\left(1-\frac{n\beta}{2\mu_r}\right)^2\right)\\
				\le& \exp\left(-\frac{n\beta}{48}\right);
			\end{align*}
			if $\mu_r< \frac{2n\beta}{3}$, then by Chernoff bound, we have			
			\begin{align*}
				&\mathbb{P}\left(E_{r,4}\;\middle|\; x_*,y_*\right)\\
				\le &\exp\left(-\frac{\mu_r}{\frac{5n\beta}{6\mu_r}+1}\left(\frac{5n\beta}{6\mu_r}-1\right)^2\right)\\
				\le& \exp\left(-\frac{n\beta}{54}\right).
			\end{align*} 
			Therefore $$\mathbb{P}\left(E_{r,3}\cap E_{r,4}\;\middle|\; x_*,y_*\right)\le \exp\left(-\frac{n\beta}{54}\right)$$
			almost surely for any $r\in R_\beta$.
			
			By Sauer-Shelah-Perles lemma, for each pair $(x_*,y_*)$, there exists a subset $R_{x_*,y_*}$ of $R_\beta$ with cardinality at most $\sum_{i=0}^{d}\binom{2n}{i}$, such that for every $r\in R_\beta$ there exists $r'\in R_{x_*,y_*}$ with $$\left\{i\in{1,\ldots,n}: x_i\in r\right\}=\left\{i\in{1,\ldots,n}: x_i\in r'\right\}$$
			and $$\left\{i\in{1,\ldots,n}: y_i\in r\right\}=\left\{i\in{1,\ldots,n}: y_i\in r'\right\}.$$ 
			Then we have 
			\begin{align*}
				&\mathbb{P}\left(\bigcup_{r\in R_\beta} \left(E_{r,1}\cap E_{r,2}\right)\right)\\
				=&\mathbb{P}\left(\bigcup_{r\in R_\beta} \left(E_{r,3}\cap E_{r,4}\right)\right)\\
				=&\mathbb{E}\left(\mathbb{P}\left(\bigcup_{r\in R_\beta} \left(E_{r,3}\cap E_{r,4}\right)\;\middle|\;x_*,y_*\right)\right)\\
				=&\mathbb{E}\left(\mathbb{P}\left(\bigcup_{r\in R_{x_*,y_*}} \left(E_{r,3}\cap E_{r,4}\right)\;\middle|\;x_*,y_*\right)\right)\\
				\le &\mathbb{E}\left(\left|R_{x_*,y_*}\right|\sup_{r\in R_\beta}\mathbb{P}\left( E_{r,3}\cap E_{r,4}\;\middle|\;x_*,y_*\right)\right)\\
				\le &\exp\left(-\frac{n\beta}{54}\right) \sum_{i=0}^{d}\binom{2n}{i}\\
				\le &\exp\left(-\frac{n\beta}{54}\right)\left(\frac{4e n}{d}\right)^d.
			\end{align*}
		
		Take $c_{3.1}=1000$, then we have 
		\begin{align*}
			&\frac{\frac{4en}{d}}{\log\frac{4en}{d}}\\
			\ge&\frac{\frac{4000e}{\beta}\log \frac{2}{\beta}}{\log\left(\frac{4000e}{\beta}\log \frac{2}{\beta}\right)}\\
			\ge&\frac{240e}{\beta},
		\end{align*}
		and thus
		\begin{align*}
			&\mathbb{P}\left(\bigcup_{r\in R_\beta} E_{r,1}\right)\\
			\le&\left(1-\exp\left(-\frac{n\beta}{72}\right)\right)^{-1}\mathbb{P}\left(\bigcup_{r\in R_\beta} \left(E_{r,1}\cap E_{r,2}\right)\right)\\
			\le&\left(1-\exp\left(-\frac{n\beta}{72}\right)\right)^{-1} \exp\left(-\frac{n\beta}{54}\right)\left(\frac{4e n}{d}\right)^d\\
			\le&\left(1-\exp\left(-\frac{n\beta}{72}\right)\right)^{-1} \exp\left(-\frac{n\beta}{540}\right)\\
			\le &\exp\left(-\frac{n\beta}{c_{3.1}}\right).
		\end{align*}
	
		Therefore, Theorem \ref{theorem-epsilon} holds.
		\end{proof}
		\subsection{Proof of Theorem \ref{main-1}}
		For any unit vector $x$ in $\mathbb{R}^m$ (resp. $\mathbb{C}^m$), the set of complex matrices $$r(x):=\left\{M\in \mathbb{C}^{m\times m}: \left|x^* M x\right|> \alpha \right\}$$
		is defined by the sign of a real polynomial in $2m$ variables of degree at most $4$. By Warren's bound \cite{Warren}, the VC-dimension of the family of all $r(x)$ is at most $32m.$ Take $c_{1.1}=32c_{3.1}$, where $c_{3.1}$ is the absolute constant in Theorem \ref{theorem-epsilon}. By the small-ball probability bound
		\begin{align*}
			&\sum_{i=1}^n\mathbb{P}\left(M_i\in r(x) \right)\\
			\ge &\mathbb{P}\left(x^* M_i x>\alpha \right)\\
			\ge &n\beta
		\end{align*}
		for any unit vector $x$ in $\mathbb{R}^m$ (resp. $\mathbb{C}^m$), and Theorem \ref{theorem-epsilon}, we have
		\begin{align*}
			&\mathbb{P}\left(\sigma_m\left(\frac{1}{n}\sum_{i=1}^n M_i\right)\le \frac{\alpha\beta}{2} \right)\\
			\le&\mathbb{P}\left(\inf_{x}\left|\left\{i\in\left\{1,\ldots,n\right\}: x^* M_i x >\alpha \right\}\right|
			< \frac{n\beta}{2}\right)\\
			=&\mathbb{P}\left(\inf_{x,y}\left|\left\{i\in\left\{1,\ldots,n\right\}: M_i\in r(x) \right\}\right|< \frac{n\beta}{2}\right)\\
			\le& \exp\left(-\frac{n\beta}{c_{3.1}}\right)\\
			\le& \exp\left(-\frac{n\beta}{c_{1.1}}\right)
		\end{align*}
		where $x$ ranges over all unit vectors in $\mathbb{R}^m$ (resp. $\mathbb{C}^m$).
		
		Therefore, Theorem \ref{main-1} holds.
		\subsection{Proof of Theorem \ref{Matrix_anticoncentration}}
		For any pair $(x,y)$ in $S_{m,T}$, the set of complex matrices $$r(x,y):=\left\{M\in \mathbb{C}^{m\times m}: \left|x^* M y\right|> \alpha \right\}$$
		is defined by the sign of a real polynomial in $4m$ variables of degree at most $4$. By Warren's bound \cite{Warren}, the VC-dimension of the family of all $r(x,y)$ is at most $64m.$ Let $c_{3.1}>0$ be the absolute constant in Theorem \ref{theorem-epsilon}. For the rest of the proof, assume that $$n\ge \frac{64 c_{3.1}m}{\beta}\log \frac{2}{\beta}.$$ By Theorem \ref{theorem-epsilon} and the small-ball probability bound
		\begin{align*}
			&\sum_{i=1}^n\mathbb{P}\left(M_i\in r(x,y) \right)\\
			\ge &n\cdot P_{\alpha,T}\left(M'\right)\\
			\ge &n\beta
		\end{align*}
		for any $(x,y)$ in $S_{m,T}$, we have
		\begin{align*}
			&\mathbb{P}\left(\inf_{x,y}\left|\left\{i\in\left\{1,\ldots,n\right\}: \left|x^* M_i y\right| >\alpha \right\}\right|
			< \frac{n\beta}{2}\right)\\
			=&\mathbb{P}\left(\inf_{x,y}\left|\left\{i\in\left\{1,\ldots,n\right\}: M_i\in r(x,y) \right\}\right|< \frac{n\beta}{2}\right)\\
			\le& \exp\left(-\frac{n\beta}{c_{3.1}}\right)
		\end{align*}
		where $(x,y)$ ranges over $S_{m,T}$.
		
		Let $M_*$ denote the collection of all random matrices $M_i$ ($1\le i\le n$). Then $M$ conditioned on $M_*$ is a random matrix with jointly Gaussian entries. Let $\alpha'=\frac{1}{4}\alpha \beta^{\frac{1}{2}}$ be a positive real number. Then we have
		\begin{align*}
			&\mathbb{P}\left(P_{\alpha',T}\left(M\;\middle|\; M_*\right)<\frac{1}{2}\right)\\
			=&\mathbb{P}\left(\inf_{x,y} \mathbb{P}\left( 
			\left|x^*M y\right|>\alpha'\;\middle|\; M_* \right)<\frac{1}{2}\right)\\
			=&\mathbb{P}\left(\sup_{x,y} \mathbb{P}\left( 
			\left|x^*M y\right|\le\alpha'\;\middle|\; M_* \right)\ge \frac{1}{2}\right)\\
			\le &\mathbb{P}\left(\sup_{x,y} \mathbb{P}\left( 
			\max\left\{\left|\Re\left(x^*M y\right)\right|,\left|\Im\left(x^*M y\right)\right|\right\}\le\alpha'\;\middle|\; M_* \right)\ge \frac{1}{2}\right)\\
			\le &\mathbb{P}\left(\sup_{x,y} \frac{2\alpha'}{\sqrt{2\pi\max\left\{\mathrm{Var}\left(\Re\left(x^*M y\right)\;\middle|\;M_*\right),\mathrm{Var}\left(\Im\left(x^*M y\right)\;\middle|\;M_*\right)\right\}}}\ge \frac{1}{2}\right)\\
			\le &\mathbb{P}\left(\sup_{x,y} \frac{2\alpha'}{\sqrt{\pi\mathrm{Var}\left(x^*M y\;\middle|\;M_*\right)}}\ge \frac{1}{2}\right)\\
			=&\mathbb{P}\left(\inf_{x,y}\sum_{i=1}^n\left|x^* M_i y\right|^2\le \frac{16n\alpha'^2}{\pi}\right)\\
			\le &\mathbb{P}\left(\inf_{x,y}\sum_{i=1}^n\left|x^* M_i y\right|^2\le \frac{n\alpha^2\beta}{2}\right)\\
			\le &\mathbb{P}\left(\inf_{x,y}\left|\left\{i\in\left\{1,\ldots,n\right\}: \left|x^* M_i y\right| >\alpha \right\}\right|
			< \frac{n\beta}{2}\right)\\
			\le& \exp\left(-\frac{n\beta}{c_{3.1}}\right)
		\end{align*}
		where $(x,y)$ ranges over $S_{m,T}$.
		
		Let $c_{1.6}>0$ be the absolute constant in Theorem \ref{main-2}. Take $c_{1.8}=\max\left\{4 c_{1.6},64c_{3.1},8\right\}$ as a positive constant. Let $\varepsilon'=4\varepsilon$ be a non-negative real number. With $$n\ge \frac{c_{1.8}m}{\beta}\log \frac{2}{\beta},$$
		we have
		\begin{align*}
			&\mathbb{P}\left(\sigma_m(M)\le\frac{\varepsilon^2 \alpha^2 \beta}{m^3 s}\right)\\
			&\mathbb{P}\left(\sigma_m(M)\le\frac{\varepsilon'^2 \alpha'^2}{m^3 s}\right)\\
			=&\mathbb{E}\left(\mathbb{P}\left(\sigma_m(M)\le \frac{\varepsilon'^2 \alpha'^2}{ m^3 s} \right)\;\middle|\; M_*\right)\\
			\le& c_{1.6}\varepsilon'+\mathbb{P}\left(P_{\alpha',T}\left(M\;\middle|\; M_*\right)<\frac{1}{2}\right)+\mathbb{P}\left(\mathbb{P}\left(\sigma_1(M)>s\;\middle| \;M_*\right)>\frac{1}{8}\right)\\
			\le& 4c_{1.6}\varepsilon+ \exp\left(-\frac{n\beta}{c_{3.1}}\right)+8\cdot \mathbb{E}\left(\mathbb{P}\left(\sigma_1(M)>s\;\middle| \;M_*\right)\right)\\
			=& 4c_{1.6}\varepsilon+ \exp\left(-\frac{n\beta}{c_{3.1}}\right)+8\cdot \mathbb{P}\left(\sigma_1(M)>s\right)\\
			\le&c_{1.8} \left(\varepsilon+\mathbb{P}\left(\sigma_1(M)>s\right)\right)+ \exp\left(-\frac{n\beta}{c_{1.8}}\right)
		\end{align*}
		for each $\varepsilon\ge 0$ and $s\ge 0$.
		
		Therefore, Theorem \ref{Matrix_anticoncentration} holds.

	\section{Proof of Theorem \ref{main-4}}
	For convenience sake, we assume
	$$n> 10000 m^2\log n+1$$ and $$h\ge 10000 m \log n \log\frac{1}{\alpha}$$ throughout this section.
	
	We decompose the unit sphere $S^{n-1}$ into $\ceil{\frac{\log m}{\log 10}}+2$ classes of vectors according to their data compression ratio. In order to state our decomposition, we define the classes of $A$-compressible and $A$-incompressible vectors as follows.
	
	\begin{definition}\label{def-com}
		For each $k=1,\ldots,m$, $l=0,1,\ldots, m$ and $\varepsilon>0$, we define the set $\mathrm{Comp}_A(k, l, \varepsilon)$ of $A$-compressible vectors as the set of all $w$ in $S^{n-1}$ such that there exist $x\in S^{k-1}$ and $y\in \mathbb{R}^{n}$ with $\left|\mbox{Supp}(y)\right|\le l$ and 
		$$\norm{\sum_{i=1}^k x_i A_i^T w-y}_2<\varepsilon$$
	\end{definition}
	
	\begin{definition} 
		The set $\IncompA{k,l,\varepsilon}$ of $A$-incompressible vectors is defined as $S^{n-1}\setminus \CompA{k,l,\varepsilon}$. 
	\end{definition}

	For each $j=0,1,\ldots, \ceil{\frac{\log m}{\log 10}}$, we consider the decomposition 
	$$S^{n-1}=\IncompA{k_j,l_j,\varepsilon_j}\cup\CompA{k_j,l_j,\varepsilon_j}$$
	of $S^{n-1}$ into $A$-compressible and $A$-incompressible vectors, where the tuple $\left(k_j,l_j,\varepsilon_j\right)$ is defined as $$\left(k_j,l_j,\varepsilon_j\right):=\left(\left\lceil\frac{m}{10^{j}}\right\rceil,\left\lfloor\frac{n}{10^{j+2} \log n}\right\rfloor,\alpha^{10^{\ceil{\frac{\log m}{\log 10}}-j+1}}\right).$$

    The following lemma treats the $\CompA{k_{\ceil{\frac{\log m}{\log 10}}},l_{\ceil{\frac{\log m}{\log 10}}},\varepsilon_{\ceil{\frac{\log m}{\log 10}}}}$ part. It is essentially a quantitative version of \cite[Lemma 5.6]{PV}. 
    
    \begin{lemma}\label{lemma-base}
    	Let $G_0$ denote an $n\times \left\lfloor\frac{n-1}{2m}\right\rfloor$ sparse Gaussian matrix where entry is set to $\mathcal{N}(0,1)$ with probability $\frac{h}{n}$, and $0$ otherwise. Then with probability at least $1-\alpha^{10}$, we have $$\norm{G_0^T A_1^T w}_2\ge \alpha^{7}$$ for every vector $w\in \CompA{1,\floor{\frac{n}{100m \log n}},\alpha^{10}}$.
    \end{lemma}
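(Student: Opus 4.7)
The plan is to reduce the statement to a uniform lower bound on the smallest singular value of every $l\times s$ row-submatrix of $G_0$ (with $l=\floor{n/(100m\log n)}$ and $s=\floor{(n-1)/(2m)}$), then establish this uniform bound through an $\varepsilon$-net combined with small-ball estimates for sparse Gaussian sums.

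First, by the $k=1$ case of the hypothesis $\sigma_n(A_1)\ge\alpha$, so $u=A_1^Tw$ satisfies $\alpha\le\|u\|_2\le 1$. Compressibility of $w$ gives an $l$-sparse $y\in\mathbb R^n$ with $\|\pm u-y\|_2<\alpha^{10}$ and $\|y\|_2\in[\alpha/2,2]$. A routine operator-norm bound yields $\|G_0\|_{\mathrm{op}}=n^{O(1)}$ with overwhelming probability, and the assumption $\alpha\le n^{-\log n}$ makes the sparse-approximation error $\|G_0\|_{\mathrm{op}}\alpha^{10}\le\alpha^9$, so it is enough to prove $\|G_0^Ty\|_2\ge 2\alpha^7$ for every $l$-sparse $y$ with $\|y\|_2\ge\alpha/2$. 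Writing $H_S$ for the $l\times s$ row-submatrix of $G_0$ indexed by the support $S$ of $y$, we have $\|G_0^Ty\|_2\ge\sigma_l(H_S)\|y\|_2$, so the task reduces to showing $\sigma_l(H_S)\ge 4\alpha^6$ for every $S\subseteq[n]$ of size $l$, with probability at least $1-\alpha^{10}/2$.

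For fixed $S$ I would cover $S^{l-1}$ by an $\eta$-net of size $\le(3/\eta)^l$ with $\eta=\alpha^7$ (so $\|G_0\|_{\mathrm{op}}\eta\le\alpha^6$ absorbs the net error). For each net vector $v$, the coordinates $Z_j=\sum_{i\in S}(G_0)_{ij}v_i$ of $H_S^Tv$ are iid across $j=1,\ldots,s$, so $\Pr[\|H_S^Tv\|_2\le 5\alpha^6]\le\Pr[|Z_1|\le 5\alpha^6]^s$. Partitioning by the effective support size $k=|\mathrm{Supp}(v)|$, a standard density estimate (after truncating away very small entries and absorbing their contribution through the operator-norm bound) gives
\[
\Pr[|Z_1|\le 5\alpha^6]\le (1-h/n)^k+C\sqrt{k}\,\alpha^6.
\]
In the regime where $(1-h/n)^k$ dominates one obtains $\Pr[|Z_1|\le 5\alpha^6]^s\le 3e^{-ksh/n}$; in the complementary regime the density term dominates and $\Pr[|Z_1|\le 5\alpha^6]^s\le\alpha^{4s}$, which is negligibly small. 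Summing over $k$ and union-bounding over $\binom{n}{k}$ subsets together with $(3/\eta)^k$ net points, the quantitative assumptions $n>c_{1.10}m^2\log n+1$ and $h\ge c_{1.10}m\log n\log(1/\alpha)$ ensure that $sh/n\gtrsim\log n\log(1/\alpha)$ dominates the combined log union-bound factor, bounding the total failure probability by $\alpha^{10}/2$ once $c_{1.10}$ is chosen sufficiently large.

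The hard part is matching the combinatorial $\binom{n}{l}(3/\eta)^l$ union-bound factor by a sharp enough concentration of $\|H_S^Tv\|_2$. The difficulty is that the single-coordinate small-ball bound is weakest precisely for $v$ concentrated on one index (where one cannot exploit the Gaussian density) and strongest for well-spread $v$ (where the density dominates); the two-regime analysis above is designed to handle both extremes simultaneously, and the quantitative calibration $h\gtrsim m\log n\log(1/\alpha)$ guarantees enough decay in the exponent $sh/n$ to close the bound.
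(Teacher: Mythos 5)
The overall reduction is reasonable and broadly parallel to the paper's structure: you pass from $A$-compressible $w$ to the vector $u=A_1^Tw$ which is close to an $l$-sparse vector, deal with the approximation error via an operator-norm bound on $G_0$, and then seek a uniform lower bound on the action of $G_0^T$ on sparse vectors via an $\varepsilon$-net and per-column small-ball estimates. However, there is a genuine gap in the central probabilistic estimate, and it cannot be repaired within your single-scale setup.

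The claimed bound
\[
\Pr\bigl[|Z_1|\le 5\alpha^6\bigr]\le (1-h/n)^k+C\sqrt{k}\,\alpha^6
\]
is not valid for a general unit vector with effective support of size $k$. To run a density argument after truncation you must fix a truncation threshold $\tau$: the $(1-h/n)^k$ term bounds the probability that every one of the $k$ above-threshold coordinates is zeroed out by the Bernoulli mask, while the second term must bound the conditional density contribution when at least one above-threshold coordinate is hit. But the only guaranteed variance lower bound in that conditional regime is $\tau^2$, so the correct density term is $C\alpha^6/\tau$, not $C\sqrt{k}\,\alpha^6$. The $\sqrt{k}$ form would hold only if every effective-support coordinate had magnitude $\gtrsim 1/\sqrt{k}$, which a net point in $S^{k-1}$ need not satisfy — and which cannot be forced, because after your truncation the surviving coordinates only have magnitude $\ge\tau$. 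Now you face contradictory requirements on $\tau$: making the density term $C\alpha^6/\tau\ll 1$ forces $\tau\gg\alpha^6$, while absorbing the discarded coordinates through $\|G_0\|_{\mathrm{op}}\sqrt{l}\,\tau\ll\alpha^6$ forces $\tau\ll\alpha^6 n^{-O(1)}$. There is no threshold satisfying both, so the two-regime analysis does not close. A symptom of this: for $k$-sparse test vectors (large $k$, comparable to $l$) your union-bound factor is of size roughly $\alpha^{-8k}$, but the per-vector probability only decays like $\exp(-sh/n)\approx\exp(-h/(2m))$ when the vector's mass is concentrated on one big coordinate with the rest barely above the cutoff — the exponent does not scale with $k$, so the union bound fails for $k\gg\log n$.

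The paper's proof resolves precisely this tension by making the truncation threshold, the anti-concentration level, the net resolution, \emph{and} the count of "large" entries all co-vary with a scale index $i$: the sets $C_i$ consist of vectors with fewer than $2^i$ coordinates above $s^{-4(i+1)}$, so a vector in $C_i\setminus C_{i-1}$ has at least $2^{i-1}$ coordinates of magnitude $\ge s^{-4i}$; the anti-concentration level is $s^{-4i-1}$, so the density ratio is always $s^{-4i-1}/s^{-4i}=1/s$ uniformly over scales; the net resolution $s^{-4(i+1)}$ at scale $i$ makes the truncation error $\sim ns\cdot s^{-4(i+1)}$ safely below $s^{-4i-1}$; and the per-vector miss probability $(1-h/n)^{2^{i-1}}$, raised to the $s$-th power, beats the net size $\sim\alpha^{-6\cdot 2^i}$ because both scale linearly in $2^i$ in their exponents. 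Your fixed $\eta=\alpha^7$ and fixed level $5\alpha^6$ have no analogue of this coupling, and that is where the argument breaks.
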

	\begin{proof}
		Let $s$ denote the real number $\exp\left(\frac{\log\frac{1}{\alpha}}{\log n}\right)$, then by the condition $\alpha\le n^{-\log n}$, we have $s\ge n$. For a real standard normal random variable $x$, we have $$\mathbb{P}\left(|x|\ge s\right)\le\frac{2}{s\sqrt{2\pi}}\exp\left(-\frac{s^2}{2}\right).$$
		Since
		$$s\ge \frac{10\log \frac{1}{\alpha}}{\log n}\ge 10\sqrt{\log \frac{1}{\alpha}},$$
		by a union bound, we have 
		\begin{align*}
			&\mathbb{P}\left(\norm{G_0}_\infty \ge s\right)\\
			\le&\frac{2n^2}{s\sqrt{2\pi}}\exp\left(-\frac{s^2}{2}\right)\\
			\le&\frac{2n^2}{s\sqrt{2\pi}}\alpha^{50}\\
			\le&\alpha^{49}.
		\end{align*}
		
		For every non-negative integer $i$ with $2^i\le \frac{n}{50 m\log n}$, let $C_i\subseteq S^{n-1}$ denote the set of unit vectors with less than $2^i$ entries with magnitude at least $s^{-4(i+1)}$, and let $S_i\subseteq S^{n-1}$ denote the set of unit vectors with less than $2^i$ nonzero entries. Then we have $C_0=S_0=\emptyset$.
		
		For each positive integer $i$ with $2^{i}\le \frac{n}{50 m\log n}$, let $N_i$ be an $s^{-4(i+1)}$-net of $S_i\setminus C_{i-1}$ with size at most
		\begin{align*}
			&\binom{n}{2^i-1}\left(1+\frac{2}{s^{-4(i+1)}}\right)^{2^i-1}\\
		\le &s^{2^i(4i+6)}\\
		\le &s^{6\cdot 2^i \log n}\\
		=&\alpha^{-6\cdot 2^i}.
		\end{align*}
		Then $N_i$ is a $\left(2s^{-4i-\frac{7}{2}}\right)$-net of $C_i\setminus C_{i-1}$.
		
		Let $g$ denote an $n$-dimensional sparse Gaussian vector where each entry is set to $\mathcal{N}(0,1)$ with probability $\frac{h}{n}$. Then for every $y\in N_i$, we have
		\begin{align*}
			&\mathbb{P}\left(\norm{G_0^T y }_\infty <  s^{-4 i-1}\right)\\
		=&\mathbb{P}\left(\left|g^T y\right| <  s^{-4 i-1}\right)^{\floor{\frac{n-1}{2m}}}\\
		\le &\mathbb{P}\left(\left|g^T y\right| <  s^{-4 i-1}\right)^{\frac{n}{3m}}\\
		\le&\left(\mathbb{P}\left(\left|g^T y\right| < s^{-4 i-1}\;\middle|\;\mathrm{Var}\left(g^T y\right)\ge s^{-8 i} \right)+\mathbb{P}\left(\mathrm{Var}\left(g^T y\right)< s^{-8 i} \right)\right)^{\frac{n}{3m}}\\
		\le&\left(\frac{1}{s}+\left(1-\frac{h}{n}\right)^{2^{i-1}}\right)^{\frac{n}{3m}}\\
		\le&\max\left\{\frac{1}{\sqrt{s}},\left(1-\frac{h}{n}\right)^{2^{i-2}} \right\}^{\frac{n}{3m}}\\
		\le& \max\left\{s^{-\frac{n}{6m}},\exp\left(-\frac{2^i h}{12m}\right)  \right\}.
		\end{align*}
		By a union bound, we have
		\begin{align*}
			&\mathbb{P}\left(\inf_{y\in  N_i } \norm{G_0^T y }_\infty <  s^{-4 i-1}\right)\\
			\le & \alpha^{-6\cdot 2^i} \max\left\{s^{-\frac{n}{6m}},\exp\left(-\frac{2^i h}{12m}\right)  \right\}\\
			\le &\max\left\{s^{-\frac{n}{25m}}, \exp\left(-\frac{2^i h}{25m}\right)\right\}\\
			\le &\exp\left(-\frac{h}{25m}\right)\\
			\le &\alpha^{40}.
		\end{align*}

		By another union bound, with probability at least $1-\alpha^{10}$, we have $\norm{G_0}_\infty<s$ and $$\inf_{y\in  N_i } \norm{G_0^T y }_\infty \ge s^{-4 i-1}$$
		for each positive integer $i$ with $2^{i}\le \frac{n}{50 m\log n}$. Under these conditions, we will prove that
		$$\norm{G_0^T A_1^T w}_2\ge \alpha^{10}$$ for every vector $w\in \CompA{1,\floor{\frac{n}{100 m \log n}},\alpha^{10}}$.
		
		By the definition of $A$-compressible vectors, the vector $A_1^Tw$ has at most $\frac{n}{100 m \log n}$ entries with magnitude at least $\alpha^{10}$ for every $w\in \CompA{1,\floor{\frac{n}{100 m \log n}},\alpha^{10}}$. Because $$\norm{A_1^T w}_2\ge\sigma_n\left(A_1\right) \ge \alpha,$$
		the vector $\frac{A_1^Tw}{\norm{A_1^T w}_2}$ has at most $\frac{n}{100 m \log n}$ entries with magnitude at least $\alpha^{9}=s^{-9\log n}$. This there exists a positive integer $i$ with $2^i\le \frac{n}{50m\log n}$ such that $\frac{A_1^Tw}{\norm{A_1^T w}_2}\in C_i\setminus C_{i-1}.$
		Because $N_i$ is a $\left(2s^{-4i-\frac{7}{2}}\right)$-net of $C_i\setminus C_{i-1}$, there exists $y\in N_i$ with $$\norm{\frac{A_1^Tw}{\norm{A_1^T w}_2}-y}_2\le2s^{-4i-\frac{7}{2}}.$$
		Then we have
		\begin{align*}
			 &\norm{G_0^T A_1^T w}_2\\
			= &\norm{A_1^T w}\norm{G_0^T \frac{A_1^T w}{\norm{A_1^T w}_2}}_2\\
			\ge &\alpha\norm{G_0^T \frac{A_1^T w}{\norm{A_1^T w}_2}}_2\\
			\ge &\alpha\left(\norm{G_0^T y}_2 -\norm{G_0^T\left( \frac{A_1^T w}{\norm{A_1^T w}_2}-y\right)}_2\right)\\
			\ge &\alpha \left(\norm{G_0^T y}_\infty -\sigma_1\left(G_0\right)\norm{ \frac{A_1^T w}{\norm{A_1^T w}_2}-y}_2\right)\\
			\ge &\alpha\left(s^{-4i-1}-2ns^{-4i-\frac{7}{2}}\norm{G_0}_\infty\right)\\
			\ge&\alpha s^{-4i-\frac{3}{2}}\left(\sqrt{s}-2\right)\\
			\ge&  \alpha^{7}.
		\end{align*}
		
		Therefore, Lemma \ref{lemma-base} holds.
	\end{proof}
	
	Our next technical lemma handles the incompressible part and each inductive step in the compressible part simultaneously. 
	\begin{lemma}\label{lemma-d}
		There exists an absolute constant $c_{4.4}>0$ such that the following statement holds.
		
		Let $g_1,\ldots, g_{d}$ be independent copies of the $n$-dimensional sparse Gaussian vector $g$ where each entry is set to $\mathcal{N}(0,1)$ with probability $\frac{h}{n}$, and $0$ otherwise. Suppose that $$h\ge c_{4.4} m \log n \log\frac{1}{\alpha}.$$
		Suppose that $k\in\left\{1,\ldots, m\right\}$, $l\in \left\{0,1,\ldots, n\right\}$ and $0<\varepsilon\le \alpha^{10k}$ satisfy
		$$\frac{l+1}{k}\ge \frac{n}{200m\log n}.$$
		
		Let $B\subseteq \mathbb{R}^n$ be a subspace of dimension $d'$. For each $i\in \left\{1,\ldots ,d\right\}$ and each $k'\in\{1,\ldots, k\}$, we define an $n\times k$ random matrix $K'_{k',i}$ as
		$$K'_{k',i}:=\left[
		\begin{array}{c|c|c|c}
			A_1 g_i & A_2 g_i & \ldots & A_{k'} g_i
		\end{array}
		\right].$$ 
		
		Let $E_{d,1}$ denote the event that either of the following holds:
		\begin{enumerate}[label=(\arabic*)] 
			\item we have $S^{n-1}\cap B\subseteq \IncompA{k,l,\varepsilon} ;$
			\item we have $B=\mathbb{R}^n$, $k=m$, and
			$$\left\{w\in S^{n-1} : K_{\frac{n}{m},i}'^T w=0\mbox{ for all }i\in I \right\} \subseteq \IncompA{k,l,\varepsilon}$$for every subset $I\subseteq \left\{1,\ldots, d\right\}$ with $|I|=\frac{n}{m}-1$.
		\end{enumerate}
		Let $E_{d,2}$ denote the event that there exists $w\in S^{n-1}\cap B$ such that $$\sum_{i=1}^d\norm{K_{k,i}'^T w}^2_2< \varepsilon^{10}.$$
		Then we have  
		$$\mathbb{P}\left(E_{d,1}\cap E_{d,2}\right)\le \alpha^{7kd-7k\ceil{\frac{d'}{k}}+7k}.$$
	\end{lemma}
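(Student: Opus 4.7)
The proof should proceed by induction on $d$, reducing to a situation where the additional Gaussian $g_d$ contributes an $\alpha^{7k}$ factor on top of the inductive hypothesis for the first $d - 1$ Gaussians. The base case handles $d \le \lceil d'/k \rceil - 1$, in which the claimed exponent $7k(d - \lceil d'/k \rceil + 1)$ is nonpositive so that the bound is trivially $\ge 1$; the degenerate case $d' = 0$ is also immediate since $S^{n-1} \cap B$ is empty and $E_{d,2}$ cannot occur.

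In the inductive step, one conditions on $g_1, \dots, g_{d-1}$ and isolates the last Gaussian $g_d$. The key observation is that $K'_{k,d}{}^T w = M(w)^T g_d$, where $M(w) := [A_1^T w \mid \cdots \mid A_k^T w]$; on the event $E_{d,1}$, any unit vector $w$ in the relevant subspace lies in $\IncompA{k,l,\varepsilon}$, meaning $\sum_{j=1}^k x_j A_j^T w$ cannot be $\varepsilon$-approximated by any $l$-sparse vector for any unit $x \in \mathbb{R}^k$. This spreads the mass of $M(w) x$ across many coordinates; combined with the sparsity parameter bound $h \ge c_{4.4} m \log n \log(1/\alpha)$ and the hypothesis $\frac{l+1}{k} \ge \frac{n}{200 m \log n}$, the sparse Gaussian $g_d$ then satisfies the global small-ball probability hypothesis of Theorem~\ref{Matrix_anticoncentration}. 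Applying Theorem~\ref{Matrix_anticoncentration} with $g_d$ playing the role of the Gaussian coefficient supplies the promised $\alpha^{7k}$ gain, together with an $\varepsilon^5$-net argument over $S^{n-1} \cap B$ that is tailored to the structure of $A$-incompressibility.

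The principal obstacle is case (2) of the event $E_{d,1}$, where $B = \mathbb{R}^n$, $k = m$, and the incompressibility hypothesis is the robust assertion ranging over every subset $I \subseteq \{1,\dots,d\}$ with $|I| = n/m - 1$. Handling this requires a union bound over the $\binom{d}{n/m - 1}$ choices of $I$ together with a careful tracking of how the approximate kernel $\bigcap_{i \in I} \ker K'_{n/m, i}{}^T$ shrinks as $I$ grows, so that the gain from the Gaussians outside $I$ still dominates the union-bound overhead. It is precisely this balancing that produces the exponent $7k(d - \lceil d'/k \rceil + 1)$: the $\lceil d'/k \rceil$ counts the Gaussians ``spent'' in peeling $B$ down to zero dimension via the $k$-dimensional blocks of constraints imposed by each $K'_{k,i}{}^T$, while each additional Gaussian beyond this threshold supplies the promised anti-concentration factor. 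Once this accounting is set up, the rest is a routine coupling between the sparse-Gaussian small-ball estimate and Theorem~\ref{Matrix_anticoncentration}.
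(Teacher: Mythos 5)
Your proposal captures the right skeleton --- induction on $d$, with each additional Gaussian contributing a factor of $\alpha^{7k}$ through the matrix anti-concentration inequality, where the small-ball hypothesis is supplied by $A$-incompressibility --- but it misses the key structural device that makes the argument work, and the specific mechanisms you propose would not close the gap.

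The paper does \emph{not} condition on $g_1,\dots,g_{d-1}$, isolate $g_d$, and run a net argument over $S^{n-1}\cap B$. A net over $S^{n-1}\cap B$ at resolution $\varepsilon^5$ has size of order $\varepsilon^{-5d'} \le \alpha^{-50kd'}$, which overwhelms the per-Gaussian gain of $\alpha^{7k}$ unless $d$ is much larger than $d'/k$; the desired exponent $7k(d-\lceil d'/k\rceil+1)$ shows the gain is only one factor of $\alpha^{7k}$ per extra Gaussian, so no net of this size can be absorbed. What the paper does instead is purely spectral: writing $N_i = U_B^T K'_{d_i,i}$ and stacking $N_1,\dots,N_{\lceil d'/k\rceil}$ into a square $d'\times d'$ matrix $N$, it observes that $E_{d,2}$ forces $\sigma_{d'}(N) < \varepsilon^5$, and hence, after projecting onto $U_i$ (an orthonormal complement of the other blocks' columns), some block matrix $U_i^T N_i$ has a small $d_i$-th singular value. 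Then, conditioned on all Gaussians except $g_i$, the $d_i\times d_i$ matrix $U_i^T N_i$ is a sparse-Gaussian linear combination of deterministic matrices $M_j$, and $E_{d-1,1}$ (with the deleted index) implies exactly the global small-ball bound needed to apply Theorem~\ref{Matrix_anticoncentration} at the matrix level, with no $\varepsilon$-net over vectors $w$ at all. This is what allows the $\lceil d'/k\rceil$ blocks to ``absorb'' the dimension $d'$ of $B$ without any union bound overhead.

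Your handling of case~(2) of $E_{d,1}$ is also not what the paper does and appears not to work. You propose a union bound over the $\binom{d}{n/m-1}$ choices of $I$, which is enormous. The paper instead observes that $E_{d,1}$ for $d$ implies $E_{d-1,1}$ for $d-1$ (deleting $g_i$) essentially for free: case~(1) is monotone, and case~(2) with fewer available indices is implied by case~(2) with more. Case~(2) then enters the argument not through any union bound, but by certifying that the vector $w = U_B U_i x$ (which lies in the joint kernel of the remaining blocks $K'_{\cdot,j}$ for $j\ne i$) is $A$-incompressible, which is precisely what supplies the small-ball hypothesis of Theorem~\ref{Matrix_anticoncentration} in the case $B=\mathbb{R}^n$. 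In short: the high-level accounting in your last paragraph is correct, but the net argument and the union bound over $I$ are both artifacts of an approach the paper deliberately avoids, and neither would yield the stated bound.
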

	\begin{proof}
		We induct on $d$. The base case where $d<\frac{d'}{k}$ is trivial. Now we assume that $d\ge \frac{d'}{k}$ and Lemma \ref{lemma-d} holds for $d-1$.
		
		Let $U_B$ be an $n\times d'$ matrix whose columns are an orthonormal basis for $B$. Let $i$ be any positive integer with $i\le \ceil{\frac{d'}{k}}$. Define $d_i$ as
		$$d_i:=\begin{cases}
			k&\mbox{, if } 1\le i< \ceil{\frac{d'}{k}} \\
			d'-k\ceil{\frac{d'}{k}}+k&\mbox{, otherwise.} 
		\end{cases}$$ Define $N_i$ to be the $d'\times d_i$ matrix $U_B^T K'_{d_i,i}$. Let $N$ be the $d'\times d'$ matrix whose columns are those of $N_1,\ldots, N_{\ceil{\frac{d'}{k}}}$. Let $U_i$ denote a $d'\times d_i$ matrix whose columns forms an orthonormal set orthogonal to all column vectors of $N$ except for those of $N_i$. We may assume that $U_i$ is independent with $g_i$. Let $E_i$ denote the event that $$\sigma_{d_i}\left(U_i^T N_i\right)<\sqrt{n} \varepsilon^5.$$ 
		
		Suppose that $E_{d,2}$ happens. Then there exists $w\in S^{d'-1}$ with $\norm{N^T w}< \varepsilon^5$. Because $\sigma_{d'}(N)=\sigma_{d'}\left(N^T\right)$, there exists $x\in S^{d'-1}$ with $\norm{N x}<\varepsilon^5$. Equivalently, there exists a vector $x_i\in \mathbb{R}^{d_i}$ for each $i$ with $1\le i\le \ceil{\frac{d'}{k}}$ such that $\sum_{i=1}^{\ceil{\frac{d'}{k}}} \norm{x_i}_2^2=1$ and
		$\norm{\sum_{i=1}^{\ceil{\frac{d'}{k}}} N_i x_i}_2< \varepsilon^5,$
		which implies 
		$\norm{U_i^T N_ix_i}_2< \varepsilon^5$
		for each $i$ with $1\le i\le \ceil{\frac{d'}{k}}$.
		Therefore, the event $E_{d,2}$ implies $\bigcup_{i=1}^{\ceil{\frac{d'}{k}}} E_i$.
		
		Suppose that $E_{d,1}\cap E_{d,2}$ happens. Then there exists a positive integer $i$ with $i\le \ceil{\frac{d'}{k}}$ such that $E_{d,1}\cap E_{d,2} \cap E_i$ happens. By deleting all columns depending on $g_i$, the corresponding event $E_{d-1,1}\cap E_{d-1,2}$ (note that this event implicitly depends on $i$ and is independent with $g_i$) happens. Let $g_{*\setminus i}$ denote the collection of all random vectors $g_1,\ldots, g_d$ except for $g_i$. Conditioned on $g_{*\setminus i}$, the $d_i\times d_i$ random matrix $U_i^T N_i$ is a linear combination of $n$ deterministic matrices with coefficients following sparse Gaussian distribution. In other words, we may write $U_i^T N_i$ as $$U_i^T N_i= \sum_{j=1}^n g_{i,j} M_j,$$
		where $g_{i,1},\ldots, g_{i,n}$ are the coordinates of the sparse Gaussian vector $g_i$ and each $M_j$ $(1\le j\le n)$ is independent with $g_i$. The event $E_{d-1,1}$ implies the global small-ball probability bound $$\inf_{x,y}\left|\left\{j\in\left\{1,\ldots, n\right\}: \left|x^* M_j y\right|\ge \frac{\varepsilon}{\sqrt{n}} \right\}\right|>l$$
		where $x$ and $y$ range over $S^{d_i-1}$. By Theorem \ref{Matrix_anticoncentration}, there exists an absolute constant $c_{1.8}>0$ such that the event $E_{d-1,1}$ and
		$$(l+1) h\ge c_{1.8} nd_i\log \frac{n^2}{(l+1) h}$$
		imply
		\begin{align*}
			&\mathbb{P}\left(\sigma_{d_i}\left(U_i^T N_i\right)\le \frac{\varepsilon'^2\varepsilon^2 (l+1) h}{n^{2}  d_i^3 s}\;\middle|\; g_{*\setminus i}\right)\\
			\le&c_{1.8}\left(\varepsilon'+\mathbb{P}\left(\sigma_1\left(U_i^T N_i\right)>s\;\middle|\; g_{*\setminus i}\right)\right)+ \exp\left(-\frac{(l+1)h}{c_{1.8}n}\right)
		\end{align*}
		for each $\varepsilon'\ge 0$ and $s\ge 0$. Take $c_{4.4}=\max\left\{1600 c_{1.8}, 10000\right\}$, $\varepsilon'=\alpha^{8k}$ and $s=\alpha^{-8k}$. Then by
		\begin{align*}
			&(l+1) h\\
			\ge &\frac{c_{4.4} nk\log \frac{1}{\alpha} }{200}\\
			\ge &8 c_{1.8} n k \log \frac{1}{\alpha}\\
			\ge & c_{1.8} n d_i\log \frac{n^2}{(l+1)h},
		\end{align*} 
		the event $E_{d-1,1}$ implies
		\begin{align*}
			&\mathbb{P}\left(E_i\;\middle|\; g_{*\setminus i}\right)\\
			=&\mathbb{P}\left(\sigma_{d_i}\left(U_i^T N_i\right)< \sqrt{n}\varepsilon^5\;\middle|\; g_{*\setminus i} \right)\\
			\le&\mathbb{P}\left(\sigma_{d_i}\left(U_i^T N_i\right)\le \frac{ \alpha^{24k}\varepsilon^2 }{n^5 }\;\middle|\; g_{*\setminus i}\right)\\
			\le&\mathbb{P}\left(\sigma_{d_i}\left(U_i^T N_i\right)\le \frac{\varepsilon'^2\varepsilon^2 (l+1) h}{n^{2}  d_i^3 s}\;\middle|\; g_{*\setminus i}\right)\\
			\le&c_{1.8}\left(\varepsilon'+\mathbb{P}\left(\sigma_1\left(U_i^T N_i\right)>s\;\middle|\; g_{*\setminus i}\right)\right)+ \exp\left(-\frac{(l+1)h}{c_{1.8}n}\right)\\
			\le& n\left(\alpha^{8k} + \mathbb{P}\left(\sigma_1\left( K'_{d_i,i}\right)>\frac{1}{\alpha^{8k}}\right) \right)+\alpha^{8k}\\
			\le& n\left(\alpha^{8k} + \mathbb{P}\left( \norm{g_i}_2>\frac{1}{\sqrt{n}\alpha^{8k}}\right) \right)+\alpha^{8k}\\
			\le& n\left(\alpha^{8k} + \mathbb{P}\left(\norm{g_i}_\infty>\frac{1}{n\alpha^{2k}}\right) \right)+\alpha^{8k}\\
			\le& n\left(\alpha^{8k} + \frac{2 n^2 \alpha^{8k}}{\sqrt{2\pi}} \exp\left(-\frac{1}{2n^2 \alpha^{16k}}\right)\right)+\alpha^{8k}\\
			\le&n^3 \alpha^{8k}.
		\end{align*}
		
		Therefore, by inductive hypothesis, we have 
		\begin{align*}
			 &\mathbb{P}\left(E_{d,1}\cap E_{d,2}\right)\\
			\le&\sum_{i=1}^{\ceil{\frac{d'}{k}}} \mathbb{P}\left(E_{d,1}\cap E_{d,2}\cap E_i\right)\\
			\le&\sum_{i=1}^{\ceil{\frac{d'}{k}}} \mathbb{P}\left(E_{d-1,1}\cap E_{d-1,2}\cap E_i\right)\\
			= &\sum_{i=1}^{\ceil{\frac{d'}{k}}}\mathbb{E}\left(\mathbb{P}\left(E_{d-1,1}\cap E_{d-1,2}\cap E_i\;\middle|\; g_{*\setminus i} \right)\right)\\
			= &\sum_{i=1}^{\ceil{\frac{d'}{k}}}\mathbb{E}\left(\mathbbm{1}_{E_{d,1}\cap E_{d,2}} \cdot \mathbb{P}\left(E_i\;\middle|\; g_{*\setminus i} \right)\right)\\
			\le&n^3 \alpha^{8k}\sum_{i=1}^{\ceil{\frac{d'}{k}}}\mathbb{E}\left(\mathbb{P}\left(\mathbbm{1}_{E_{d,1}\cap E_{d,2}}\right)\right)\\
			=&n^3 \alpha^{8k}\sum_{i=1}^{\ceil{\frac{d'}{k}}}\mathbb{P}\left(E_{d,1}\cap E_{d,2}\right)\\
			\le&n^4 \alpha^{7kd-7k\ceil{\frac{d'}{k}}+8k}\\
			\le&\alpha^{7kd-7k\ceil{\frac{d'}{k}}+7k}.
		\end{align*}
	
	By mathematical induction, Lemma \ref{lemma-d} holds.
	\end{proof}
	
	Instead taking a union bound over all $A$-compressible vectors, we have to group the $A$-compressible vectors into low dimensional linear subspaces to utilize the matrix anti-concentration inequality.
	
	\begin{lemma}\label{lemma-eps-net}
		Suppose that $k\in\left\{1,\ldots, m\right\}$, $l\in \left\{0,1,\ldots, n\right\}$ and $0<\varepsilon\le \alpha^{3k}$. There exists a union $B \subseteq \mathbb{R}^n$ of at most $\binom{n}{l}\varepsilon^{-2k}$ linear subspaces of dimension at most $k+l-1$, such that for every $w \in \CompA{k,l,\varepsilon}$ there exists $w'\in S^{n-1}\cap B$ with $$\norm{w-w'}_2\le \alpha^{-3k}\varepsilon.$$
	\end{lemma}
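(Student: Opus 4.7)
The plan is to cover $\CompA{k,l,\varepsilon}$ by sphere sections of subspaces indexed by (i) an $\varepsilon$-net of the parameter space $S^{k-1}$ and (ii) the $\binom{n}{l}$ possible supports of $y$. First, fix an $\varepsilon$-net $N_k\subset S^{k-1}$ with $|N_k|\le (3/\varepsilon)^k$; since $\varepsilon\le\alpha^{3k}$ and $\alpha\le n^{-\log n}$ we have $\varepsilon\le 1/3$, so $|N_k|\le\varepsilon^{-2k}$. For each $x'\in N_k$ set $L_{x'}:=\sum_{i=1}^{k}x'_i A_i^T$, and for each $S\subseteq\{1,\ldots,n\}$ with $|S|=l$ let $P_{S^c}$ denote coordinate projection onto $\mathbb{R}^{S^c}$. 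Then define $B_{x',S}$ to be the span of the right singular vectors of $P_{S^c}L_{x'}$ corresponding to its $k+l-1$ smallest singular values (kernel directions included), and let $B:=\bigcup_{x',S}B_{x',S}$. This is a union of at most $\binom{n}{l}\varepsilon^{-2k}$ subspaces, each of dimension at most $k+l-1$.

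The key quantitative input is the lower bound $\sigma_{n-k+1}(L_{x'})\ge\alpha^k$ supplied by the hypothesis of Theorem~\ref{main-4} applied to $x'\in S^{k-1}$. Cauchy interlacing for singular values under deletion of $l$ rows gives $\sigma_{n-k-l+1}(P_{S^c}L_{x'})\ge\sigma_{n-k+1}(L_{x'})\ge\alpha^k$. Expanding any $w\in\mathbb{R}^n$ in the right singular basis of $P_{S^c}L_{x'}$ and comparing squared contributions in the two orthogonal pieces then yields
$$\mathrm{dist}(w,B_{x',S})\le \frac{\norm{P_{S^c}L_{x'}w}_2}{\alpha^k}.$$

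To finish, take any $w\in\CompA{k,l,\varepsilon}$: choose $x\in S^{k-1}$ and $y$ with $|\mathrm{Supp}(y)|\le l$ and $\norm{L_xw-y}_2<\varepsilon$, extend $\mathrm{Supp}(y)$ to some $S$ with $|S|=l$, so that $\norm{P_{S^c}L_xw}_2<\varepsilon$. Pick $x'\in N_k$ with $\norm{x-x'}_2\le\varepsilon$; using $\sigma_1(A_i)\le 1$ this forces $\norm{(L_{x'}-L_x)w}_2\le\sqrt{k}\varepsilon$, hence $\norm{P_{S^c}L_{x'}w}_2\le (1+\sqrt{k})\varepsilon$. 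Combined with the previous display, $\mathrm{dist}(w,B_{x',S})\le (1+\sqrt{k})\varepsilon/\alpha^k\le\alpha^{-3k}\varepsilon/2$, the last step because $\alpha^{-2k}\ge n^{2k\log n}$ dwarfs $2(1+\sqrt{k})$. Letting $w''$ be the orthogonal projection of $w$ onto $B_{x',S}$, we have $\norm{w''}_2\ge 1-\norm{w-w''}_2\ge 1/2$, so $w':=w''/\norm{w''}_2\in S^{n-1}\cap B$ is well-defined; the triangle inequality $\norm{w-w'}_2\le 2\norm{w-w''}_2$ completes the required estimate.

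The main obstacle I foresee is the correct construction of $B_{x',S}$: the naive choice $\ker(P_{S^c}L_{x'})$ has the right dimension $\le k+l-1$, but the matrix $P_{S^c}L_{x'}$ can have positive singular values far below $\alpha^k$, so a $w$ with $\norm{P_{S^c}L_{x'}w}_2$ small need not be close to the kernel at all. Enlarging the target subspace to the span of the $k+l-1$ bottom singular directions restores the separation $\alpha^k$ through the interlacing estimate, and this is what makes the above distance bound work.
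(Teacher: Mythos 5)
Your proof is correct and takes essentially the same route as the paper. The paper's subspace is the kernel of a matrix $A_{x,L}=\sum_i x_i A_i U_L U_{x,L}$ built from the top $n-k-l+1$ singular directions of $\sum_i x_i A_i U_L$; this kernel is exactly your span of the bottom $k+l-1$ right singular vectors of $P_{S^c}L_{x'}$, and the singular value lower bound $\alpha^k$ that makes the distance estimate work is obtained the same way (the paper uses the product inequality $\sigma_{n-k-l+1}(\sum x_i A_i U_L)\ge\sigma_{n-k+1}(\sum x_i A_i)\sigma_{n-l}(U_L)$ in place of your row-deletion interlacing, but these are the same fact). Your SVD-expansion argument for $\mathrm{dist}(w,B_{x',S})\le\norm{P_{S^c}L_{x'}w}_2/\alpha^k$ is a slightly cleaner packaging of the paper's explicit computation with the pseudoinverse projection $w''=w-A_{x',L}(A_{x',L}^TA_{x',L})^{-1}A_{x',L}^Tw$, and your closing remark correctly identifies the one non-obvious point: the kernel of $P_{S^c}L_{x'}$ alone is too small a target, and one must fatten it to the bottom $k+l-1$ singular directions to get the $\alpha^k$ separation.
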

	\begin{proof}
		For each $l$-subset $L$ of the standard basis $\{e_1,\ldots,e_n\}$ of $\mathbb{R}^n$, let $U_L$ be the $n\times (n-l)$ matrix where $\{e_1,\ldots,e_n\}\setminus L$ is the set of columns. For each $x=(x_1,\ldots, x_k)\in S^{k-1}$ and $l$-subset $L$ of $\{e_1,\ldots,e_n\}$, let $U_{x,L}$ be an $(n-l)\times (n-k-l+1)$ matrix with orthonormal columns, such that
		$$\sigma_{n-k-l+1}\left(\sum_{i=1}^k x_i A_i U_L U_{x,L}\right)\\
		=\sigma_{n-k-l+1}\left(\sum_{i=1}^k x_i A_i U_L\right).$$
		
		Let $A_{x,L}$ denote the $n\times (n-k-l+1)$ matrix $$\sum_{i=1}^k x_i A_i U_L U_{x,L}.$$
		Then we have
		\begin{align*}
			&\sigma_{n-k-l+1}\left( A_{x,L}\right)\\
			=&\sigma_{n-k-l+1}\left(\sum_{i=1}^k x_i A_i U_L U_{x,L}\right)\\
			=&\sigma_{n-k-l+1}\left(\sum_{i=1}^k x_i A_i U_L\right)\\
			\ge&\sigma_{n-k+1}\left(\sum_{i=1}^k x_i A_i \right)\sigma_{n-l}\left(U_L\right)\\
			\ge& \alpha^{k}.
		\end{align*}
		In particular, we have $\rank\left(A_{x,L}\right)=n-k-l+1$.
		
		Let $N$ be an $\varepsilon$-net over $S^{k-1}$ with size at most $$\left(1+\frac{2}{\varepsilon}\right)^{k}\le  \varepsilon^{-2k}.$$ 
		Then the subset $B\subseteq \mathbb{R}^n$ defined as
		$$B:=\bigcup_{x\in N,L\subseteq \{e_1,\ldots,e_n\}, |L|=l} \left\{w\subseteq\mathbb{R}^n :A_{x,L}^T w=0\right\}$$
		is a union of at most $\binom{n}{l}\varepsilon^{-2k}$ linear subspaces of dimension at most $k+l-1$. 
		
		By the definition of $A$-compressible vectors, for any $w\in\CompA{k,l,\varepsilon}$, there exist $x\in S^{k-1}$ and $y\in \mathbb{R}^n$ with $\left|\mbox{Supp}(y)\right|\le l$ and 
		$$\norm{\sum_{i=1}^k x_i A_i^T w-y}_2<\varepsilon.$$
		Since $\left|\mbox{Supp}(y)\right|\le l$, there exists an $l$-subset $L$ of $\left\{e_1,\ldots, e_n\right\}$ with $U_L^Ty=0$. Since $N$ is an $\varepsilon$-net, there exists $x'\in N$ with $\norm{x-x'}_2\le \varepsilon$. Define the vector $w'' \in\mathbb{R}^n$ by 
		$$w'':=w- A_{x',L}\left(A_{x',L}^TA_{x',L}\right)^{-1} A_{x',L}^Tw.$$
		Then we have
		\begin{align*}
			&\norm{w-w''}_2\\
			=&\norm{A_{x',L}\left(A_{x',L}^TA_{x',L}\right)^{-1} A_{x',L}^Tw}_2\\
			\le&\sigma_1\left(A_{x',L}\left(A_{x',L}^TA_{x',L}\right)^{-1} \right)\norm{A_{x',L}^Tw}_2\\
			\le&\sigma_1\left(A_{x',L}\right)\sigma_{n-k-l+1}\left(A_{x',L}\right)^{-2}\norm{A_{x',L}^Tw}_2\\
			\le&\alpha^{-2k}\sigma_1\left(\sum_{i=1}^k x'_i A_i U_L  U_{x',L}\right)\norm{U_{x',L}^T U_L^T\sum_{i=1}^k x'_i A_i^T w}_2\\
			\le&\alpha^{-2k}\sigma_1\left(\sum_{i=1}^k x'_i A_i \right)\sigma_1\left(U_L\right)\sigma_1\left(U_{x',L}\right)^2\norm{ U_L^T\sum_{i=1}^k x'_i A_i^T w}_2\\
			\le &\alpha^{-2k}\left(\sum_{i=1}^k x'_i\sigma_1\left(A_i\right)\right)\norm{ U_L^T\sum_{i=1}^k x'_i A_i^T w}_2\\
			\le &\sqrt{k}\alpha^{-2k}\norm{ U_L^T\sum_{i=1}^k x'_i A_i^T w}_2\\
			\le &\sqrt{k}\alpha^{-2k}\left(\norm{ U_L^T\sum_{i=1}^k x_i A_i^T w}_2+\norm{ U_L^T\sum_{i=1}^k \left(x_i-x'_i\right) A_i^T w}_2\right)\\
			\le &\sqrt{k}\alpha^{-2k}\left(\norm{ U_L^T\left(\sum_{i=1}^k x_i A_i^T w-y\right)}_2+\sigma_1\left( U_L^T\sum_{i=1}^k \left(x_i-x'_i\right) A_i^T \right)\right)\\
			\le & \sqrt{k}\alpha^{-2k}\left(\sigma_1\left(U_L\right)\norm{\sum_{i=1}^k x_i A_i^T w-y}_2+ \sigma_1\left(U_L\right)\sum_{i=1}^k \left|x_i-x'_i\right|\sigma_1\left(A_i\right)\right)\\
			\le&\sqrt{k}\alpha^{-2k}\left(\varepsilon+\sqrt{k}\varepsilon\right)\\
			\le &2k\alpha^{-2k}\varepsilon.
		\end{align*}
		Define the vector $w'\in\mathbb{R}^n$ as $\frac{1}{\norm{w''}} w''$. Then $w'\in B$, and we have		
		\begin{align*}
			&\norm{w-w'}_2\\
			\le& \norm{w-w''}+\norm{w''-w'}\\
			=& \norm{w-w''} +\left|\norm{w''}_2-1\right|\\
			\le&2\norm{w-w''}_2\\
			\le&4k\alpha^{-2k}\varepsilon\\
			\le&\alpha^{-3k}\varepsilon.
		\end{align*}
		
		Therefore, Lemma \ref{lemma-eps-net} holds.
		
	\end{proof}

	Now we handle the compressible part by induction.
	
	\begin{lemma}\label{lemma-mid}
	There exists an absolute constant $c_{4.6}>0$ such that the following statement holds.
	
	Let $j_1$ and $j_2$ be two non-negative integers with $j_1+j_2=\ceil{\frac{\log m}{\log 10}}$. Let $G_{j_1}$ denote an $n\times \left\lfloor\frac{n-1}{2m}+\frac{j_1 (n-1)\log 10}{2m\log (10m)}\right\rfloor$ sparse Gaussian matrix where each entry is set to $\mathcal{N}(0,1)$ with probability $\frac{h}{n}$, and $0$ otherwise. 
	
	Suppose that $$h\ge c_{4.6} m \log n \log\frac{1}{\alpha}.$$
	Then with probability at least $1-\left(j_1+1\right)\alpha^{10}$, the matrix
	$$K_{j_1}:=\left[
	\begin{array}{c|c|c|c}
		A_1 G_{j_1} & A_2 G_{j_1} & \ldots & A_m G_{j_1}
	\end{array}
	\right]$$
	satisfies 
	$$\norm{K_{j_1}^T w}_2 \ge \varepsilon_{ j_2}^{\frac{7}{10}}$$
	for every vector $w\in \CompA{k_{j_2},l_{j_2},\varepsilon_{j_2}}$.
	\end{lemma}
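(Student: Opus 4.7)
The plan is to induct on $j_1$, allocating the additional columns of $G_{j_1}$ (those present beyond $G_{j_1-1}$) to peel off one layer of $A$-compressibility per induction step.

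For the base case $j_1=0$, one has $j_2=\ceil{\log m/\log 10}$, so $k_{j_2}=1$, $l_{j_2}\le\floor{n/(100m\log n)}$, and $\varepsilon_{j_2}=\alpha^{10}$; the matrix $G_0$ is exactly the sparse Gaussian matrix appearing in Lemma \ref{lemma-base}. Since $\norm{K_0^T w}_2\ge\norm{G_0^T A_1^T w}_2$, Lemma \ref{lemma-base} yields $\norm{K_0^T w}_2\ge\alpha^7=\varepsilon_{j_2}^{7/10}$ outside an event of probability $\alpha^{10}$.

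For the inductive step, I would decompose $G_{j_1}=[G_{j_1-1}\mid H]$ where $H$ contributes $d_H\approx (n-1)\log 10/(2m\log(10m))$ fresh sparse Gaussian columns independent of $G_{j_1-1}$. The block structure gives
\[\norm{K_{j_1}^T w}_2^2=\norm{K_{j_1-1}^T w}_2^2+\sum_{i=1}^m\norm{H^T A_i^T w}_2^2.\]
Partition $\CompA{k_{j_2},l_{j_2},\varepsilon_{j_2}}$ into (A) $\CompA{k_{j_2},l_{j_2},\varepsilon_{j_2}}\cap\CompA{k_{j_2+1},l_{j_2+1},\varepsilon_{j_2+1}}$, handled directly by the inductive hypothesis on $K_{j_1-1}$ (noting $\varepsilon_{j_2+1}^{7/10}=\varepsilon_{j_2}^{7/100}\gg\varepsilon_{j_2}^{7/10}$), and (B) the "gap" $\CompA{k_{j_2},l_{j_2},\varepsilon_{j_2}}\cap\IncompA{k_{j_2+1},l_{j_2+1},\varepsilon_{j_2+1}}$, which is the genuinely new content.

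For part (B), I would apply Lemma \ref{lemma-eps-net} at the coarse level $(k_{j_2},l_{j_2},\varepsilon_{j_2})$ to obtain a cover by at most $\binom{n}{l_{j_2}}\varepsilon_{j_2}^{-2k_{j_2}}$ subspaces $B_t$ of dimension $\le k_{j_2}+l_{j_2}-1$, with approximation error $\le\alpha^{-3k_{j_2}}\varepsilon_{j_2}$. For each subspace $B_t$ satisfying the deterministic condition $S^{n-1}\cap B_t\subseteq\IncompA{k_{j_2+1},l_{j_2+1},\varepsilon_{j_2+1}}$, apply Lemma \ref{lemma-d} with $(k,l,\varepsilon,d)=(k_{j_2+1},l_{j_2+1},\varepsilon_{j_2+1},d_H)$ and the columns of $H$ as the independent sparse Gaussians; the required conditions $\varepsilon_{j_2+1}\le\alpha^{10k_{j_2+1}}$ and $(l_{j_2+1}+1)/k_{j_2+1}\ge n/(200m\log n)$ are built into the definition of the triples $(k_j,l_j,\varepsilon_j)$. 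The sparsity lower bound $h\ge cm\log n\log(1/\alpha)$ ensures that the probability bound $\alpha^{7k_{j_2+1}(d_H-\ceil{d'/k_{j_2+1}}+1)}$ absorbs the cover size, delivering $\sum_i\norm{K'_{k_{j_2+1},i}{}^T w'}_2^2\ge\varepsilon_{j_2+1}^{10}$ for every $w'\in S^{n-1}\cap B_t$, and in particular $\norm{K_{j_1}^T w'}_2\ge\varepsilon_{j_2+1}^5$. A routine polynomial tail bound on $\sigma_1(K_{j_1})$ plus triangle inequality propagates this to $\norm{K_{j_1}^T w}_2\ge\varepsilon_{j_2}^{7/10}$.

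The main obstacle, and the one flagged by the author in the introduction, is the existence of "mixed" subspaces $B_t$ that fail the deterministic condition (1) of $E_{d,1}$ by containing some witness $w^*\in\CompA{k_{j_2+1},l_{j_2+1},\varepsilon_{j_2+1}}$. My resolution is to classify by the approximant itself rather than by the enveloping subspace: whenever $w'\in\CompA{k_{j_2+1},l_{j_2+1},\varepsilon_{j_2+1}}$, invoke the inductive hypothesis on $w'$ and transfer to $w$ via the triangle inequality, using that the tiny approximation error $\alpha^{-3k_{j_2}}\varepsilon_{j_2}$ is negligible next to $\varepsilon_{j_2+1}^{7/10}$; when $w'\in\IncompA{k_{j_2+1},l_{j_2+1},\varepsilon_{j_2+1}}$ yet its $B_t$ is mixed, I would refine the cover inside $B_t$ by a nested application of Lemma \ref{lemma-eps-net} at the finer level and re-apply Lemma \ref{lemma-d} on the resulting smaller subspaces. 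The geometric decay $\varepsilon_{j_2}=\varepsilon_{j_2+1}^{10}$ together with the proportional drops $k_{j_2}\approx 10k_{j_2+1}$ and $l_{j_2}\approx 10l_{j_2+1}$ keep each level's contribution to the failure probability below $\alpha^{10}$, aggregating to the claimed $(j_1+1)\alpha^{10}$ across the $\ceil{\log m/\log 10}+1$ layers.
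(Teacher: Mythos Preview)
Your induction scaffold, base case, and treatment of case (A) are correct and match the paper. The gap is in your resolution of the mixed-subspace problem, case (ii).

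Your suggested fix---refine the cover inside a mixed $B_t$ by a nested application of Lemma~\ref{lemma-eps-net} at the finer level---does not work. Lemma~\ref{lemma-eps-net} produces subspaces covering the \emph{compressible} vectors; it gives you no subspace that contains your incompressible approximant $w'$ while excluding the compressible directions of $B_t$. And Lemma~\ref{lemma-d} needs exactly that: hypothesis (1) of $E_{d,1}$ demands $S^{n-1}\cap B\subseteq\IncompA{k,l,\varepsilon}$ in full, because inside its proof the incompressibility of \emph{every} unit vector in $B$ is what supplies the uniform small-ball bound $\inf_{x,y}\bigl|\{j:|x^*M_jy|\ge\varepsilon/\sqrt n\}\bigr|>l$ required to invoke Theorem~\ref{Matrix_anticoncentration}. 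One compressible direction in $B_t$ kills that infimum for some pair $(x,y)$, and no further deterministic refinement of the cover repairs this, since the obstruction is a property of the whole subspace rather than of individual points.

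The paper's resolution is to stop trying to separate compressible from incompressible deterministically and instead let the \emph{previous} random matrix do the separating. For each cover subspace $L$ in $B_0$, decompose $L=L_1\oplus L_2$ orthogonally with respect to $K_{j_1-1}K_{j_1-1}^T$, where $L_1$ is spanned by the singular directions with $\|K_{j_1-1}^T\cdot\|\ge\varepsilon_{j_2+1}^{7/10}$ and $L_2$ by the rest. On the inductive-hypothesis event, every unit $w\in L_2$ has $\|K_{j_1-1}^Tw\|<\varepsilon_{j_2+1}^{7/10}$ and hence lies in $\IncompA{k_{j_2+1},l_{j_2+1},\varepsilon_{j_2+1}}$; thus hypothesis (1) holds for $L_2$ automatically. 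Since $L_2$ is $G_{j_1-1}$-measurable and therefore independent of the fresh columns $H$, Lemma~\ref{lemma-d} applies to $L_2$ conditionally. For the final estimate one writes any approximant $w'=w_1+w_2$ with $w_i\in L_i$: the old block gives $\|K_{j_1-1}^Tw_1\|\ge\varepsilon_{j_2+1}^{7/10}\|w_1\|$, the new block gives $\sum_i\|K_i'^Tw_2\|^2\ge\varepsilon_{j_2+1}^{10}\|w_2\|^2$, and a $\max$/balance argument (using also $w_1^TK_{j_1-1}K_{j_1-1}^Tw_2=0$) yields $\|K_{j_1}^Tw'\|\ge\varepsilon_{j_2}^{7/10}$. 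This random, $G_{j_1-1}$-measurable decomposition of each $L$ is the idea your proposal is missing.
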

	\begin{proof}
		We induct on $j_1$. 
		
		In the base case where $j_1=0$, we have
		\begin{align*}
			&\CompA{k_{j_2},l_{j_2},\varepsilon_{j_2}}\\
			=&\CompA{1,l_{j_2},\alpha^{10}}\\
			\subseteq& \CompA{1,\floor{\frac{n}{100m \log n}},\alpha^{10}}.
		\end{align*} 
		Hence by Lemma \ref{lemma-base}, with probability at least $1-\alpha^{10}$, we have
	$$\norm{K_0^T w}_2
	\ge\norm{G_0^T A_1^T w}_2
	\ge\alpha^7
	=\varepsilon_{j_2}^{\frac{7}{10}}$$
		for every $w\in \CompA{k_{j_2},l_{j_2},\varepsilon_{j_2}}$. Therefore Lemma \ref{lemma-mid} holds for $j_1=0$.
		
		Now assume that Lemma \ref{lemma-mid} holds for an integer $j_1$ with $0\le j_1< \ceil{\frac{\log m}{\log 10}}$. We consider the $j_1+1$ case. 
		
		Define the integer $d$ as	
		\begin{align*}
			d:=&\left\lfloor\frac{n-1}{2m}+\frac{j_1 (n-1)\log 10}{2m\log (10m)}\right\rfloor-\left\lfloor\frac{n-1}{2m}+\frac{(j_1-1) (n-1)\log 10}{2m\log (10m)}\right\rfloor\\
			\ge& \frac{(n-1)\log 10}{2m\log (10m)}-1\\
			\ge& \frac{n}{m \log n}.
		\end{align*}
		Let $g_1,\ldots, g_{d}$ be independent copies of the $n$-dimensional sparse Gaussian vector $g$ where each entry is set to $\mathcal{N}(0,1)$ with probability $\frac{h}{n}$, and $0$ otherwise. We regroup the columns of $K_{j_1+1}$ into $K'_1,\ldots,K'_d$ and $K_{j_1}$ where $$K_i':=\left[
		\begin{array}{c|c|c|c}
			A_1 g_i & A_2 g_i & \ldots & A_m g_i
		\end{array}
		\right].$$
		
		Since $\varepsilon_{j_2-1}\le \alpha^{10k_{j_2-1}}$, by Lemma \ref{lemma-eps-net}, there exists a union $B_0 \subseteq \mathbb{R}^n$ of at most $\binom{n}{l_{j_2-1}}\varepsilon_{j_2-1}^{-2k_{j_2-1}}$ linear subspaces of dimension at most $k_{j_2-1}+l_{j_2-1}-1$, such that for every $w \in \CompA{k_{j_2-1},l_{j_2-1},\varepsilon_{j_2-1}}$ there exists $w'\in S^{n-1}\cap B_0$ with $$\norm{w-w'}_2\le \alpha^{-3k_{j_2-1}}\varepsilon_{j_2-1}\le \varepsilon_{j_2-1}^{\frac{7}{10}}=\varepsilon_{j_2}^7.$$
		For every linear subspace $L$ appeared in the definition of $B_0$, let $U$ be an $n\times \dim L$ matrix whose columns form an orthogonal basis of $L$, let $L_1$ (resp. $L_2$) be the subspace of $L$ spanned by all left-singular vectors of $U K_{j_1}$ for which the corresponding singular values are at least $\varepsilon_{j_2}^{\frac{7}{10}}$ (resp. less than $\varepsilon_{j_2}^{\frac{7}{10}}$). Then $L=L_1\oplus L_2$ is an orthogonal decomposition of $L$. For each $w_1\in S^{n-1}\cap L_1$ and each $w_2\in S^{n-1}\cap L_2$, we have $$\norm{K_{j_1}^T w_1}_2\ge \varepsilon_{j_2}^{\frac{7}{10}},$$
		$$\norm{K_{j_1}^T w_2}_2< \varepsilon_{j_2}^{\frac{7}{10}},$$
		and
		$$w_1^T K_{j_1}K_{j_1}^T w_2=0.$$
		Let $B_1\subseteq \mathbb{R}^n$ (resp. $B_2\subseteq \mathbb{R}^n$) denote the union of every linear subspace $L_1$ (resp. $L_2$) where $L$ appears in the definition of $B_0$. Note that $B_0$ is deterministic while $B_1$ and $B_2$ are random.
		
		Let $E$ denote the event that $$S^{n-1} \cap B_2\subseteq  \IncompA{k_{j_2}, l_{j_2}, \varepsilon_{j_2}}.$$
		Then $E$ is independent with $K'_1,\ldots, K'_d$. By inductive hypothesis, we have $$\mathbb{P}\left(E\right)\ge 1-\left(j_1+1\right)\alpha^{10}.$$
		Take $c_{4.6}=\max\left\{c_{4.4}, 10000\right\}$, where $c_{4.4}$ is the absolute constant in Lemma \ref{lemma-d}. Because $B_2$ is a union of at most $\binom{n}{l_{j_2-1}}\varepsilon_{j_2-1}^{-2k_{j_2-1}}$ linear subspaces of dimension at most $k_{j_2-1}+l_{j_2-1}-1$, by Lemma \ref{lemma-d}, the event $E$ implies 
		\begin{align*}
			&\mathbb{P}\left(\inf_{w\in S^{n-1} \cap B_2 }\sum_{i=1}^d \norm{K'^T_i w}_2^2<\varepsilon_{ j_2}^{10}\;\middle|\; B_2\right)\\
			\le&\binom{n}{l_{j_2-1}}\varepsilon_{j_2-1}^{-2k_{j_2-1}} \alpha^{7k_{j_2}d-7k_{j_2}\ceil{\frac{k_{j_2-1}+l_{j_2-1}-1}{k_{j_2}}}+ 7k_{j_2}}\\
			\le& n^{l_{j_2-1}}\varepsilon_{j_2}^{-200 k_{j_2}}\alpha^{
		\frac{7k_{j_2} n}{m\log n}-7\left(k_{j_2-1}+l_{j_2-1}\right)	
		}\\
	\le& \varepsilon_{j_2}^{-207 k_{j_2}}\alpha^{
	\frac{7k_{j_2} n}{m\log n}-8l_{j_2-1}	
}\\
		\le& \varepsilon_{j_2}^{-207 k_{j_2}}\alpha^{
		\frac{7k_{j_2} n}{m\log n}-\frac{8n}{10^{j_2+1}\log n}
	 	}		\\
	\le&\varepsilon_{j_2}^{-207 k_{j_2}}\alpha^{
\frac{6k_{j_2} n}{m\log n}}	\\
	\le&\alpha^{
\frac{6k_{j_2} n}{m\log n}-20700 k_{j_2}^2}\\
\le&\alpha^{
	\frac{3k_{j_2} n}{m\log n}}\\
\le&\alpha^{20}.
		\end{align*}
	By a union bound, we have
	\begin{align*}
		&\mathbb{P}\left(\norm{G_{j_1+1}}_\infty> \frac{1}{\alpha}\right)\\
		\le&  \frac{2n^2\alpha}{\sqrt{2\pi}}\exp\left(-\frac{1}{2\alpha^{2}}\right)\\
		\le&\exp\left(-\frac{20}{\alpha}\right)\\
		\le&\alpha^{20}.
	\end{align*}
	By another union bound, with probability at least $1-\left(j_1+2\right)\alpha^{10}$, we have $\norm{G_{j_1+1}}_\infty\le \frac{1}{\alpha}$ and $$\sum_{i=1}^d \norm{K'^T_i w}_2^2\ge\varepsilon_{ j_2}^{10}$$ for any $w\in S^{n-1} \cap B_2$. 
	
	Under the above assumption, we will prove $\norm{K_{j_1+1}^T w}_2 \ge \varepsilon_{ j_2-1}^{\frac{7}{10}}$ for every $w$ in $\CompA{k_{j_2-1},l_{j_2-1},\varepsilon_{j_2-1}}$. Let $w$ be any such vector. By the definition of $B_0$, there exist $w'\in S^{n-1} \cap B_0$ with $\norm{w-w'}_2\le \varepsilon_{j_2}^7$. By orthogonal projection, there exist $w_1\in B_1$ and $w_2\in B_2$ with $w'=w_1+w_2$, $w_1^T w_2=0$ and $w_1^T K_{j_1}K_{j_1}^T w_2=0.$ Then we have
	\begin{align*}
		&\norm{K_{j_1+1}^T w}_2\\
		\ge&\norm{K_{j_1+1}^T w'}_2-\norm{K_{j_1+1}^T \left(w-w'\right)}_2\\
		\ge&\max\left\{\norm{K_{j_1}^T w'}_2, \sqrt{\sum_{i=1}^d\norm{K'^T_i w'}_2^2}\right\}-\sigma_1\left(K_{j_1+1}\right)\norm{w-w'}_2\\
		\ge&\max\left\{\norm{K_{j_1}^T w_1}_2, \sqrt{\sum_{i=1}^d\norm{K'^T_i w_2}_2^2}-\sqrt{\sum_{i=1}^d\norm{K'^T_i w_1}_2^2}\right\}-n\norm{G_{j_1+1}}_\infty \varepsilon_{j_2}^7\\
		\ge&\max\left\{\varepsilon_{j_2}^{\frac{7}{10}}\norm{w_1}_2,\varepsilon_{j_2}^{5 }\norm{w_2}_2-\sigma_1\left(K_{j_1+1}\right)\norm{w_1}_2\right\}-\alpha^{-2}\varepsilon_{j_2}^7\\
		\ge&\max\left\{\varepsilon_{j_2}^{\frac{7}{10}}\norm{w_1}_2, \varepsilon_{j_2}^{5}\norm{w_2}_2-\alpha^{-2}\norm{w_1}_2\right\}-\alpha^{-2}\varepsilon_{j_2}^7\\
		\ge&\frac{1}{2}\alpha^2\varepsilon_{j_2}^{\frac{57}{10}}-\alpha^{-2}\varepsilon_{j_2}^7\\
		\ge&\varepsilon_{j_2}^7\\
		=&\varepsilon_{j_2-1}^\frac{7}{10}.
	\end{align*}
		
	By mathematical induction, Lemma \ref{lemma-mid} holds.
	\end{proof}

	We are ready to prove Theorem \ref{main-4}.
	\begin{proof}[Proof of Theorem \ref{main-4}]
		 Take $c_{1.10}=\max\left\{c_{4.4},c_{4.6}, 10000\right\}$, where $c_{4.4}$ and $c_{4.6}$ are the absolute constants in Lemma \ref{lemma-d} and Lemma \ref{lemma-mid}. For each positive integer $i$ with $i\le\frac{n}{m}$, let $G_{\setminus i}$ be the $n\times \left(\frac{n}{m}-1\right)$ matrix obtained by deleting the $i$-th column from $G$. By Lemma \ref{lemma-mid}, with probability at least $1-n \alpha^{10}$, the matrix
		 $$K_{\setminus i}:=\left[
		 \begin{array}{c|c|c|c}
		 	A_1 G_{\setminus i} & A_2 G_{\setminus i} & \ldots & A_m G_{\setminus i}
		 \end{array}
		 \right]$$
		 satisfies 
		 $$\norm{K_{\setminus i}^T w}_2 \ge \varepsilon_{ 0}^{\frac{7}{10}}$$
		 for every vector $w\in \CompA{k_0,l_0,\varepsilon_0}$. Therefore, by Lemma \ref{lemma-d}, we have
		 \begin{align*}
		 	   &\mathbb{P}\left(\sigma_n(K)\le \alpha^{500m}\right)\\
		 	\le&\mathbb{P}\left(\sigma_n(K)\le \varepsilon_0^5\right)\\
		 	\le&\alpha^{7 k_0} +n^2\alpha^{10}\\
		 	=&\alpha^{7 m} +n^2\alpha^{10}\\
		 	\le&\alpha
		 \end{align*}
	 and Theorem \ref{main-4} holds.
	\end{proof}

	\section*{Acknowledgments}
	I thank Heng Liao and Bai Cheng for supporting me to work on this problem, and Merouane Debbah, Jiaoyang Huang and Richard Peng for helpful discussions. I also thank Ruixiang Zhang for pointing out the reference \cite[Lemma 4.7]{Guo} and Van Vu for suggestions on references.

\end{document}